    \newcommand{\BA}{{\mathbb {A}}} 
    \newcommand{\BC}{{\mathbb {C}}} 
     \newcommand{\BF}{{\mathbb {F}}}
     \newcommand{\BR}{{\mathbb {R}}}
    \newcommand{\CO}{{\mathcal {O}}} \newcommand{\CP}{{\mathcal {P}}}
    \newcommand{\CS}{{\mathcal {S}}} 
    \newcommand{\CW}{{\mathcal {W}}}
    \newcommand{\RU}{{\mathrm {U}}}
     \newcommand{\bx}{{\bf {x}}}
    \newcommand{\lenth}{{\mathrm {\lenth}}}
     \newcommand{\GL}{{\mathrm{GL}}}
    \newcommand{\Hom}{{\mathrm{Hom}}} 
    \newcommand{\Ind}{{\mathrm{Ind}}}
     \newcommand{\Sp}{{\mathrm{Sp}}}
    \newcommand{\cond}{\mathrm{cond}} 
    \renewcommand{\Re}{{\mathrm{Re}}}
 \newcommand{\Vol}{{\mathrm{Vol}}}
 \newcommand{\SL}{{\mathrm{SL}}}
 \newcommand{\SO}{{\mathrm{SO}}}
\newcommand{\vol}{{\mathrm{vol}}}
 \newcommand{\diag}{{\mathrm{diag}}}
    \newcommand{\wpair}[1]{\left\{{#1}\right\}}
     \newcommand{\ra}{\rightarrow}
    \newcommand{\nequiv}{\equiv\hspace{-7.8pt}/}
    \theoremstyle{plain}
    \newtheorem{thm}{Theorem}[section] \newtheorem{cor}[thm]{Corollary}
    \newtheorem{lem}[thm]{Lemma}  \newtheorem{prop}[thm]{Proposition}
    \numberwithin{equation}{section}
\title{A strong multiplicity one theorem for $\SL_2$}
\author{Jingsong Chai and Qing Zhang}
\begin{document}

\begin{abstract}
It is known that multiplicity one property holds for $\SL_2$, while
the strong multiplicity one property fails. However, in this paper,
we show that if we require further that a pair of cuspidal
representations $\pi$ and $\pi'$ of $\SL_2$ have the same local
components at archimedean places and the places above 2, and they are generic with respect to the same
additive character, then they also satisfy the strong multiplicity
one property. The proof is based on a local converse theorem for
$\SL_2$.
\end{abstract}

\maketitle

\textbf{Keywords:} {strong multiplicity one theorem, local converse theorem, Howe vectors} \\

\textbf{Mathematics Subject Classification (2010):} {22E50, \and 11F70} \\


\section{Introduction}

Let $F$ be a number field, and $\BA=\BA_F$ be its ring of adeles.
Let $G$ be a linear reductive algebraic group defined over $F$. The
study of space of automorphic forms $L^2(G(F)\backslash G(\BA))$ has
been a central topic in Langlands program and representation theory.
Let $L^2_0(G(F)\backslash G_(\BA))$ be the subspace of cuspidal
representations.  Suppose $\pi$ is an irreducible automorphic
representation of $G(\BA)$. It is known that $\pi$ occurs discretely
with finite multiplicity $m_\pi$ in $L^2_0(G(F)\backslash G(\BA))$.

The multiplicities $m_\pi$ are important in the study of automorphic
forms and number theory. By the work of Piatetski-Shapiro and
Jacquet and Shalika (\cite{JaSh}) the group $G=\GL_n$ has property
of multiplicity one, that is, $m_\pi\le 1$ for any $\pi$. This is
also true for $\SL_2$ by the famous work of D.Ramakrishnan
(\cite{Ra}). But in general, the multiplicity one property fails,
for example \cite{B,GaGJ,Li,LL} to list a few.

In the case of $\GL_n$, a stronger theorem, called the strong
multiplicity one, holds. It says that two cuspidal representations
$\pi_1,\pi_2$, if they have isomorphic local components almost
everywhere, then they coincide in the space of cusp forms(not only
isomorphic). It follows from the results in \cite{LL} that $\SL_2$
doesn't have this strong multiplicity one property. Multiplicity one
property is already very few, and the strong multiplicity one is
even rare. To the authors' knowledge, the only example other than
$\GL_n$ in this direction is the rigidity theorem for $\SO(2n+1)$,
Theorem 5.3 of \cite{JS}, which is established by D.Jiang and
D.Soudry.

The main purpose of this paper is to prove a weaker version of
strong multiplicity one result for $\Sp_2=\SL_2$. Although we know
the strong multiplicity one doesn't hold in general for a pair of
cuspidal representations $\pi_1,\pi_2$ of $\SL_2(\BA)$, but if we
require that both $\pi_1,\pi_2$ are generic with respect to the same
additive character $\psi$ of $\BA$, then we can show that they also
satisfy the strong multiplicity one property.

The reason for the failure of the strong multiplicity one for
$\SL_2$ is the existence of L-packets. According the local
conjecture of Gan-Gross-Prasad, Conjecture 17.3 of \cite{GaGP},
there is at most one $\psi$-generic representation in each L-packet.
For $\SL_2$, the result is known by the local discussion in
\cite{LL}. In this paper, we prove a local converse theorem for
$\SL_2(F)$ when $F$ is a $p$-adic field such that its residue
characteristic is not 2, which will reprove this result and confirm
a local converse conjecture in \cite{Jng}. This also implies our
version of strong multiplicity one easily.

We now give some details of our results. In
\cite{GPS2}, Gelbart and Piatetski-Shapiro constructed some
Rankin-Selberg integrals to study $L$-functions on the group
$G_n\times \GL(n)$, at least when $G_n$ has split rank $n$. In
particular, in Method C in that paper, if $\pi$ is a globally
generic cuspidal representation of $\Sp_{2n}(\BA)$, $\tau$ is
cuspidal representation of $\GL_n(\BA)$, consider the global Shimura type
zeta-integral
\[
I(s, \phi, E)=\int_{\Sp_{2n}(F)\setminus \Sp_{2n}(\BA)} \phi(g)\theta(g)E(g,s)
\]
where $\phi$ belongs to the space of $\pi$, $E(g,s)$ is a genuine
Eisenstein series on $\widetilde \Sp_{2n}(\BA)$ built from the
representation induced from $\GL_n(\BA)$ by $\tau$ twisted by
$|\det|^s$, and $\theta(g)$ is some theta series on $\widetilde
\Sp_{2n}(\BA)$. Note that the product $\theta(g)E(g,s)$ is
well-defined on $\Sp_{2n}$. The global integral is shown to be
Eulerian. The functional equations and unramified calculations were
also carried out in \cite{GPS2} by Gelbart and Piatetski-Shapiro.
Although we will only consider the easiest case when $n=1$ of
Gelbart and Piatetski-Shapiro's construction, we remark here that
Ginzburg, Rallis and Soudry generalized the above construction to
$\Sp_{2n}\times \GL_k$, for any $k$, in \cite{GiRS}.

We study in more details of Gelbart and Piatetski-Shapiro's local integral
\[
\Psi(W_v, \phi_v, f_{s,v})=\int_{N(F_v)\setminus
\SL_2(F_v)}W_v(h)(\omega_{\psi_v^{-1}}(h)\phi_v)(1)f_{s,v}(h)dh
\]
(for the unexplained notations, see sections below) when $v$ is finite. These local zeta integrals satisfy
certain functional equations, which come from the intertwining
operators on induced representation and certain uniqueness
statements. These functional equations can then be used to define
local gamma factors $\gamma(s,\pi_v,\eta_v,\psi_v)$, where $\pi_v$
is a generic representation of $\SL_2(F_v)$, $\eta_v$ is a character
of $F_v^\times$, and $\psi_v$ is a nontrivial additive character.
The main local result of this paper can be formulated as follows.
\begin{thm}[Theorem $3.10$, local converse theorem and stability of
$\gamma$] Suppose that the residue characteristic of a p-adic field
$F$ is not $2$ and $\psi$ is an unramified character of $F$. Let
$(\pi,V_\pi)$ and $(\pi',V_{\pi'})$ be two $\psi$-generic
representations of $\SL_2(F)$ with the same central character.
\begin{enumerate}
\item If $\gamma(s,\pi,\eta,\psi)=\gamma(s,\pi',\eta,\psi)$ for all quasi-characters $\eta$ of $F^\times$,
then $\pi\cong \pi'$.
\item There is an integer $l=l(\pi,\pi')$ such that if $\eta$ is a quasi-character of $F^\times$ with
conductor $\cond(\eta)>l$, then
$$\gamma(s,\pi,\eta,\psi)=\gamma(s,\pi',\eta,\psi).$$
\end{enumerate}
\end{thm}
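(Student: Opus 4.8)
The plan is to prove both parts by the ``Howe vector'' / partial Bessel function method, adapting to $\SL_2$ the strategy that is by now standard for $\GL_n$ (Baruch, Jacquet--Liu, Chai). The starting point is the local zeta integral $\Psi(W,\phi,f_s)$ and its functional equation, which produces $\gamma(s,\pi,\eta,\psi)$; the gamma factor is defined via the intertwining operator $M(s,\eta)$ between the two induced models $I(s,\eta)$ and $I(-s,\eta^{-1}\cdot(\text{correction}))$ on $\widetilde{\SL_2}$, together with the one-dimensionality of the relevant space of functionals. I would first record the precise functional equation in the form $\Psi(W,\phi,M(s,\eta)f_s) = \gamma(s,\pi,\eta,\psi)\,\Psi(W,\phi,f_s)$ and note that, since both $\pi$ and $\pi'$ have the same central character, the Weil-representation factor $\omega_{\psi^{-1}}$ and the spaces of Schwartz functions $\phi$ are literally the same for the two representations; this is what makes a direct comparison possible.

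For part (1): given that all the gamma factors agree, I would choose a Whittaker function $W\in\CW(\pi,\psi)$ and construct its associated \emph{Howe vectors} $W_m$, i.e. translates that are stabilized under larger and larger congruence subgroups, so that $W_m$ is supported near the identity coset in $N\backslash\SL_2$. The key computation is that for $m$ large the zeta integral $\Psi(W_m,\phi,f_s)$, for a well-chosen section $f_s$ and $\phi$, becomes (up to an explicit nonzero constant) a single value $W_m(a(t_0))$ of the Whittaker function at an appropriate torus element, while the ``other side'' $\Psi(W_m,\phi,M(s,\eta)f_s)$ picks out a different torus value; feeding this into the functional equation and letting $\eta$ run over all quasi-characters, a Mellin-inversion / linear-independence-of-characters argument forces the Whittaker functions of $\pi$ and $\pi'$ (after applying the same Howe-vector construction to a chosen $W'\in\CW(\pi',\psi)$) to agree on the torus, hence on all of $\SL_2$ by the Iwasawa decomposition and the stabilizer properties. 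Since a generic representation is determined by its Whittaker model, this gives $\pi\cong\pi'$. I expect the main obstacle here to be the $\SL_2$-specific bookkeeping: unlike $\GL_2$, the torus is one-dimensional and there is no determinant twist available to separate L-packet members, so one must be careful that the characters $\eta$ of $F^\times$ appearing in the gamma factor really do see enough of the Whittaker function — this is exactly where the local Gan--Gross--Prasad uniqueness of the $\psi$-generic member is reproved, and where the residue-characteristic-$\ne 2$ hypothesis enters (through the structure of $F^\times/(F^\times)^2$ and the behaviour of the Weil index).

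For part (2) (stability): I would compare $\gamma(s,\pi,\eta,\psi)$ and $\gamma(s,\pi',\eta,\psi)$ directly for $\eta$ of large conductor. The standard mechanism is that when $\cond(\eta)$ is large relative to the conductors of $\pi$ and $\pi'$, the local zeta integral can be evaluated explicitly: the highly ramified section $f_{s,\eta}$ forces the integral over $N(F)\backslash\SL_2(F)$ to localize on a small open set on which $W$ is essentially constant and on which the Weil-representation factor $(\omega_{\psi^{-1}}(h)\phi)(1)$ is controlled, so that $\Psi(W,\phi,f_{s,\eta})$ reduces to a Gauss-sum-type integral depending only on $\eta$, $\psi$, and the central character — not on the internal structure of $\pi$. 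Applying the same evaluation on the dual side and dividing, $\gamma(s,\pi,\eta,\psi)$ becomes an explicit expression in $\eta,\psi,s$ and $\omega_\pi=\omega_{\pi'}$ alone, hence equals $\gamma(s,\pi',\eta,\psi)$; one then takes $l=l(\pi,\pi')$ to be the larger of the two conductor thresholds. Here the hard part is the explicit asymptotic evaluation of the zeta integral for ramified $\eta$: one needs a clean ``stable range'' estimate showing the support shrinks and the oscillatory factor from $\omega_{\psi^{-1}}$ is tame, and on $\widetilde{\SL_2}$ this requires understanding the restriction of the Weil representation to small congruence subgroups. I would carry this out by a change of variables reducing to an integral of $\psi$ against $\eta$ over a shell $v(x)=$ const, recognizing it as an abelian Gauss sum whose absolute value and phase are standard. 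I expect part (2) to be more technically delicate than part (1), precisely because of the metaplectic cover and the need to make the localization uniform in $s$.
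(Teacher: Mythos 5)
Your part (1) is essentially the paper's own argument: Howe vectors $v_m$, $v'_m$ for the two representations, evaluation of $\Psi(W_m,\phi^m,f_s^i)$ on the open cell $T\bar N$ to a nonzero constant, evaluation of the dual side on $TwN$, and then Mellin inversion over all $\eta$ to recover $W_{v_k}(t(a)w)=W_{v'_k}(t(a)w)$, which together with the agreement on $B$ (same central character plus the Howe-vector support analysis, where the surjectivity of squaring on $1+\CP^m$ is exactly where residue characteristic $\ne 2$ enters) and Baruch's lemma on $twn$, $n\notin N_m$, gives equality of the Whittaker functions on $B\cup BwB$ and hence $\pi\cong\pi'$. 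Your sketch glosses the two cancellation lemmas (agreement on $B$ and on the $N$-tail of the big cell) but the route is the same.

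Part (2) is where you diverge, and where your proposal has a genuine gap. You propose to prove honest stability of each $\gamma(s,\pi,\eta,\psi)$ separately: that for $\cond(\eta)$ large the zeta integral localizes and evaluates to a Gauss-sum expression depending only on $\eta,\psi,s$ and the central character. That is a substantially stronger statement than the theorem asserts, and its key step --- the asymptotic localization and explicit evaluation of $\Psi(W,\phi,f_{s,\eta})$ and of the dual side on the metaplectic cover --- is only asserted (``I expect\dots''), not carried out; this is precisely the hard analytic content of Cogdell--Piatetski-Shapiro-type stability and nothing in the setup you describe supplies it. The paper avoids this entirely: both parts follow from a single identity obtained by subtracting the two functional equations, namely
\begin{equation*}
q^{-k}\gamma(\psi^{-1})^{-1}\bigl(\gamma(s,\pi,\eta,\psi)-\gamma(s,\pi',\eta,\psi)\bigr)
=\int_{F^\times}\bigl(W_{v_k}(t(a)w)-W_{v_k'}(t(a)w)\bigr)\psi^{-1}(2a)\chi(a)\eta^{-1}(a)|a|^{-s}\,da ,
\end{equation*}
with $k=4L$ fixed independently of $\eta$. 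Here the same-central-character hypothesis is used (via the agreement of the Whittaker functions on $B$ and on $tw(N-N_m)$) to cancel everything except a fixed smooth function of $a$ supported in $\CP^{-6L}$. Part (2) is then immediate: that function, together with $\psi^{-1}(2a)$ and the unramified $\chi$, is invariant under $1+\CP^l$ for some $l=l(\pi,\pi')$, so the right-hand side vanishes as soon as $\cond(\eta)>l$. In other words, the stability you need is a soft consequence of the difference formula you already have from part (1); proving stability representation-by-representation, as you propose, is both unnecessary and, as written, unproved.
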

The proof of this result follows closely to \cite{Ba1,Ba2,Zh}, and
Howe vectors play an important role. With the help of this result,
combining with a nonvanishing result on archimedean local integrals
proved in Lemma 4.9, we follow the argument in Theorem 7.2.13 in
\cite{Ba2}, or Theorem 2 in \cite{Ca}, to prove the main global
result of this paper.
\begin{thm}[Theorem 4.8, Strong Multiplicity One for $\SL_2$]
Let $\pi=\otimes \pi_v$ and $\pi'=\otimes \pi'_v$ be two irreducible
cuspidal automorphic representation of $\SL_2(\BA)$ with the same
central character. Suppose that $\pi$ and $\pi'$ are both
$\psi$-generic. Let $S$ be a finite set of \textbf{finite} places
such that no place in $S$ is above $2$. If $\pi_v\cong \pi'_v$ for
all $v\notin S$, then $\pi =\pi'$.
\end{thm}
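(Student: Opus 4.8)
The plan is to run the standard "converse theorem via Rankin–Selberg" argument, using the global functional equation of the Gelbart–Piatetski-Shapiro integral $I(s,\phi,E)$ to transfer local information at the bad places in $S$ into a global identity, and then invoke the local converse theorem (Theorem 3.10(1)) to kill those places. First I would fix nonzero cusp forms $\phi\in\pi$ and $\phi'\in\pi'$ that are $\psi$-Whittaker-generic, and factor the global zeta integral $I(s,\phi,E)$ through its Euler product $\prod_v \Psi(W_v,\phi_v,f_{s,v})$ for a well-chosen decomposable Eisenstein section $f_s=\otimes f_{s,v}$ built from a Hecke character $\eta=\otimes\eta_v$ of $\BA^\times$. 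The global functional equation for $I(s,\phi,E)$ coming from the functional equation of the Eisenstein series, combined with the local functional equations defining the gamma factors, yields a "crude" global functional equation of the form $L^S$-factor times $\prod_{v\in S\cup S_\infty}\gamma(s,\pi_v,\eta_v,\psi_v)$ equals the analogous object for $\pi'$, where $S_\infty$ is the archimedean set. Since $\pi_v\cong\pi'_v$ for all $v\notin S$, the partial $L$-functions and the unramified local integrals agree outside $S\cup S_\infty$, so after cancellation one is left with
\[
\prod_{v\in S}\gamma(s,\pi_v,\eta_v,\psi_v)\;=\;\prod_{v\in S}\gamma(s,\pi'_v,\eta_v,\psi_v)
\]
for every Hecke character $\eta$; here the archimedean factors cancel because $\pi$ and $\pi'$ are required to have the same archimedean components (this is exactly where the hypothesis on archimedean places, and the nonvanishing of the archimedean local integrals from Lemma 4.9, is used to make sure those places contribute nothing and can be divided out).

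Next I would deduce from this global identity the corresponding \emph{local} identity of gamma factors at each individual $v\in S$. The standard trick, as in Theorem 7.2.13 of \cite{Ba2} or Theorem 2 of \cite{Ca}, is to exploit the freedom in choosing the ramification of $\eta_v$ for $v\in S$ while keeping $\eta_w$ highly ramified (conductor exceeding the bound $l$ from Theorem 3.10(2)) at a single auxiliary place $w\notin S$, $w\nmid 2$: by part (2) of Theorem 3.10 the factor at $w$ then agrees for $\pi_w$ and $\pi'_w$ automatically (stability of $\gamma$), so it too cancels, and one can iterate a place at a time to isolate
\[
\gamma(s,\pi_v,\eta_v,\psi_v)=\gamma(s,\pi'_v,\eta_v,\psi_v)
\qquad\text{for all quasi-characters }\eta_v\text{ of }F_v^\times,
\]
for each fixed $v\in S$. (One also needs to know that the central characters of $\pi_v$ and $\pi'_v$ agree, which follows since the global central characters agree and $\pi_w\cong\pi'_w$ for $w\notin S$.) Then Theorem 3.10(1) — the local converse theorem, applicable precisely because no $v\in S$ lies above $2$ and $\psi_v$ can be taken unramified after adjusting by conjugation — gives $\pi_v\cong\pi'_v$ for every $v\in S$ as well. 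Hence $\pi_v\cong\pi'_v$ for \emph{all} places $v$.

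Finally, from $\pi\cong\pi'$ as abstract representations of $\SL_2(\BA)$ I would upgrade to the equality $\pi=\pi'$ as subspaces of the space of cusp forms. This is where Ramakrishnan's multiplicity one theorem for $\SL_2$ (\cite{Ra}) enters: since $m_\pi\le 1$, the isomorphism class of $\pi$ determines a unique cuspidal subspace, so $\pi\cong\pi'$ forces $\pi=\pi'$. I expect the main obstacle to be the bookkeeping in the "crude functional equation" step: one must verify that the Gelbart–Piatetski-Shapiro global integral is genuinely Eulerian with the expected unramified computation (so that the partial $L$-functions really do cancel), control the poles/nonvanishing of the Eisenstein series so the functional equation can be divided through, and — most delicately — handle the archimedean and dyadic places, for which no local converse theorem is available, purely by the "same local component" hypothesis together with the nonvanishing input of Lemma 4.9 guaranteeing the relevant archimedean integrals are not identically zero. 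Once those analytic points are in place, the reduction to Theorem 3.10 is formal.
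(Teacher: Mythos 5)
Your overall architecture is the same as the paper's: factor the global Gelbart--Piatetski-Shapiro integral into local integrals, use the functional equation of the Eisenstein series together with the local functional equations and the nonvanishing of the archimedean integrals (Lemma 4.9) to obtain $\prod_{v\in S}\gamma(s,\pi_v,\eta_v,\psi_v)=\prod_{v\in S}\gamma(s,\pi'_v,\eta_v,\psi_v)$ for every Hecke character $\eta$, then pass to Theorem 3.10 and finish with Ramakrishnan's multiplicity one theorem. The one step that would fail as written is the isolation of a single place $v_0\in S$. You propose to make $\eta$ highly ramified at an auxiliary place $w\notin S$, $w\nmid 2$, and to invoke stability (Theorem 3.10(2)) there. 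But at $w\notin S$ the hypothesis already gives $\pi_w\cong\pi'_w$, so the gamma factors at $w$ agree for trivial reasons and nothing is gained; meanwhile the factors at the places of $S\setminus\{v_0\}$ --- exactly the places where $\pi_v$ and $\pi'_v$ are \emph{not} known to be isomorphic --- are left uncontrolled, so the product identity over $S$ cannot be split into an identity at $v_0$ alone, and ``iterating a place at a time'' does not get started once $|S|\ge 2$.

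The correct use of stability, and the one the paper makes, is at the remaining places of $S$ itself: by Lemma 12.5 of \cite{JL} one chooses a Hecke character $\eta$ whose component at $v_0$ is an arbitrarily prescribed $\eta_{v_0}$ and whose conductor at every other place of $S$ exceeds the bound $l$ of Theorem 3.10(2); stability then forces $\gamma(s,\pi_v,\eta_v,\psi_v)=\gamma(s,\pi'_v,\eta_v,\psi_v)$ for all $v\in S\setminus\{v_0\}$ even though those local components may differ, and dividing these factors out of the product identity yields $\gamma(s,\pi_{v_0},\eta_{v_0},\psi_{v_0})=\gamma(s,\pi'_{v_0},\eta_{v_0},\psi_{v_0})$ for every $\eta_{v_0}$, after which Theorem 3.10(1) gives $\pi_{v_0}\cong\pi'_{v_0}$. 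With that correction your argument coincides with the paper's proof; the other points you flag (Eulerian property and unramified computation, archimedean nonvanishing via Lemma 4.9, agreement at archimedean and dyadic places coming from the hypothesis, and the final appeal to multiplicity one to upgrade $\pi\cong\pi'$ to $\pi=\pi'$) are handled exactly as you describe.
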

We remark here the restriction on residue characteristic comes from
Lemma 3.3. It is expected that this restriction can be removed.
\\

Besides the above, we also in this paper include a discussion of
relations between globally genericness and locally genericness. An irreducible cuspidal automorphic
representation $(\pi, V_\pi)$ is called globally generic if for some
$\phi\in V_\pi$, the integral
\[
\int_{N(F)\setminus N(\BA)} \phi(ug)\psi^{-1}(u)du\neq 0
\]
for some $g\in \SL_2(\BA)$. The representation $\pi$ is
called locally generic if each of its local component is generic. It is
easy to see that if $\pi$ is globally generic, then $\pi$ is also
locally generic. It is a conjecture that on reductive algebraic
group $G$, the converse is also true. This conjecture is closely
related to Ramanujan conjecture. See \cite{Sh} for more detailed
discussions. In this paper, we confirm this conjecture for $\SL_2$.
\begin{thm}[Theorem 4.3]
Let $\pi=\otimes_v\pi_v$ be an irreducible cuspidal automorphic
representation of $\SL_2(\BA)$ and $\psi=\otimes \psi_v$ be a
nontrivial additive character of $F\setminus \BA$. Then $\pi$ is
$\psi$-generic if and only if each $\pi_v$ is $\psi_v$-generic.
\end{thm}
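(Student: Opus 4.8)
The ``only if'' direction I would handle quickly: if $\pi$ is globally $\psi$-generic, choose $\phi\in V_\pi$ whose Whittaker coefficient $W_\phi(g)=\int_{N(F)\backslash N(\BA)}\phi(ug)\psi^{-1}(u)\,du$ is not identically zero. Since $\SL_2$ is quasi-split, local Whittaker models are unique, so $W_\phi$ factors as a product $\prod_v W_v$ of nonzero local Whittaker functions $W_v$ lying in the $\psi_v$-Whittaker model of $\pi_v$; hence every $\pi_v$ is $\psi_v$-generic.

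For the converse the plan is to pass to $\GL_2$ and exploit the restriction theory of Labesse--Langlands. First, realize $\pi$ as an irreducible constituent of the restriction $\widetilde\pi|_{\SL_2(\BA)}$ of some irreducible cuspidal automorphic representation $\widetilde\pi$ of $\GL_2(\BA)$; such a $\widetilde\pi$ exists by \cite{LL}. Every cuspidal automorphic representation of $\GL_2(\BA)$ is globally $\psi$-generic: a nonzero cusp form $\widetilde\phi$ on $\GL_2$ has a nonvanishing $\psi$-Fourier coefficient along $N$, because its constant term along $N$ vanishes by cuspidality and $N$ is abelian. Since $N\subset\SL_2$, the restriction of $\widetilde\phi$ to $\SL_2(\BA)$ still has a nonzero $\psi$-Fourier coefficient and therefore lies in some irreducible constituent $\pi^{0}=\otimes_v\pi^{0}_v$ of $\widetilde\pi|_{\SL_2(\BA)}$. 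Thus $\pi^{0}$ is globally $\psi$-generic, and by the ``only if'' direction each $\pi^{0}_v$ is $\psi_v$-generic.

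Now both $\pi$ and $\pi^{0}$ are constituents of $\widetilde\pi|_{\SL_2(\BA)}$, so at every place $v$ the local components $\pi_v$ and $\pi^{0}_v$ belong to the same local $L$-packet, the one attached to $\widetilde\pi_v$. By hypothesis $\pi_v$ is $\psi_v$-generic, and we have just shown $\pi^{0}_v$ is $\psi_v$-generic; since a local $L$-packet of $\SL_2$ contains at most one $\psi_v$-generic member (the $\SL_2$ case of Conjecture 17.3 of \cite{GaGP}, known by the local analysis in \cite{LL}), we get $\pi_v\cong\pi^{0}_v$ for every $v$. Hence $\pi\cong\pi^{0}$ as abstract representations, and by multiplicity one for $\SL_2$ (\cite{Ra}) an isomorphism class occurs at most once in the cuspidal spectrum, so $\pi=\pi^{0}$ as subspaces of $L^2_0(\SL_2(F)\backslash\SL_2(\BA))$. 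Therefore $\pi$ is globally $\psi$-generic.

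The one genuinely substantial input is the local uniqueness statement --- that each local $L$-packet of $\SL_2$ contains at most one $\psi_v$-generic constituent, at \emph{every} place, including the archimedean places and those above $2$ --- since it is precisely what forces $\pi_v\cong\pi^{0}_v$; everything else is bookkeeping with the Labesse--Langlands packet correspondence together with the known genericity and multiplicity-one properties of $\GL_2$ and $\SL_2$. I would also note that this global argument, unlike the local converse theorem of Section~3, requires no hypothesis on residue characteristics, because the local uniqueness it invokes holds unconditionally for $\SL_2$.
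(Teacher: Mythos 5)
Your argument is correct in outline, but it is not the paper's route: it is essentially the Labesse--Langlands/Gelbart--Rogawski--Soudry argument that the introduction explicitly acknowledges (``Theorem 1.3 follows from the results of \cite{LL} directly'') and deliberately avoids. You lift $\pi$ to a cuspidal $\widetilde\pi$ on $\GL_2(\BA)$, use the automatic global $\psi$-genericity of $\GL_2$ cusp forms to manufacture a globally $\psi$-generic constituent $\pi^0$ of $\widetilde\pi|_{\SL_2(\BA)}$, and then invoke the packet-theoretic fact that each local packet has at most one $\psi_v$-generic member to force $\pi_v\cong\pi^0_v$ everywhere, finishing with multiplicity one. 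The paper never leaves $\SL_2$: the Fourier expansion of a cusp form on $\SL_2$ (Corollaries 4.1--4.2) shows $\pi$ is $\psi_\kappa$-generic for some $\kappa\in F^\times$; since each $\pi_v$ is then simultaneously $\psi_v$- and $\psi_{\kappa,v}$-generic, the local Proposition 2.1 (proved directly --- via the $\RU(1,1)$ argument of \cite{Zh} non-archimedeanly and exactness of the Whittaker functor archimedeanly, not by citing packet structure) gives $\pi_v\cong\pi_v^\kappa$; the global outer twist $\pi^\kappa$, spanned by $\varphi^\kappa(g)=\varphi(g^\kappa)$, is again cuspidal with $(\pi^\kappa)_v\cong\pi_v^\kappa$, so $\pi\cong\pi^\kappa$, multiplicity one \cite{Ra} upgrades this to equality of spaces, and $\pi^\kappa$ is $\psi_{\kappa^2}$-generic, hence $\psi$-generic. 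What your route buys is brevity, at the cost of taking the full $\GL_2\to\SL_2$ packet machinery (existence of the cuspidal lift $\widetilde\pi$, the identification of the local packet of $\pi_v$ with the constituents of $\widetilde\pi_v|_{\SL_2(F_v)}$, and uniqueness of the generic member at every place, archimedean and residually even places included) as a black box from \cite{LL}/\cite{GaGP}; the paper's stated purpose is to replace exactly that black box by the self-contained local statement Proposition 2.1, in the hope that the argument transfers to groups where packet results are unavailable --- your closing remark that no residue-characteristic hypothesis is needed is equally true of the paper's proof, since Proposition 2.1 is unconditional. One small repair to your write-up: nonvanishing of the $\psi$-Whittaker coefficient of $\widetilde\phi$ somewhere on $\GL_2(\BA)$ does not by itself give nonvanishing somewhere on $\SL_2(\BA)$ (the subset $Z(\BA)\SL_2(\BA)$ is proper in $\GL_2(\BA)$); choose $g_0$ with $W_{\widetilde\phi}(g_0)\neq 0$ and replace $\widetilde\phi$ by $\widetilde\pi(g_0)\widetilde\phi$ before restricting, and likewise note that pinning down the specific character $\psi$ (rather than some nontrivial character) in the $\GL_2$ Fourier expansion uses conjugation by $\diag(\kappa,1)\in\GL_2(F)$ --- both standard and harmless.
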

In \cite{GeRS} Proposition 2.5, Gelbart, Rogawski and Soudry proved
similar results for $\RU(1,1)$ and for endoscopic cuspidal
automorphic representation of $\RU(2,1)$. From the discussions given
in \cite{GeRS}, Theorem 1.3 follows from the results of \cite{LL}
directly. Here, we insist to include this result because we adopt a
local argument (see Proposition 2.1) which is different from that
given in \cite{LL}. Hopefully, this local argument can be extended
to more general groups.

As explained above, there is essentially nothing new in this paper.
All the results and proofs should be known to the experts. Our task
here is simply to try to write down the details and to check
everything works out as expected.

This paper is organized as follows. In section 2, we collect basic
results about the local zeta integrals which will be needed. In
section 3, we study the Howe vectors and use it to prove the local
converse theorem and stability of local gamma factors. In section 4,
we prove the main global results.\\

\noindent \textbf{Acknowledgement:} We would like to thank our advisor Professor Jim Cogdell for his guidance, encouragement and support over the years. We would also like to thank Professor Tonghai Yang for helpful discussions. \\

\noindent \textbf{Notations:} Let $F$ be a field. In $\SL_2(F)$, we consider the following subgroups. Let $B$ be the upper triangular subgroup.
Let $B=TN$ be the Levi decomposition, where $T$ is the torus and $N$ is the upper triangular unipotent.
Denote $$t(a)=\begin{pmatrix}a& \\ & a^{-1} \end{pmatrix}\in T, \textrm{ for }a\in F^\times n(b)=\begin{pmatrix}1& b\\ &1 \end{pmatrix}\in N, \textrm{ for }b\in F.$$
Let $\bar N$ be the lower triangular unipotent and denote $$\bar n(x)=\begin{pmatrix}1&\\ x&1 \end{pmatrix}.$$
Let $w=\begin{pmatrix}&1\\ -1& \end{pmatrix}$.

Denote $St$ the natural inclusion $\SO_3(\BC)\ra \GL_3(\BC)$ and view it as the ``standard" representation of
${}^L\! \SL_2=\SO_3(\BC)$.

\section{The local zeta integral}
\subsection{The genericness of representations of $\SL_2(F)$}
In this section, let $F$ be a local field, and $\psi$ be a
nontrivial additive character of $F$, which is also viewed as a
character of $N(F)$.  For $\kappa\in F^\times$, $g\in \SL_2(F)$, we
define
$$g^\kappa=\begin{pmatrix}\kappa&\\ &1 \end{pmatrix}g \begin{pmatrix} \kappa^{-1}& \\ &1 \end{pmatrix}.$$
Explicitly, $$\begin{pmatrix}x& y\\ z&w \end{pmatrix}^\kappa=
\begin{pmatrix}x& \kappa y \\ \kappa^{-1} z & w \end{pmatrix}.$$
Note that if $\kappa\in F^{\times,2}$, say $\kappa=a^2$, then $g^\kappa=t(a)gt(a)^{-1}$,
i.e., $g\ra g^\kappa$ is an inner automorphism on $\SL_2(F)$.
Let $(\pi,V_\pi)$ be an infinite dimensional irreducible smooth representation of $\SL_2(F)$,
we consider the representation $(\pi^\kappa, V_{\pi^\kappa})$ defined by
$$V_{\pi^\kappa}=V_\pi, \textrm{ and } \pi^\kappa(g)=\pi(g^\kappa).$$

Let $\psi_\kappa$ be the character of $F$ defined by
$\psi_\kappa(b)=\psi(\kappa b)$. If $(\pi,V_\pi)$ is $\psi$-generic
with a nonzero $\psi$ Whittaker functional $\Lambda:V_\pi\to \BC$,
one verifies that
\[
\Lambda(\pi^\kappa(n)v)=\Lambda(\pi(n^\kappa)v)=\psi_\kappa(n)\Lambda(v)
\]
for all $n\in N(F), v\in V_{\pi^\kappa}=V_\pi$. Hence
$(\pi^\kappa,V_{\pi^\kappa})$ is $\psi_\kappa$-generic. \\

\begin{prop}
If $\pi$ is both $\psi$- and $\psi_\kappa$-generic, then $\pi\cong \pi^\kappa$.
\end{prop}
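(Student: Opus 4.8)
The plan is to reduce the statement to the uniqueness of the generic constituent in an $\SL_2(F)$-packet, applied to the character $\psi_\kappa$. We may assume $\kappa\notin F^{\times,2}$, since otherwise $\pi^\kappa\cong\pi$ and the two hypotheses are equivalent, so there is nothing to prove. Recall what is already verified just above the proposition: because $\pi$ is $\psi$-generic, its twist $\pi^\kappa$ is $\psi_\kappa$-generic; and $\pi$ is $\psi_\kappa$-generic by hypothesis. Thus $\pi$ and $\pi^\kappa$ are two $\psi_\kappa$-generic representations of $\SL_2(F)$, and it is enough to check that they occur in a common packet and that such a packet has at most one $\psi_\kappa$-generic member up to isomorphism.

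For the first point I would fix an irreducible admissible representation $\wt\pi$ of $\GL_2(F)$ whose restriction to $\SL_2(F)$ contains $\pi$; such a $\wt\pi$ exists by Clifford theory (extend $\pi$ from $\SL_2(F)$ to its stabiliser in $\GL_2(F)$, a group lying between $\SL_2(F)Z(F)$ and $\GL_2(F)$ with finite abelian quotient over $\SL_2(F)Z(F)$, and induce). Since $g\mapsto g^\kappa$ is conjugation by $\diag(\kappa,1)\in\GL_2(F)$, the subspace $\wt\pi(\diag(\kappa,1))V_\pi$ of $\wt\pi$ is stable under $\SL_2(F)$ and, as an $\SL_2(F)$-representation, is isomorphic to $\pi^{\kappa^{-1}}\cong\pi^\kappa$ (note $\kappa$ and $\kappa^{-1}$ differ by a square). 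Hence both $\pi$ and $\pi^\kappa$ occur as constituents of $\wt\pi|_{\SL_2(F)}$.

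For the second point I would invoke two standard facts: $\wt\pi|_{\SL_2(F)}$ is a finite direct sum of irreducible representations, and $\GL_2(F)$ has uniqueness of Whittaker models, i.e. $\dim\Hom_{N(F)}(\wt\pi,\psi_\kappa)\le1$. Suppose two non-isomorphic constituents of $\wt\pi|_{\SL_2(F)}$ were both $\psi_\kappa$-generic; realize them as distinct summands $V_1,V_2$ in a decomposition and extend a nonzero $\psi_\kappa$-Whittaker functional of each by zero on the complementary summands, which are $N(F)$-stable. This yields two $\psi_\kappa$-Whittaker functionals on $\wt\pi$; by uniqueness for $\GL_2(F)$ they are proportional, which is absurd since one vanishes identically on $V_2$ and the other does not. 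Therefore all $\psi_\kappa$-generic constituents of $\wt\pi|_{\SL_2(F)}$ are mutually isomorphic, and in particular $\pi\cong\pi^\kappa$, as desired.

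I expect the main obstacle to lie not in the Whittaker-uniqueness argument, which is essentially formal, but in the passage through $\GL_2(F)$: namely producing $\wt\pi$ and verifying that $\wt\pi(\diag(\kappa,1))V_\pi$ carries the representation $\pi^\kappa$, so that $\pi$ and $\pi^\kappa$ genuinely share a packet. Alternatively, one can avoid $\GL_2(F)$ entirely and simply cite the result of Labesse and Langlands that an $L$-packet of $\SL_2(F)$ contains at most one $\psi'$-generic representation for each nontrivial additive character $\psi'$; combined with the observations of the first paragraph, this immediately gives $\pi\cong\pi^\kappa$.
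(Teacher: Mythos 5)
Your argument is essentially correct, but it is a genuinely different route from the one taken in the paper. You reduce the statement to the pair $\SL_2(F)\subset\GL_2(F)$: by Clifford theory you place $\pi$ and $\pi^\kappa$ inside the restriction of a single irreducible representation $\wt\pi$ of $\GL_2(F)$ (your computation that $\wt\pi(\diag(\kappa,1))V_\pi$ realizes $\pi^{\kappa^{-1}}\cong\pi^\kappa$ is the right one), and then the extension-by-zero argument together with uniqueness of Whittaker models for $\GL_2(F)$ shows that non-isomorphic constituents of $\wt\pi|_{\SL_2(F)}$ cannot both be $\psi_\kappa$-generic; this is in substance the Labesse--Langlands/Gelbart--Knapp packet argument, and indeed your closing remark that one could simply cite \cite{LL} is acknowledged in the paper's introduction, where the authors note that Theorem 1.3 already follows from \cite{LL}. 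The paper, however, deliberately avoids this detour through $\GL_2$: for non-archimedean $F$ it invokes the self-contained local argument of \cite{Zh} (Theorem 2.12, proved there for $\RU(1,1)$ and identical for $\SL_2$), and for $F=\BR$ it embeds $\pi$ into a principal series $I(\chi)$ and uses exactness of the Whittaker functor \cite{CMH} plus the one-dimensionality of Whittaker functionals on $I(\chi)$ to force the image of $\pi$ under $f\mapsto f^\kappa$ to be $\pi$ itself. What your approach buys is brevity and reliance only on classical $\GL_2$ facts; what the paper's approach buys is independence from the accidental relationship between $\SL_2$ and $\GL_2$ (and from the endoscopic analysis of \cite{LL}), which is exactly why the authors state the hope that their local argument extends to more general groups, where no such ambient group with multiplicity-one Whittaker models is available. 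One caveat on your side: in the archimedean case the ``standard facts'' you quote (finiteness of the restriction, existence of $\wt\pi$, and uniqueness of Whittaker functionals) must be taken in the Casselman--Wallach category with continuous functionals, the same setting the paper is careful to fix before running its own argument; with that understood, your proof goes through.
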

\begin{proof}
If $F$ is non-archimedean, we proved the same result in the
$\RU(1,1)$-case in \cite{Zh}, Theorem 2.12. The proof in the
$\SL_2$-case is the same.

If $F$ is archimedean, the case $F=\BC$ is easy, as every $\kappa$
has a square root in $\BC$. Now consider $F=\BR$, and we will work
with the category of smooth representations of moderate growth of
finite length. The Whittaker functional is an exact functor from
this category to the category of vector spaces by Theorem 8.2 in
\cite{CMH}.

We first consider the case when $\pi=\Ind_B^G(\chi)$ for some
quasi-character $\chi$ of $F^\times$. For $f\in I(\chi)$, consider
the function $f^\kappa$ on $\SL_2(F)$ defined by
$f^\kappa(g)=f(g^{\kappa^{-1}})$. It is clear that $f^\kappa\in
I(\chi)^\kappa$ and the map $f\mapsto f^\kappa$ defines an
isomorphism $I(\chi)\ra I(\chi)^\kappa$.

By results in Chapter 2 of \cite{Vo}, if $\pi$ is not a fully
induced representation, then it can be embedded into a principal
series $I(\chi)$. This $I(\chi)$ has two irreducible infinite
dimensional subrepresentations, and use $\pi'$ to denote the other
one. The quotient of $I(\chi)$ by the sum of $\pi$ and $\pi'$,
denoted by $\pi''$, is finite dimensional, i.e., we have an exact
short sequence
\[
0\to \pi\oplus \pi'\to I(\chi)\to \pi''\to 0
\]

First by Theorem 6.1 in \cite{CMH}, we know that the dimension of
Whittaker functionals on $I(\chi)$ is one dimensional for either
$\psi$ or $\psi_\kappa$. Note that $\pi''$ cannot be generic as it
is finite dimensional. Since the Whittaker functor is exact, it
follows that the dimensional of Whittaker functionals on $\pi\oplus
\pi'$ is also one dimensional for either $\psi$ or $\psi_\kappa$. By
the assumption $\pi$ is both $\psi$ and $\psi_\kappa$ generic, thus
$\pi'$ is neither $\psi$ nor $\psi_\kappa$ generic.

Now since $\pi$ is $\psi$ generic, then $\pi^\kappa$ is
$\psi_\kappa$ generic. Hence the image of $\pi$ under the
isomorphism $f\to f^\kappa$ between $I(\chi)\ra I(\chi)^\kappa$ is
again $\psi_\kappa$ generic, hence it has to be $\psi$ generic and
isomorphic to $\pi$, which finishes the proof.
\end{proof}


\subsection{Weil representations of $\widetilde \SL_2$}
Let $\widetilde \SL_2$ be the metaplectic double cover of $\SL_2$. Then we have an exact sequence
$$0\ra \mu_2\ra \widetilde \SL_2 \ra \SL_2\ra 0,$$
where $\mu_2=\wpair{\pm 1}$.

The product on $\widetilde \SL_2(F)$ is given by
$$(g_1,\zeta_1)(g_2,\zeta_2)=(g_1 g_2, \zeta_1 \zeta_2 c(g_1,g_2)),$$
where $c:\SL_2(F)\times \SL_2(F)\ra \wpair{\pm1}$ is defined by
$$c(g_1,g_2)=(\bx(g_1),\bx(g_2))_F (-\bx(g_1)\bx(g_2), \bx(g_1g_2))_F,$$
where $$\bx\begin{pmatrix} a& b\\ c& d \end{pmatrix}=\left\{\begin{array}{lll} c, & c\ne 0, \\ d, & c=0. \end{array}
\right.$$
For these formulas, see \cite{Sz} for example.

For a subgroup $A$ of $\SL_2(F)$, we denote $\tilde A$ the preimage
of $A$ in $\widetilde \SL_2(F)$, which is a subgroup of $\widetilde
\SL_2(F)$. It is easy to see that $\widetilde N(F)=N(F)\times \mu_2$
and $\widetilde{\bar N}(F)=\bar N(F)\times \mu_2$. For an element
$g\in \SL_2(F)$, we sometimes write $(g,1)\in \widetilde \SL_2(F)$
as $g$ by abuse of notation.

A representation $\pi$ of $\widetilde \SL_2(F)$ is called genuine if
$\pi(\zeta g)=\zeta \pi(g)$ for all $g\in \widetilde \SL_2(F)$ and
$\zeta \in \mu_2$. Let $\psi$ be an additive character of $F$, there
is Weil representation $\omega_{\psi}$ of $\widetilde \SL_2(F)$ on
$\CS(F)$, the Bruhat-Schwartz functions on $F$. For $f\in \CS(F)$,
we have the familiar formulas:
\begin{align*}
\left(\omega_{\psi}\begin{pmatrix}&1\\ -1& \end{pmatrix}\right)f(x)&=\gamma(\psi)\hat f(x),\\
\left(\omega_{\psi}\begin{pmatrix}1& b\\ &1 \end{pmatrix}\right) f(x)&=\psi(bx^2)f(x), b\in F\\
\left(\omega_{\psi}\begin{pmatrix}a& \\ &a^{-1} \end{pmatrix}\right) f(x)&=|a|^{1/2}
\frac{\gamma(\psi)}{\gamma(\psi_a)}f(ax), a\in F^\times,
\end{align*}
and
$$ \omega_{\psi}(\zeta)f(x)=\zeta f(x), \zeta \in \mu_2. $$
Here $\hat f(x)=\int_F f(y)\psi(2xy)dy$, where $dy$ is normalized so
that $(\hat f)^{\hat~}(x)=f(-x)$, $\gamma(\psi)$ is the Weil index
and $\psi_a(x)=\psi(ax)$.

Let $\widetilde T$ be the inverse image of
$T=\wpair{t(a):=\begin{pmatrix} a& \\ & a^{-1}\end{pmatrix}, a\in
F^\times }\subset \SL_2(F)$ in $\widetilde \SL_2(F)$. The product in
$\widetilde T$ is given by the Hilbert symbol, i.e.,
$$(t(a),\zeta_1)(t(b),\zeta_2)=(t(ab), \zeta_1\zeta_2(a,b)_F),$$
where $(a,b)_F$ is the Hilbert symbol.
 The function
$$\mu_\psi(a)=\frac{\gamma(\psi)}{\gamma(\psi_a)}$$
satisfies
$$\mu_{\psi}(a)\mu_\psi(b)=\mu_\psi(ab)(a,b),$$
and thus defines a genuine character of $\widetilde T$.

The representation $\omega_\psi$ is not irreducible, and we have
$\omega_{\psi}=\omega_\psi^+ \oplus \omega_{\psi}^-$, where
$\omega_\psi^+$ (resp. $\omega_\psi^-$) is the subrepresentation on
even (resp. odd) functions in $\CS(F)$. All the above facts can be
found in section 1 in \cite{GPS1} for example.

\subsection{The local zeta integral}

 Let $\mu_\psi(a)=\frac{\gamma(\psi)}{\gamma(\psi_a)}$, which is viewed as a character of $\widetilde T$.
 Let $\eta$ be a character of $F^\times$, and we consider the induced representation
 $I(s,\eta,\psi)=\Ind_{\widetilde B(F)}^{\widetilde \SL_2(F)}(\eta_{s-1/2} \mu_\psi)$.

Let $(\pi,V)$ be a $\psi$-generic representation of $\SL_2(F)$ with
its Whittaker model $\CW(\pi,\psi)$. Choose $W\in \CW(\pi,\psi)$
$\phi\in \CS(F)$ and $f_s\in I(s,\eta,\psi^{-1})$, note that
$(\omega_{\psi^{-1}}(h)\phi)(1)f_s(h)$ is well-defined as a function
on $\SL_2(F)$, and we consider the integral
$$\Psi(W, \phi, f_s)=\int_{N(F)\setminus \SL_2(F)}W(h)(\omega_{\psi^{-1}}(h)\phi)(1)f_s(h)dh.$$
By results in section 5 and section 12 in \cite{GPS2}, the above
integral is absolutely convergent when $Re(s)$ is large enough and
has a meromorphic continuation to the whole plane.

\noindent\textbf{Remark:} In \cite{GPS2}, Gelbart and Piatetski-Shapiro constructed a global
zeta integral for $\Sp_{2n}\times \GL_n$, showed that it is Eulerian and sketched a proof of the
local functional equation. This is the so-called method C in \cite{GPS2}. The above integral
is the easiest case of the Gelbart and Piatetski-Shapiro integral, namely, when $n=1$.

\subsection{local functional equation}
The trilinear form $(W,\phi,f_s)\mapsto \Psi(W,\phi,f_s)$ defines an element in
$$\Hom_{\SL_2}(\pi\times \omega_{\psi^{-1}}\otimes I(s,\eta,\psi^{-1}),\BC),$$
which has dimension at most one. The proof of this fact is given in
\cite{GPS2}(section 11) and also can be deduced by the uniqueness of
Fourier-Jacobi model for $\SL_2$, see \cite{Su}. Let
$$M_s: I(s,\eta,\psi^{-1})\ra I(1-s,\eta^{-1},\psi^{-1})$$
be the standard intertwining operator, i.e., $$M_s(f_s)(g)=\int_N f_s(w ng)dn.$$

By the uniqueness of the above Hom space, we then get the following
\begin{prop}
There is a meromorphic function $\gamma(s,\pi,\eta,\psi)$ such that
$$\Psi(W,\phi, M_s(f_s))=\gamma(s,\pi,\eta,\psi)\Psi(W,\phi,f_s),$$
for all $W\in \CW(\pi,\psi)$, $\phi\in \CS(F)$ and $f_s\in I(s,\eta,\psi^{-1})$.
\end{prop}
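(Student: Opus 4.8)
The plan is to derive the local functional equation from the one-dimensionality of the Hom space
$$\Hom_{\SL_2}(\pi\times \omega_{\psi^{-1}}\otimes I(s,\eta,\psi^{-1}),\BC)$$
together with the compatibility of the zeta integral with the intertwining operator $M_s$. First I would observe that the trilinear form $(W,\phi,f_s)\mapsto \Psi(W,\phi,f_s)$ is, for generic $s$, a nonzero element of the space
$$\Hom_{\SL_2}\bigl(\CW(\pi,\psi)\otimes \omega_{\psi^{-1}}\otimes I(s,\eta,\psi^{-1}),\BC\bigr),$$
which by the cited uniqueness result (\cite{GPS2} section 11, or uniqueness of Fourier--Jacobi models \cite{Su}) is at most one-dimensional. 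The nonvanishing of $\Psi$ for $\Re(s)$ large is part of the convergence statement already recorded above; for other $s$ one uses meromorphic continuation, so that $\Psi(W,\phi,f_s)$ is a nonzero functional for $s$ outside a discrete set.

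Next I would apply the intertwining operator $M_s:I(s,\eta,\psi^{-1})\to I(1-s,\eta^{-1},\psi^{-1})$. Because $M_s$ is an $\SL_2$-morphism of induced representations, the composite $(W,\phi,f_s)\mapsto \Psi(W,\phi,M_s(f_s))$ is again an element of
$$\Hom_{\SL_2}\bigl(\CW(\pi,\psi)\otimes \omega_{\psi^{-1}}\otimes I(s,\eta,\psi^{-1}),\BC\bigr).$$
Indeed, replacing $f_s$ by $\pi$-translates corresponds under $M_s$ to the same translates of $M_s(f_s)$, while the $W$ and $\phi$ arguments transform exactly as in the definition of $\Psi$; hence both trilinear forms lie in the same one-dimensional space. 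By the uniqueness statement there is therefore a scalar, depending only on $s$ (and on $\pi,\eta,\psi$, but not on $W,\phi,f_s$), call it $\gamma(s,\pi,\eta,\psi)$, with
$$\Psi(W,\phi,M_s(f_s))=\gamma(s,\pi,\eta,\psi)\,\Psi(W,\phi,f_s)$$
for all $W,\phi,f_s$. That this factor is a meromorphic function of $s$ follows since both sides are meromorphic in $s$ (the integrals have meromorphic continuation, and $M_s$ depends meromorphically on $s$), and one can choose data making the right-hand side not identically zero; dividing then exhibits $\gamma$ as a ratio of meromorphic functions.

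The main obstacle is the honest verification that $\Psi(W,\phi,M_s(f_s))$ really does transform as an element of the same Hom space — one must check that the $\SL_2$-equivariance properties built into the definition of $\Psi$ (the Whittaker transformation of $W$, the Weil-representation transformation of $\phi$, and the induced-representation transformation of $f_s$) are preserved after pre-composing the $f_s$-slot with $M_s$, using that $M_s$ intertwines $I(s,\eta,\psi^{-1})$ with $I(1-s,\eta^{-1},\psi^{-1})$ and in particular commutes with the $\SL_2$-action. A secondary point requiring care is the genericity of $s$: one needs $\Psi(\cdot,\cdot,f_s)$ to be a nonzero functional so that $\gamma$ is well-defined as a quotient, and then invoke meromorphic continuation to define $\gamma$ on all of $\BC$; this is standard once absolute convergence for $\Re(s)\gg 0$ and meromorphic continuation (already quoted from \cite{GPS2}, sections 5 and 12) are in hand.
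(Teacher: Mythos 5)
Your proposal is correct and is essentially the paper's own argument: the paper likewise notes that the trilinear form lies in $\Hom_{\SL_2}(\pi\times \omega_{\psi^{-1}}\otimes I(s,\eta,\psi^{-1}),\BC)$, which is at most one-dimensional by \cite{GPS2} (section 11) or the uniqueness of Fourier--Jacobi models \cite{Su}, and since $M_s$ is $\SL_2$-equivariant the form $(W,\phi,f_s)\mapsto \Psi(W,\phi,M_s(f_s))$ lands in the same space, forcing proportionality by a meromorphic factor $\gamma(s,\pi,\eta,\psi)$. You simply spell out the equivariance check and the nonvanishing/meromorphy points that the paper leaves implicit.
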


\subsection{Unramified calculation}
The unramified calculation of Method C is in fact not included in \cite{GPS2}, but it is really
easy to do that in our super easy case.

Suppose everything is unramified. Suppose that the residue
characteristic is not 2 so that $\mu_\psi$ is unramified, see
\cite{Sz}. Suppose the representation $(\pi,V)$ has Satake parameter
$a$, which means that $\pi$ is the unramified component
$\Ind_{B(F)}^{\SL_2(F)}(\nu)$ for an unramified character $\nu$ and
$a=\nu(p_F)$, where $p_F$ is some prime element of $F$. Let
$$b_k=\diag(p_F^k, p_F^{-k}),$$
and $W$ be the spherical Whittaker functional normalized by $W(e)=1$. Then $W(b_k)=0$ for $k<0$, and
$$W(b_k)=\frac{q^{-k}}{a-1} (a^{k+1}-a^{-k}),$$
by the general Casselman-Shalika formula.
For $k\ge 0$, we have
$$(\omega_{\psi^{-1}}(b_k)\phi)(1)=\mu_{\psi^{-1}}(p_F^k)|p_F^k|^{1/2},$$
where $\phi$ is the characteristic function of $\CO_F$. On the other hand, let $f_s$ be the standard
spherical section of $I(s,\eta,\psi^{-1})$ normalized by $f_s(1)=1$. Then we have
$$f_s(b_k)=\eta(p_F^k)|p_F^k|^{s+1/2}\mu_{\psi^{-1}}(p_F^k).$$
Since $\mu_{\psi^{-1}}(p_F^k)\mu_{\psi^{-1}}(p_F^k)=(p_F^k, p_F^k)_F=(p_F,-1)_F^k,$
We have
\begin{align*}\Psi(W,\phi,f_s)=& \int_{F^\times}\int_K W(t(a)k)\omega_{\psi^{-1}}(t(a k)\phi)(1)f_s(t(a)k) |a|^{-2}dk da\\
&=\int_{F^\times} W(t(a))\omega_{\psi^{-1}}(t(a))\phi(1)f_s(t(a))|a|^{-2}da\\
&= \sum_{k\ge 0} W(b_k)(\omega_{\psi^{-1}}(b_k)\phi)(1)f_s(b_k) |p_F^k|^{-2}\\
&=\frac{1}{a-1}\sum_{k\ge 0} (a^{k+1}-a^{-k})(p_F,-1)^k\eta(p_F)^k q_F^{-ks}\\
&=\frac{1+c}{(1-ac)(1-a^{-1}c)}=\frac{1-c^2}{(1-c)(1-ac)(1-a^{-1}c)}\\
&=\frac{L(s,\pi,St\otimes \eta \chi)}{L(2s,\eta^2)},
\end{align*}
where $c=(p_F,-1)\eta(p)q_F^{-s},$ and $\chi(a)=(a,-1)_F$. Recall that $St$ is the standard representation of ${}^L\SL_2=\SO_3(\BC)$.

\noindent \textbf{Remark:} Let $M_s: I(s,\eta,\psi^{-1})\ra I(1-s,\eta^{-1},\psi^{-1})$ be the
standard intertwining operator, i.e., $$M_s(f_s)(g)=\int_N f_s(w ng)dn.$$
From the calculation given in \cite{Sz}, it is easy to see that
$$M_s(f_s)=\frac{L(2s-1,\eta^2)}{L(2s,\eta^2)}f_{1-s},$$
where $f_s$ (resp. $f_{1-s}$) is the standard spherical section in $I(s,\eta,\psi)$
(resp. $I(1-s,\eta^{-1},\psi)$). Thus the factor $L(2s,\eta^2)$ appeared in the above
unramified calculation will play the role of the normalizing factor of a global intertwining
operator or Eisenstein series.

\section{Howe vectors and the local converse theorem}

In this section, we assume the residue characteristic of $F$ is not
2. We will follow Baruch's method, \cite{Ba1} and \cite{Ba2}, to
give a proof of local converse theorem for generic representations
of $\SL_2(F)$.

\subsection{Howe vectors} Let $\psi$ be an unramified character.  For a positive integer $m$, let
$K_m=(1+M_{2\times 2}(\CP_F^m))\cap \SL_2(F)$ where $\CP_F$ denotes
the maximal ideal in $\CO_F$. Define a character $\tau_m$ of $K_m$
by
$$\tau_m(k)=\psi(p_F^{-2m}k_{12}).$$
for $k=(k_{ij})\in K_m$. It is easy to see that $\tau_m$ is indeed a
character on $K_m$.

Let $d_m=\diag(p_F^{-m},p_F^{m}).$ Consider the subgroup $J_m=d_m
K_m d_m^{-1}$. Then
$$J_m=\begin{pmatrix}1+\CP_F^m & \CP_F^{-m}\\ \CP^{3m}& 1+\CP_F^m \end{pmatrix}.$$

Define $\psi_m(j)=\tau_m(d_m^{-1}j d_m)$ for $j\in J_m$. For a
subgroup $H\subset \SL_2(F)$, denote $H_m=H\cap J_m$. It is easy to
check that $\psi_m|_{N_m}=\psi|_{N_m}$.

Let $\pi$ be an irreducible smooth $\psi$-generic representation of
$\SL_2(F)$, let $v\in V_\pi$ be a vector such that $W_v(1)=1$. For
$m\ge 1$, as in \cite{Ba1} and \cite{Ba2}, we consider
\begin{equation}{\label{eq32}}v_m=\frac{1}{\Vol(N_{m})}\int_{N_{m}}\psi(u)^{-1}\pi(n)vdn.\end{equation}
Let $L\ge 1$ be an integer such that $v$ is fixed by $K_L$.

\begin{lem}{\label {lem31}} We have
\begin{enumerate}
\item $W_{v_m}(1)=1.$
\item If $m\ge L$, $\pi(j)v_m=\psi_m(j)v_m$ for all $j\in J_m$.
\item If $k\le m$, then
$$v_m=\frac{1}{\Vol(N_{m})}\int_{N_{m}}\psi(u)^{-1}\pi(u)v_kdu.$$
\end{enumerate}
\end{lem}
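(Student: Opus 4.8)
The three statements are standard facts about Howe vectors, following Baruch's method, and I would prove them essentially in the order listed, using the defining formula \eqref{eq32} and basic properties of the groups $N_m$, $K_m$, $J_m$ together with the fact that $\psi_m|_{N_m}=\psi|_{N_m}$. For part (1): since $W_v(1)=1$ and $W_{\pi(n)v}(1)=W_v(n)=\psi(n)$ for $n\in N(F)$ (the Whittaker function transforms by $\psi$ on the left, but evaluated at $1$ after translating on the right it picks up $\psi(n)$), averaging gives $W_{v_m}(1)=\frac{1}{\Vol(N_m)}\int_{N_m}\psi(u)^{-1}\psi(u)\,du=1$. One just has to be careful about left versus right and the sign of the character, but it comes out to $1$ by design.

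For part (3), with $k\le m$: one has $N_k\subset N_m$, and the point is that $v_m$ can be built in two stages. Starting from $v_k=\frac{1}{\Vol(N_k)}\int_{N_k}\psi(u)^{-1}\pi(u)v\,du$, one computes $\frac{1}{\Vol(N_m)}\int_{N_m}\psi(u')^{-1}\pi(u')v_k\,du'$ and uses the change of variables $u'\mapsto u'u^{-1}$ (or Fubini plus the substitution merging the two integrations into a single average over $N_m$, since $N_k N_m = N_m$ as $N_k\subseteq N_m$) to collapse the double integral back to the single average defining $v_m$; the normalization factors $\Vol(N_k)$ cancel because integrating the constant function over $N_k$ contributes exactly $\Vol(N_k)$. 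This is a routine unfolding once one checks that $\psi$ is a genuine character of the abelian group $N(F)$ so the cocycle terms vanish.

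Part (2) is the substantive one and I expect it to be the main obstacle. The claim is that for $m\ge L$ (so that $v$, hence each translate used, interacts well with $K_L\supseteq K_m$), the vector $v_m$ is a $\psi_m$-eigenvector for the whole group $J_m$, not just for $N_m$. The strategy is the usual one: decompose $J_m$ using its subgroups $N_m$, $T_m$, $\bar N_m$ (the intersections of $J_m$ with the upper unipotent, torus, and lower unipotent), show $J_m = \bar N_m T_m N_m$ or a similar product decomposition, and verify the eigenvector property on each factor. On $N_m$ it is immediate from the averaging and $\psi_m|_{N_m}=\psi|_{N_m}$. For $j\in N_m$ of the form used in the average, $\pi(n)v_m=\psi(n)v_m$ directly; the real work is $T_m$ and $\bar N_m$, where one conjugates $J_m$ by $d_m$ to land in $K_m$, uses that $v$ is $K_L$-fixed and $K_m\subseteq K_L$ for $m\ge L$, and tracks how the character $\tau_m$ (hence $\psi_m$) behaves — in particular showing $\psi_m$ is trivial on $T_m$ and $\bar N_m$ so that those factors act trivially on $v_m$. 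The delicate points are: (i) confirming the group-theoretic identity $J_m=\bar N_m T_m N_m$ with the stated congruence conditions $1+\CP_F^m$, $\CP_F^{-m}$, $\CP_F^{3m}$; (ii) checking that conjugation by $d_m=\diag(p_F^{-m},p_F^m)$ carries $J_m$ exactly onto $K_m$ and $\psi_m$ onto $\tau_m$; and (iii) handling the metaplectic/$\SL_2$ subtleties — here there are none beyond $\SL_2$ itself since $J_m\subset\SL_2(F)$, but one must be sure the cocycle $c(\cdot,\cdot)$ is trivial on the relevant compact subgroups (true for $m$ large since everything is close to the identity). Once the decomposition and these identifications are in hand, the eigenvector property follows by writing any $j\in J_m$ as a product and applying the three computations, absorbing the unipotent-upper part into the average and using triviality of $\psi_m$ on the other two parts together with the $K_L$-invariance of $v$.
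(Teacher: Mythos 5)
Parts (1) and (3) of your plan are correct and are the standard unfoldings (the paper itself gives no proof here, deferring to Baruch's $\RU(2,1)$ argument in [Ba2], which is the route you are following). The gap is in part (2), precisely at the step you phrase as ``showing $\psi_m$ is trivial on $T_m$ and $\bar N_m$ so that those factors act trivially on $v_m$.'' That implication is not automatic: $v_m$ is an average over $N_m$ of translates of $v$, and neither $T_m$ nor $\bar N_m$ commutes with $N_m$, so $K_L$-invariance of $v$ together with triviality of $\psi_m$ on those factors does not by itself give $\pi(t)v_m=v_m$ or $\pi(\bar n)v_m=v_m$. The actual content of the lemma is the computation needed to move $t$ or $\bar n$ across the defining integral, and this is exactly where the levels $1+\CP^m$, $\CP^{-m}$, $\CP^{3m}$ and the unramifiedness of $\psi$ do their work. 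For $t=t(a)$ with $a\in 1+\CP^m$: $t\,n(b)\,t^{-1}=n(a^2b)$ preserves $N_m$ and its measure, $\psi(a^2b)=\psi(b)$ because $(a^2-1)b\in\CP^m\cdot\CP^{-m}\subset\CO_F$, and $\pi(t)v=v$ since $t\in K_m\subset K_L$; hence $\pi(t)v_m=v_m$. For $\bar n(x)$ with $x\in\CP^{3m}$, use the identity $\bar n(x)n(b)=n\bigl(b(1+xb)^{-1}\bigr)\,t\bigl((1+xb)^{-1}\bigr)\,\bar n\bigl(x(1+xb)^{-1}\bigr)$: the last two factors lie in $K_L$ and fix $v$, the substitution $b\mapsto b(1+xb)^{-1}$ is a measure-preserving bijection of $\CP^{-m}$, and $\psi\bigl(b(1+xb)^{-1}\bigr)=\psi(b)$ since $b-b(1+xb)^{-1}=xb^2(1+xb)^{-1}\in\CP^m$; hence $\pi(\bar n(x))v_m=v_m$. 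With these two facts, $\psi_m|_{N_m}=\psi|_{N_m}$, and the Iwahori factorization of $J_m$, part (2) follows. (Equivalently and more cleanly: first show $v_m=\Vol(J_m)^{-1}\int_{J_m}\psi_m(j)^{-1}\pi(j)v\,dj$, using the factorization in the order $J_m=N_mT_m\bar N_m$ so that the $T_m\bar N_m$ part hits $v$ and is absorbed by $K_L$-invariance; then (2) is the formal eigenvector property of averaging against the character $\psi_m$.)

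Separately, two of the ``delicate points'' you list are not where any difficulty lies: that conjugation by $d_m$ carries $J_m$ onto $K_m$ and $\psi_m$ onto $\tau_m$ is true by the very definitions of $J_m$ and $\psi_m$, and there is no metaplectic cocycle to check in this lemma at all, since $\pi$ is a representation of the linear group $\SL_2(F)$; the cover enters the paper only through the Weil representation and the induced representations, not through the Howe vectors.
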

The proof of this lemma is the same as the proof in the $\RU(2,1)$ case, which is given in \cite{Ba2}.
\begin{lem}
Let $m\ge L$ and $t=t(a)$ for $a \in F^\times$. Then
\begin{enumerate}
\item if $W_{v_m}(t)\ne 0$, we have
$$a^2\in 1+\CP_F^m;$$
\item if $W_{v_m}(tw)\ne 0$, we have
$$a^2\in \CP^{-3m}.$$
\end{enumerate}
\end{lem}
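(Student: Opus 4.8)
The plan is to exploit the equivariance property $\pi(j)v_m = \psi_m(j)v_m$ from Lemma~\ref{lem31}(2), which forces the Whittaker function $W_{v_m}$ to vanish unless certain arguments lie in prescribed congruence subgroups. Concretely, for $t = t(a)$, I would look at the action of a well-chosen element $j$ of $J_m$ and compare $W_{v_m}(tj)$ computed in two ways: on one hand using the Whittaker equivariance under $N$ on the left (after conjugating $j$ past $t$), and on the other hand using $\pi(j)v_m = \psi_m(j)v_m$ so that $W_{v_m}(tj) = \psi_m(j)W_{v_m}(t)$. If the resulting character values disagree, then $W_{v_m}(t) = 0$.

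For part (1), I would take $j = n(b) \in N_m$, i.e.\ $b \in \CP_F^{-m}$ (recall $N_m = N \cap J_m$ consists of $n(b)$ with $b \in \CP_F^{-m}$, since $J_m$ has upper-right entry in $\CP_F^{-m}$). Then $t(a) n(b) = n(a^2 b) t(a)$, so on the left $W_{v_m}(t(a)n(b)) = \psi(a^2 b) W_{v_m}(t(a))$, while on the right it equals $\psi_m(n(b)) W_{v_m}(t(a)) = \psi(b) W_{v_m}(t(a))$ since $\psi_m|_{N_m} = \psi|_{N_m}$. Hence if $W_{v_m}(t(a)) \ne 0$ we need $\psi((a^2-1)b) = 1$ for all $b \in \CP_F^{-m}$, and since $\psi$ is unramified this gives $a^2 - 1 \in \CP_F^{m}$, i.e.\ $a^2 \in 1 + \CP_F^{m}$.

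For part (2), I would run the same argument with $t(a)w$ in place of $t(a)$, using instead an element of the \emph{lower} unipotent part $\bar N_m = \bar N \cap J_m$, which by the description $J_m = \begin{pmatrix} 1+\CP_F^m & \CP_F^{-m} \\ \CP_F^{3m} & 1+\CP_F^m\end{pmatrix}$ consists of $\bar n(x)$ with $x \in \CP_F^{3m}$. Since $w^{-1}\bar n(x) w = n(-x)$ (up to sign conventions), conjugating $\bar n(x)$ to the left past $t(a)w$ produces $n(\pm a^2 x)$ on the far left, contributing $\psi(\pm a^2 x)$, while the right-hand side contributes $\psi_m(\bar n(x))$. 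Matching characters and letting $x$ range over $\CP_F^{3m}$ forces $a^2 \CP_F^{3m} \subseteq \CP_F^{0}$ (the conductor of $\psi$), i.e.\ $a^2 \in \CP_F^{-3m}$. The main technical point to get right is the precise value of $\psi_m$ on $\bar N_m$ and the sign/normalization in the conjugation formulas $w^{-1}\bar n(x)w$ and $d_m^{-1}\bar n(x) d_m$; once the character $\psi_m$ on these subgroups is computed correctly via $\psi_m(j) = \tau_m(d_m^{-1} j d_m)$, the vanishing statements follow immediately by the same unramifiedness argument. I expect the bookkeeping of these conjugations — rather than any conceptual difficulty — to be the only obstacle.
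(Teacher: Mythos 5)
Your proposal is correct and is essentially the paper's own proof: for (1) it uses $t(a)n(b)=n(a^2b)t(a)$ with $n(b)\in N_m$ together with $\pi(j)v_m=\psi_m(j)v_m$ and unramifiedness of $\psi$, and for (2) the same comparison with $t(a)w\bar n(x)=n(-a^2x)t(a)w$ for $\bar n(x)\in\bar N_m$. The one point you left open, the value of $\psi_m$ on $\bar N_m$, is immediate: $\psi_m(\bar n(x))=\tau_m(d_m^{-1}\bar n(x)d_m)$ depends only on the $(1,2)$-entry, which is $0$, so $\psi_m(\bar n(x))=1$ and the matching of characters gives exactly $\psi(-a^2x)=1$ for all $x\in\CP^{3m}$, hence $a^2\in\CP^{-3m}$ as in the paper.
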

\begin{proof}
(1) Take $x\in \CP^{-m}$, we then have $ n(x)\in N_m\subset J_m$. From the relation
$$ tn(x)=n(a^2x) t,$$
and the above lemma, we have
$$\psi(x)W_{v_m}(t)=\psi(a^2x)W_{v_m}(t).$$
If $W_{v_m}(t)\ne 0$, we get $\psi(x)=\psi(a^2)x$ for all $x\in \CP^{-m}$. Since $\psi$ is unramified,
we get $a^2\in 1+\CP^m$.

(2) For $x\in \CP^{3m},$ we have $\bar n(x)\in \bar N_m$. From the relation $tw\bar n(x)=n(-a^2x) tw$
and the above lemma, we get
$$W_{v_m}(tw)=\psi(-a^2x)W_{v_m}(tw).$$
Thus if $W_{v_m}(tw)\ne 0$, we get
$\psi(-a^2x)=1$ for all $x\in \CP^{3m}$. Thus $a^2\in \CP^{-3m}$.
\end{proof}

\begin{lem}
For $m\ge 1$, consider the square map ${}^ 2: 1+\CP^m\ra 1+\CP^m$, $a\mapsto a^2$ is well-defined and surjective.
\end{lem}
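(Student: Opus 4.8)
The plan is to handle the two assertions separately, and neither requires more than a direct computation. For well-definedness, I would write $a=1+x$ with $x\in\CP^m$; then $a^2=1+2x+x^2$, and since $2x\in\CP^m$ and $x^2\in\CP^{2m}\subseteq\CP^m$ (here $m\ge 1$), we get $a^2\in 1+\CP^m$. This step uses nothing about the residue characteristic.

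For surjectivity, I would fix $b\in 1+\CP^m$ and construct a square root lying in $1+\CP^m$ by Newton-type successive approximation. Take $a_0=1$, so that $a_0^2-b\in\CP^m$. Given $a_k\in 1+\CP^m$ with $z_k:=a_k^2-b\in\CP^{m+k}$, set $a_{k+1}=a_k-z_k/(2a_k)$. This is the one place where the hypothesis that the residue characteristic is not $2$ enters: $2$ is a unit in $\CO_F$ and $a_k$ is a unit (being in $1+\CP^m$), so $z_k/(2a_k)\in\CP^{m+k}$, whence $a_{k+1}\in 1+\CP^m$ and $a_{k+1}\equiv a_k\ (\mathrm{mod}\ \CP^{m+k})$. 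A one-line computation gives $a_{k+1}^2-b=z_k^2/(4a_k^2)\in\CP^{2(m+k)}\subseteq\CP^{m+k+1}$, so the induction continues. The sequence $(a_k)$ is then Cauchy, and by completeness of $F$ it converges to some $a\in 1+\CP^m$ with $a^2=b$.

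Equivalently—this is the same computation repackaged—one may invoke Hensel's lemma for $f(X)=X^2-b\in\CO_F[X]$: since $f(1)=1-b\in\CP^m$ while $f'(1)=2\in\CO_F^\times$, we have $|f(1)|<1=|f'(1)|^2$, so $f$ has a root $a$ with $|a-1|\le|f(1)|$, i.e.\ $a\in 1+\CP^m$. There is no genuine obstacle in this lemma; the only point needing care is to ensure that the iterates (or the Hensel root) land in $1+\CP^m$ and not merely in $\CO_F^\times$, which is precisely what the invertibility of $2$ guarantees. (As an aside, the map is in fact also injective, since the two square roots of $b$ differ by $-1\notin 1+\CP^m$ when $2$ is a unit and $m\ge 1$; this is not needed here.)
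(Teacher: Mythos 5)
Your proof is correct and follows essentially the same route as the paper: well-definedness via the expansion $(1+x)^2=1+2x+x^2$, and surjectivity via Newton's/Hensel's lemma applied to $f(X)=X^2-b$, using that $2$ is a unit so that $|f(1)|<|f'(1)|^2$ and the root satisfies $|a-1|\le |f(1)|$, hence lies in $1+\CP^m$. Your explicit Newton iteration is just an unpacking of the same argument, so there is nothing further to reconcile.
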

This lemma requires the residue field of $F$ is not of characteristic 2 which we assumed throughout this section.
\begin{proof}
For $x\in \CP^m$, it is clear that $(1+x)^2=1+2x+x^2\in 1+\CP^m$.
Thus the square map is well-defined. On the other hand, we take
$u\in 1+\CP^m$ and consider the equation $f(X):=X^2-u=0$. We have
$f'(X)=2X$. Since $q^{-m}=|1-u|=|f(1)|<|f'(1)|^2=|2|^2=1$, by
Newton's Lemma, Proposition 2, Chapter II of \cite{Lg}, there is root $a\in \CO_F$ of $f(X)$ such that
$$|a-1|\le \frac{|f(1)|}{|f'(1)|^2}=|1-u|=q^{-m}.$$
Thus we get a root $a\in 1+\CP^m$ of $f(X)$. This completes the proof.
\end{proof}

Let $Z=\wpair{\pm 1}$, and identify $Z$ with the center of
$\SL_2(F)$. Use $\omega_\pi$ to denote the central character of
$\pi$.
\begin{cor}
Let $m\ge L$, then we have
$$W_{v_m}(t(a))=\left\{\begin{array}{lll} \omega_\pi(z), & \textrm{ if } a=za',
\textrm{ for some }z\in Z, a'\in 1+\CP^m,\\ 0, & \textrm{ otherwise. } \end{array}\right.$$
\end{cor}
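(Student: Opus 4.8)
The plan is to combine Lemma 3.2 (the two divisibility conditions on $a^2$ coming from $W_{v_m}(t(a))\neq 0$ and $W_{v_m}(t(a)w)\neq 0$), Lemma 3.3 (surjectivity of squaring on $1+\CP^m$), and the equivariance property in Lemma 3.1(2). First I would observe that by Lemma 3.2(1), if $W_{v_m}(t(a))\neq 0$ then $a^2\in 1+\CP_F^m$; since the residue characteristic is not $2$, the units $u$ of $\CO_F^\times$ with $u^2\in 1+\CP_F^m$ are exactly $\pm(1+\CP_F^m)$ (this uses that $\CO_F^\times/(1+\CP_F^m)$ has no $2$-torsion beyond $\pm 1$, a consequence of the residue field having odd characteristic). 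Hence $a\in Z\cdot(1+\CP_F^m)$, which already shows $W_{v_m}(t(a))=0$ unless $a=za'$ with $z\in Z$, $a'\in 1+\CP_F^m$; this is the "otherwise" case.

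Next, for the nonzero case, write $a=za'$ with $z=\pm1$ and $a'\in 1+\CP_F^m$. By Lemma 3.3 the squaring map on $1+\CP_F^m$ is surjective, so we may write $a'=b^2$ for some $b\in 1+\CP_F^m$. Then $t(a')=t(b)^2 = t(b)t(b)$, and since $m\ge L$ and $t(b)\in T_m\subset J_m$ (note $b\in 1+\CP_F^m$ forces $t(b)$ to lie in $J_m=d_mK_md_m^{-1}$, as its off-diagonal entries vanish and its diagonal entries lie in $1+\CP_F^m$), Lemma 3.1(2) gives $\pi(t(b))v_m=\psi_m(t(b))v_m$. But $t(b)$ is diagonal, so $\psi_m(t(b))=1$ (the character $\psi_m$ is nontrivial only on the upper-triangular unipotent part). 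Therefore $\pi(t(a'))v_m=v_m$, and consequently $W_{v_m}(t(a'))=W_{v_m}(1)=1$ by Lemma 3.1(1). Finally, $t(za')=z\cdot t(a')$ where $z$ is the central element, so $W_{v_m}(t(a))=W_{v_m}(z\,t(a'))=\omega_\pi(z)W_{v_m}(t(a'))=\omega_\pi(z)$.

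The main obstacle I expect is the clean bookkeeping around the claim that $a^2\in 1+\CP_F^m$ implies $a\in \pm(1+\CP_F^m)$: one must rule out other square roots of $1$ modulo $\CP_F^m$, which is where odd residue characteristic enters crucially, and one must be careful that $a$ is a priori only in $F^\times$, not in $\CO_F^\times$ — but since $1+\CP_F^m\subset\CO_F^\times$ and $a^2\in 1+\CP_F^m$ forces $|a|=1$, this is automatic. The other point needing a line of care is verifying $t(b)\in J_m$ so that Lemma 3.1(2) applies; this is immediate from the explicit description $J_m=\begin{pmatrix}1+\CP_F^m & \CP_F^{-m}\\ \CP_F^{3m}& 1+\CP_F^m\end{pmatrix}$ given just before Lemma 3.1, since the diagonal matrix $t(b)$ with $b\in 1+\CP_F^m$ has $b^{-1}\in 1+\CP_F^m$ as well. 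Everything else is a direct substitution using the relations already recorded.
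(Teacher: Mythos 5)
Your proof is correct and follows essentially the same route as the paper: Lemma 3.2 forces $a^2\in 1+\CP^m$, the odd residue characteristic is used to conclude $a\in Z\cdot(1+\CP^m)$, and Lemma 3.1 together with the central character evaluates $W_{v_m}(t(a))$ as $\omega_\pi(z)$. The only cosmetic differences are that the paper gets $a=za'$ by invoking Lemma 3.3 to produce $a'\in 1+\CP^m$ with $(a')^2=a^2$ (so $a/a'=\pm 1$), rather than your $2$-torsion argument in $\CO_F^\times/(1+\CP^m)$, and your detour writing $a'=b^2$ is unnecessary, since $t(a')$ already lies in $J_m$ and Lemma 3.1(2) applies to it directly.
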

\begin{proof}
Suppose that $W_{v_m}(t(a))\ne 0$, then by Lemma 3.2, we have
$a^2\in 1+\CP^m$. By Lemma 3.3, there exists an $a'\in 1+\CP^m$ such
that $a^2=(a')^2$. Thus $a=za'$ for some $z\in Z$. Since $a'\in
1+\CP^m$, we get $t(a')\in J_m$. The assertion follows from Lemma
3.1.
\end{proof}

From now on, we fix two $\psi$-generic representations $(\pi,
V_{\pi})$ and $(\pi', V_{\pi'})$ with the same central characters.
Fix $v,v'$ such that $W_{v}(1)=1=W_{v'}(1)$. Let $L$ be an integer
such that both $v$ and $v'$ are fixed by $K_L$. For $m\ge 1$, we
consider the Howe vectors $v_m$ and $v'_m$.

By Corollary 3.4 and the fact that $\omega_{\pi}=\omega_{\pi'}$, we
get
\begin{cor}{\label{cor35}}
For $m\ge L$, we have $W_{v_m}(g)=W_{v_m'}(g)$ for all $g\in B$.
\end{cor}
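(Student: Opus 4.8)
The plan is to reduce the asserted identity on all of $B$ to an identity on the torus $T$, and then to quote Corollary~3.4 for $\pi$ and for $\pi'$ separately. First I would use that $W_{v_m}\in\CW(\pi,\psi)$ and $W_{v_m'}\in\CW(\pi',\psi)$, so that each of them transforms on the left under $N(F)$ by the character $\psi$. Writing an arbitrary $g\in B$ in the form $g=n(b)\,t(a)$ with $b\in F$ and $a\in F^\times$, this gives $W_{v_m}(g)=\psi(b)\,W_{v_m}(t(a))$ and, in the same way, $W_{v_m'}(g)=\psi(b)\,W_{v_m'}(t(a))$. Hence it suffices to prove that $W_{v_m}(t(a))=W_{v_m'}(t(a))$ for every $a\in F^\times$.

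For this I would simply invoke Corollary~3.4 twice. Since $m\ge L$ and $v,v'$ were chosen with $W_{v}(1)=W_{v'}(1)=1$ and both fixed by $K_L$, the corollary applies to $v_m$ and to $v_m'$ and tells us that each of $W_{v_m}(t(a))$, $W_{v_m'}(t(a))$ equals $\omega_\pi(z)$, resp. $\omega_{\pi'}(z)$, when $a=za'$ for some $z\in Z=\{\pm1\}$ and $a'\in 1+\CP^m$, and vanishes otherwise. By the standing hypothesis that $\pi$ and $\pi'$ have the same central character, $\omega_\pi=\omega_{\pi'}$, so the two expressions agree in every case. Combining with the reduction of the first paragraph yields $W_{v_m}(g)=W_{v_m'}(g)$ for all $g\in B$.

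I do not expect a genuine obstacle here: the substance has already been absorbed into Corollary~3.4, and what remains is the elementary reduction to torus elements via the left $N$-equivariance of Whittaker functions, together with the equality of central characters. The only point to keep track of carefully is the bookkeeping in the Bruhat-type decomposition $B=NT$ (versus $TN$): since $t(a)n(b)t(a)^{-1}=n(a^2b)$, passing between the two orderings only rescales the unipotent coordinate and multiplies the $\psi$-factor by $\psi$ of an element of $F$, which does not affect the argument because the same factor appears for $v_m$ and $v_m'$.
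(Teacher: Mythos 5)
Your proposal is correct and matches the paper's argument: the paper's proof of this corollary is exactly the one-line observation that it follows from Corollary~3.4 together with $\omega_{\pi}=\omega_{\pi'}$, with the reduction from $B$ to $T$ via left $N(F)$-equivariance of Whittaker functions left implicit. Your write-up just makes that reduction explicit, which is fine.
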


\begin{lem}[Baruch]
If $m\ge 4L$ and $n\in N-N_m$, we have
$$W_{v_m}(tw n)=W_{v_m'}(twn),$$
for all $t\in T$.
\end{lem}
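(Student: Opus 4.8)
Following Baruch's method, the plan is to reduce the asserted identity to the agreement of $W_{v_{m}}$ and $W_{v'_{m}}$ on the Borel subgroup $B$ (Corollary~\ref{cor35}), by using the averaging description of the Howe vectors in Lemma~\ref{lem31}(3) to descend from level $m$ to level $L$.

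First I would apply the Bruhat-type decomposition in $\SL_{2}$. Writing $n=n(b)$ and $c=b^{-1}$, one checks directly that $wn(b)=n(-c)\,t(-c)\,\bar n(c)$, so for $t=t(a)$,
$$t\,w\,n(b)=n(-a^{2}c)\,t(-ac)\,\bar n(c).$$
Since $n\in N-N_{m}$ forces $\val(b)\le -m-1$, we get $\val(c)\ge m+1$, and in particular $\val(c)\ge 3L$ because $m\ge 4L$. Using the left $(N,\psi)$-equivariance of Whittaker functions, $W_{v_{m}}(twn(b))=\psi(-a^{2}c)\,W_{v_{m}}(t(-ac)\bar n(c))$ and likewise for $v'_{m}$; hence it suffices to prove
$$W_{v_{m}}\bigl(t(\alpha)\bar n(c)\bigr)=W_{v'_{m}}\bigl(t(\alpha)\bar n(c)\bigr)\qquad\text{for all }\alpha\in F^{\times}\text{ and all }c\text{ with }\val(c)\ge 3L.$$

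The heart of the argument is the descent. By Lemma~\ref{lem31}(3) with $k=L$ we have $v_{m}=\Vol(N_{m})^{-1}\int_{N_{m}}\psi(u)^{-1}\pi(u)v_{L}\,du$, so $W_{v_{m}}(t(\alpha)\bar n(c))$ is the corresponding average of $W_{v_{L}}(t(\alpha)\bar n(c)\,n(x))$ over $x$ with $\val(x)\ge -m$, and it is enough to treat each such $x$. For these $x$ one has $\val(cx)\ge 1$, so $u':=1+cx\in\CO_{F}^{\times}$, and the big-cell identity $\bar n(c)n(x)=n(x/u')\,t(1/u')\,\bar n(c/u')$ gives
$$t(\alpha)\bar n(c)n(x)=n(\alpha^{2}x/u')\,t(\alpha/u')\,\bar n(c/u').$$
Now $\val(c/u')=\val(c)\ge 3L$, so $\bar n(c/u')\in\bar N_{L}\subset J_{L}$, where $\psi_{L}$ is trivial; by Lemma~\ref{lem31}(2) this gives $\pi(\bar n(c/u'))v_{L}=v_{L}$, whence
$$W_{v_{L}}\bigl(t(\alpha)\bar n(c)n(x)\bigr)=\psi(\alpha^{2}x/u')\,W_{v_{L}}\bigl(t(\alpha/u')\bigr).$$
The right-hand side depends on $v_{L}$ only through $W_{v_{L}}|_{B}$, and $W_{v_{L}}|_{B}=W_{v'_{L}}|_{B}$ by Corollary~\ref{cor35}; running the same computation for $v'_{L}$ then gives $W_{v_{L}}(t(\alpha)\bar n(c)n(x))=W_{v'_{L}}(t(\alpha)\bar n(c)n(x))$, and averaging over $x\in N_{m}$ finishes the proof.

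\textbf{Main obstacle.} The delicate point is that when $m+1\le\val(b^{-1})<3m$ the element $\bar n(b^{-1})$ (equivalently $\bar n(c)$) does \emph{not} lie in $J_{m}$, so one cannot exploit the $J_{m}$-eigenproperty of $v_{m}$ directly on $\bar n(c)$. Descending to level $L$ is precisely the device that resolves this, since membership in $\bar N_{L}$ only requires valuation $\ge 3L$; this is where the hypothesis $m\ge 4L$ is used (one really only needs $m+1\ge 3L$, and $4L$ leaves room to spare). Apart from this, the proof is elementary $\SL_{2}$ matrix algebra together with the properties of $\psi_{m}$ and the Howe vectors already recorded.
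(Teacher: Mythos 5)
Your proof is correct and is essentially the argument the paper itself invokes by citation (it refers the reader to Proposition 3.4 of \cite{Zh} and Lemma 6.2.2 of \cite{Ba1}): Bruhat-decompose $twn(b)$ as $n(-a^2c)t(-ac)\bar n(c)$ with $c=b^{-1}$, descend from $v_m$ to $v_L$ via Lemma 3.1(3), absorb $\bar n(c/u')\in\bar N_L$ using the $J_L$-eigenproperty from Lemma 3.1(2), and conclude with Corollary 3.5. One cosmetic slip: your displayed reduction claims it suffices to treat all $c$ with $\val(c)\ge 3L$, but the later step $\val(cx)\ge 1$ uses the bound $\val(c)\ge m+1$ (which is what actually holds for $n\notin N_m$), so the reduction should be stated with that bound.
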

\begin{proof}
It's completely the same as Proposition 3.4 in \cite{Zh}, which is also a special case of Lemma 6.2.2 of \cite{Ba1}. Similar type result for $\RU(2,1)$ can be found in \cite{Ba2}. We just remark that the proof of this lemma depends on Corollary \ref{cor35}, and hence require the residue characteristic of $F$ is not 2.
\end{proof}
\subsection{Induced representations}
Note that $\bar N(F)$ and $N(F)$ splits in $\widetilde \SL_2(F)$.
Moreover, for $g_1\in N$ and $g\in \bar N$ we have $c(g_1,g_2)=1$.
In fact, if $g_1=n(y)$ and $g_2=\bar n(x)$ with $x\ne 0$, we have
$\bx(g_1)=1$ and $\bx(g_2)=x$, and thus
$$c(g_1,g_2)=( 1,x)_F(-x, x)_F=1.$$
This shows that $N(F) \cdot \bar N(F)\subset \SL_2(F),$ where
$\SL_2(F)$ denotes the subset of $\widetilde \SL_2(F)$ which
consists elements of the form $(g,1)$ for $g\in \SL_2(F)$.

Let $X$ be an open compact subgroup of $N(F)$. For $x\in X$ and
$i>0$, we consider the set $A(x,i)=\wpair{\bar n\in \bar N(F): \bar
n x\in B \cdot \bar N_{i}}$.
\begin{lem}
\begin{enumerate}
\item For any positive integer $c$, there exists an integer $i_1=i_1(X,c)$ such that for all
$i\ge i_1$, $x\in X$ and $ \bar n\in A(x,i)$, we have
$$\bar n x=n t(a) \bar n_0$$
with $n\in N, \bar n_0 \in \bar N_{i}$ and $a\in 1+\CP^c$.
\item There exists an integer $i_0=i_0(X)$ such that for all $i\ge i_0$, we have $A(x,i)=\bar N_{i}$
for all $i\ge i_1$.
\end{enumerate}
\end{lem}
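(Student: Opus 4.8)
The plan is to reduce the whole statement to an explicit $2\times 2$ matrix computation together with bookkeeping of valuations. Since $N(F)\cong(F,+)$, fix an integer $k_0$ with $X=\{n(y):y\in\CP^{k_0}\}$; from the formula for $J_m$ recalled above, $\bar N_i=\{\bar n(x):x\in\CP^{3i}\}$. Write $q$ for the residue cardinality, so that every nonzero $y\in\CP^{k_0}$ satisfies $|y|^{-1}\ge q^{k_0}$.

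First I would pin down the big-cell decomposition. The product $\bar n(u)\,n(y)$ is the matrix $\left(\begin{smallmatrix}1&y\\ u&uy+1\end{smallmatrix}\right)$, and since $B\cap\bar N=\{1\}$, an element $g\in\SL_2(F)$ lies in $B\bar N$ exactly when $g_{22}\ne 0$, with a unique such factorization whose $\bar N$-part is $\bar n(g_{21}/g_{22})$ and whose torus part is $t(g_{22}^{-1})$. Hence $\bar n(u)\in A(x,i)$ (for $x=n(y)$) if and only if $uy+1\ne 0$ and $u/(uy+1)\in\CP^{3i}$, and in that case
\[
\bar n(u)\,n(y)=n\!\left(\frac{y}{uy+1}\right)t\!\left((uy+1)^{-1}\right)\bar n\!\left(\frac{u}{uy+1}\right),
\]
which is already of the shape $n\,t(a)\,\bar n_0$ demanded in (1), with $\bar n_0\in\bar N_i$ automatic from the definition of $A(x,i)$. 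So the only genuine content of (1) is the assertion $a=(uy+1)^{-1}\in 1+\CP^c$, equivalently $uy\in\CP^c$.

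The heart of the argument is then a single valuation estimate that yields both parts. Suppose $\bar n(u)\in A(x,i)$, so $|u|\,|uy+1|^{-1}\le q^{-3i}$. If $|uy|\ge 1$ then $|uy+1|\le|uy|$, hence $|u|\,|uy+1|^{-1}\ge|u|\,|uy|^{-1}=|y|^{-1}\ge q^{k_0}$, contradicting $|u|\,|uy+1|^{-1}\le q^{-3i}$ as soon as $3i>-k_0$. Therefore, for $i$ large depending only on $k_0$ (hence only on $X$), membership in $A(x,i)$ forces $|uy|<1$; then $|uy+1|=1$, so $u\in\CP^{3i}$, i.e. $\bar n(u)\in\bar N_i$. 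Conversely, if $u\in\CP^{3i}$ and $3i+k_0\ge 1$, then $uy\in\CP^{3i+k_0}\subseteq\CP$ has $|uy|<1$, so $uy+1$ is a unit and $u/(uy+1)\in\CP^{3i}$, giving $\bar n(u)\in A(x,i)$. This proves (2) with $i_0=i_0(X):=\max(1,\lceil(1-k_0)/3\rceil)$ (the degenerate case $x=1$ being trivial since then $A(x,i)=\bar N_i$ outright). For (1), set $i_1=i_1(X,c):=\max(i_0,\lceil(c-k_0)/3\rceil)$; then for $i\ge i_1$ and $\bar n(u)\in A(x,i)$ one gets $u\in\CP^{3i}$, hence $uy\in\CP^{3i+k_0}\subseteq\CP^{c}$, hence $a=(uy+1)^{-1}\in 1+\CP^{c}$ because $1+\CP^{c}$ is a group under multiplication.

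The only mildly delicate point is the near-cancellation regime $|uy|=1$, where $uy+1$ could a priori be highly divisible; but there $|u|=|y|^{-1}\ge q^{k_0}$ is bounded below independently of $u$, so the constraint $|u/(uy+1)|\le q^{-3i}$ fails outright once $i$ is large, and the regime simply does not occur. Everything else is routine manipulation with the ultrametric inequality, and no hypothesis on the residue characteristic is needed for this lemma.
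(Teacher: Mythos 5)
Your proof is correct and follows essentially the same route as the paper: an explicit $2\times 2$ computation of $\bar n(u)n(y)$ in the big cell $B\bar N$ (the paper's relations $a=1-x\bar y$, $ay=\bar y$ are exactly your $a=(uy+1)^{-1}$, $\bar y=u/(uy+1)$) followed by ultrametric estimates using only the boundedness of $X$. The only differences are cosmetic improvements: you give explicit values of $i_0,i_1$ and you actually carry out the inclusion $\bar N_i\subseteq A(x,i)$ (and the $|uy|\ge 1$ regime) that the paper dismisses with ``can be checked similarly.''
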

\begin{proof}
By abuse of notation, for $x\in X$, we write $x=n(x)$. Since $X$ is compact, there is a constant $C$
such that $|x|<C$ for all $n(x)\in X\subset N$.

For $n(x)\in X, \bar n(y)\in A(x,i)$, we have $\bar n(y)n(x) \in B \cdot \bar N_{i}$, thus we can
assume that $$\bar n(y)n(x)=\begin{pmatrix}a& b\\ & a^{-1} \end{pmatrix} \bar n(\bar y)$$ for
$a\in F^\times, b\in F$ and $\bar y \in \CP^{3i}$. Rewrite the above expression as
$$\bar n (-y) \begin{pmatrix} a& b \\ & a^{-1} \end{pmatrix}= n(x)\bar n(-\bar y),$$
or

$$\begin{pmatrix}a & b\\ -ay & a^{-1}-by \end{pmatrix}=\begin{pmatrix}1-x\bar y & x\\ -\bar y &1 \end{pmatrix}.$$
Thus we get $$a=1-x\bar y, ay=\bar y.$$
Since $|x|<C$ and $\bar y\in \CP^{3i}$, it is clear that for any positive integer $c$, we can
choose $i_1(X,c)$ such that $a=1-x\bar y\in 1+\CP^c$ for all $ n(x)\in X$ and $\bar n(y)\in A(x,i)$. This proves (1).

If we take $i_0(X)=i_1(X,1)$, we get $a\in 1+\CP\subset \CO^\times$ for $i\ge i_0$. From $ay=\bar y$,
we get $y\in \CP^{3i}$. Thus we get that for $i\ge i_0(X)$, we have $ \bar n(y)\in \bar N_{i}$, i.e.,
$A(x,i)\subset \bar N_{i}$.

The other direction can be checked similarly if $i$ is large. We omit the details.
\end{proof}

Given a positive integer $i$ and a complex number $s\in \BC$, we
consider the following function $f^i_s$ on $\widetilde \SL_2(F)$:
$$f_s^i((g,\zeta))=\left\{\begin{array}{lll}\zeta \mu_{\psi^{-1}}(a)\eta_{s+1/2}(a),& \textrm{ if } g=
\left(\begin{pmatrix}a & b\\ & a^{-1}\end{pmatrix}, \zeta \right)  \bar n(x), \textrm{ with }
a\in F^\times, b\in F, \zeta\in \mu_2, x\in \CP^{3i}, \\ 0, &\textrm{ otherwise.} \end{array}\right.$$

\begin{lem}
\begin{enumerate}
\item There exists an integer $i_2(\eta)$ such that for all $i\ge i_2(\eta)$, $f_s^i$ defines a section
in $I(s,\eta,\psi^{-1})$.
\item Let $X$ be an open compact subset of $N$, then there exists an integer $I(X,\eta)\ge i_2(\eta)$
such that for all $i\ge I(X,\eta)$, we have
$$\tilde f_s^i(w x)=\vol(\bar N_{i})=q_F^{-3i},$$
for all $x\in X$, where $\tilde f_s^i=M_s(f_s^i)$.
\end{enumerate}
\end{lem}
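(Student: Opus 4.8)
The plan is to verify part (1) by a direct check that $f_s^i$ is well-defined and transforms correctly on the left under $\widetilde B(F)$, and to deduce part (2) from Lemma 3.8 by unfolding the intertwining integral. For part (1), I would first observe that the decomposition $g = (\mathrm{diag}(a,a^{-1}),\zeta)\bar n(x)$ of an element of the big cell is unique, so the formula is unambiguous once we check it is constant on the fibres of $\bar N_i$-cosets on the right and that the prescribed support $\widetilde B(F)\bar N_{3i}$ is right-$K$-invariant for a suitable compact open $K$; this is where the constraint $i \ge i_2(\eta)$ enters, chosen so that $\eta_{s+1/2}\mu_{\psi^{-1}}$ is trivial on $1 + \CP^{i}$ and hence $f_s^i$ is fixed on the right by a small congruence subgroup. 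The left transformation law $f_s^i(b g) = \mu_{\psi^{-1}}(b)\eta_{s+1/2}(b)\,\delta_B^{1/2}(b) f_s^i(g)$ for $b \in \widetilde B(F)$ follows from the cocycle computation already recorded in the excerpt (that $N$ and $\bar N$ split and $c(g_1,g_2)=1$ for $g_1 \in N$, $g_2 \in \bar N$), together with the genuineness bookkeeping for the $\mu_2$-component.

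For part (2), I would write
$$
\tilde f_s^i(w x) = M_s(f_s^i)(wx) = \int_{N(F)} f_s^i(w\, n\, w x)\, dn,
$$
and substitute $w n w^{-1} = \bar n(x')$ to convert this to an integral over $\bar N(F)$:
$$
\tilde f_s^i(wx) = \int_{\bar N(F)} f_s^i(\bar n\, x)\, d\bar n.
$$
Now $f_s^i(\bar n x)$ is nonzero precisely when $\bar n x \in \widetilde B(F)\bar N_{3i}$, i.e. when $\bar n \in A(x,i)$ in the notation of Lemma 3.8, and on that set Lemma 3.8(1) (applied with $c = i_2(\eta)$, say, so that the resulting torus parameter $a \in 1+\CP^{i_2(\eta)}$ makes $\mu_{\psi^{-1}}(a)\eta_{s+1/2}(a) = 1$) gives $f_s^i(\bar n x) = 1$ identically. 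Taking $I(X,\eta)$ at least as large as $\max(i_2(\eta), i_1(X,i_2(\eta)), i_0(X))$, Lemma 3.8(2) identifies $A(x,i) = \bar N_i$ exactly, so the integral is just $\vol(\bar N_i) = q_F^{-3i}$, uniformly in $x \in X$.

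The main obstacle is bookkeeping rather than conceptual: one must track the normalization of the Haar measure $dn$ on $N(F)$ against $d\bar n$ on $\bar N(F)$ under conjugation by $w$ so that the stated value is exactly $\vol(\bar N_i)$ with no stray constant, and one must be careful that the various thresholds $i_2(\eta)$, $i_1(X,c)$, $i_0(X)$ are chosen consistently — in particular that the choice of $c$ needed to kill the character $\eta_{s+1/2}\mu_{\psi^{-1}}$ on the torus part is compatible with the choice forcing $A(x,i) = \bar N_i$. Since Lemma 3.8 already packages both the "approximate" and the "exact" versions of the computation of $A(x,i)$, the argument reduces to invoking it at the right parameters; the genuineness/metaplectic cocycle contributions all cancel because the relevant group elements lie in the split pieces $N$ and $\bar N$.
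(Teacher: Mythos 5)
Your treatment of part (2) is essentially the paper's own argument: unfold $M_s$, observe that the integrand is supported exactly on the set where the conjugated unipotent lies in $A(x,i)$, invoke the $A(x,i)$-lemma (the paper's Lemma 3.7) with a threshold $c$ for which $\eta$ is trivial on $1+\CP^c$ so that the torus parameter contributes $\eta_{s+1/2}(a)\mu_{\psi^{-1}}(a)=1$, and take $I(X,\eta)\ge\max\{i_1(X,c),i_0(X),i_2(\eta)\}$ so that $A(x,i)=\bar N_i$; the measure bookkeeping you flag is harmless. (Two small slips: the support of $f_s^i$ is $\widetilde B(F)\bar N_i$, i.e.\ lower-left entry in $\CP^{3i}$, which in the paper's notation is $\bar N_i$, not $\bar N_{3i}$; and $wn w=\bar n\cdot(-I)$ rather than $\bar n$, though the central element causes no real trouble.)

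Part (1), however, is where your sketch has a genuine gap. The only nontrivial issue is smoothness, i.e.\ right invariance of $f_s^i$ under an open compact subgroup, and your proposal to choose $i_2(\eta)$ merely so that $\eta_{s+1/2}\mu_{\psi^{-1}}$ is trivial on $1+\CP^{i}$ does not deliver this. The paper takes the subgroup $K_{4i}$ and uses the Iwahori decomposition $K_{4i}=(N\cap K_{4i})(T\cap K_{4i})(\bar N\cap K_{4i})$. Invariance under $\bar N\cap K_{4i}\subset\bar N_i$ is immediate, and under $T\cap K_{4i}$ it follows from $\mu_{\psi^{-1}}$ being unramified together with the fact that the relevant cocycle is $(a,a_0)_F=1$ because $a_0\in 1+\CP^{4i}$ is a \emph{square} (Lemma 3.3) — not, as you assert, because the elements lie in the split pieces $N$ and $\bar N$. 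The delicate step is invariance under $N\cap K_{4i}$: one must show that right translation by a small upper-triangular unipotent $h$ preserves both the support $\widetilde B\bar N_i$ and the value of $f_s^i$, and this is precisely an application of the $A(x,i)$-lemma with $X=N\cap K_c$ (giving $A(h,i)=A(h^{-1},i)=\bar N_i$ and $\bar n h\in B\bar N_i$ with torus part in $1+\CP^c$). That is why the paper defines $i_2(\eta)=\max\wpair{c,\,i_0(N\cap K_c),\,i_1(N\cap K_c,c)}$; your proposed choice of $i_2(\eta)$ omits these two thresholds, and without them the claimed right-$K$-invariance of the support and of the value is unjustified. Once you incorporate this step, your argument coincides with the paper's.
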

Here $w=\begin{pmatrix}&1\\ -1& \end{pmatrix}$.
\begin{proof}
(1) From the definition, it is clear that $$f_s^i\left(
\left(\begin{pmatrix} a&b\\ & a^{-1}\end{pmatrix}, \zeta\right)
\tilde g \right)=\zeta\mu_{\psi^{-1}}(a)\eta_{s+1/2}(a)f_s^i(\tilde
g),$$ for $a\in F^\times, b\in F, \zeta \in \mu_2, $ and $\tilde
g\in \widetilde \SL_2(F)$. It suffices to show that for $i$ large,
there is an open compact subgroup $\widetilde H_i\subset \widetilde
\SL_2(F)$ such that $f_s^i(\tilde g \tilde h)=f_s^i (\tilde g)$ for
all $\tilde g\in \widetilde \SL_2(F),$ and $\tilde h\in \widetilde
H_i$.

If $\psi$ is unramified and the residue characteristic is not 2 as we assumed, the character
$\mu_{\psi^{-1}}$ is trivial on $\CO_F^\times$, see \cite{Sz} for example.

Let $c$ be a positive integer such that $ \eta$ is trivial on $1+\CP^c$. Let $i_2(\eta)=
\wpair{c, i_0(N\cap K_c ), i_1(N\cap K_c, c) }.$ For $i\ge i_2(\eta)$, we take
$\widetilde H_i= K_{4i}=1+M_2(\CP^{4i})$. Note that $K_{4i}$ splits, and thus can be viewed
as a subgroup of $\widetilde \SL_2$. We now check that for $i\ge i_2(\eta)$, we have
$f_s^i(\tilde g h)=f_s(\tilde g)$ for all $\tilde g\in \widetilde \SL_2$ and $h\in K_{4i}$.
We have the Iwahori decomposition $K_{4i}=(N\cap K_{4i})(T\cap K_{4i})(\bar N\cap K_{4i})$.
For $h\in \bar N\cap K_{4i}\subset \bar N_i$, it is clear that $f_s^i(\tilde g h)=f_s^i(\tilde g)$
by the definition of $f_s^i$. Now we take $h\in T\cap K_{4i}$. Write $h=t(a_0)$, with $a_0\in 1+\CP^{4i}$.
We have $\bar n(x) h= h \bar n(a_0^{-2} x)$. It is clear that $x\in \CP^{3i}$ if and only if $a_0^{-2}x\in \CP^{3i}$.
On the other hand, for any $a\in F^\times, b\in F$, we have
$$c\left( \begin{pmatrix} a& b\\ & a^{-1} \end{pmatrix}, t(a_0) \right)=(a,a_0)=1,$$
since $a_0\in 1+\CP_F^{4i}\subset F^{\times, 2}$, by Lemma 3.3. Thus
we get
$$ \left( \begin{pmatrix} a& b\\ & a^{-1} \end{pmatrix}, \zeta \right)\bar n(x)h=
\left( \begin{pmatrix} aa_0& ba_0^{-1}\\ & a^{-1}a_0^{-1} \end{pmatrix}, \zeta \right)\bar n(a_0^{-2}x). $$
By the definition of $f_s^i$, if $x\in \CP^{3i}$, for $g=\left(\begin{pmatrix} a& b\\ & a^{-1} \end{pmatrix},
\zeta \right)\bar n(x)$ we get
$$f_s^i(g h)=\mu_{\psi^{-1}}(a_0a)\eta_{s+1/2}(aa_0) =\mu_{\psi^{-1}}(a) \eta_{s+1/2}(a)=f_s^i(g),$$
by the assumption on $i$.

Finally, we consider $h\in  N\cap K_{4i}\subset N\cap K_c$. By assumption on $i$, we get
$$A(h,i)=A(h^{-1},i)=\bar N_i.$$
In particular, for $\bar n\in \bar N_i$, we have $ \bar n h \in B\cdot \bar N_i$ and
$\bar n h^{-1}\in B\cdot \bar N_i$. Now it is clear that $\tilde g\in \widetilde B\cdot \bar N_i$
if and only if $\tilde g h\in \widetilde B\cdot \bar N_i$. Thus $f_s^i(\tilde g)=0$ if and only if
$f_s^i(\tilde gh)=0$. Moreover, for $\bar n \in \bar N_i$, we have
$$\bar n h=\begin{pmatrix}a_0 & b_0\\ & a_0^{-1} \end{pmatrix}\bar n_0,$$
for $a_0\in 1+\CP^c$, $b_0\in F$ and $\bar n_0\in \bar N_i$. Thus for $\tilde g=
\left(\begin{pmatrix}a&b \\ & a^{-1}  \end{pmatrix}, \zeta\right) \bar n$ with $\bar n \in \bar N_i$,
we get
$$\tilde gh =\left( \begin{pmatrix}aa_0 & ab_0+a_0^{-1}b \\ & a_0^{-1}a^{-1} \end{pmatrix},\zeta \right)
\bar n_0.$$
Here we used the fact that $a_0\in 1+\CP^c$ is a square, and thus
$$c\left(\begin{pmatrix}a& b\\ & a^{-1} \end{pmatrix},\begin{pmatrix}a_0 & b_0\\ & a_0^{-1} \end{pmatrix}  \right)=1.$$

Since $\mu_{\psi^{-1}}(a_0)=1$, $(a,a_0)=1$ and $\eta_{s+1/2}(a_0)=1$, we get
$$f_s^i(\tilde g h)=f_s^i(g).$$
This finishes the proof of (1).

(2)   As in the proof of (1), let $c$ be a positive integer such that $\eta$ is trivial on $1+\CP^c$.
Take $I(X,\eta)=\max\wpair{i_1(X,c), i_0(X)}$. We have $$\tilde f_s^i(wx)=\int_{N} f_s^i(w^{-1} n w x)dn.$$
By the definition of $f_s^i$, $f_s^i(w^{-1} n w x)\ne 0$ if and only if $w^{-1} n w x\in B\bar N_{i}$,
if and only if $w^{-1} n w \in A(x,i)=\bar N_{i}$ for all $i\ge I(X),$ and $ x\in X$.
On the other hand, if $w^{-1} n w\in A(x,i)$, we have
$$w^{-1} n wx= \begin{pmatrix} a & b\\ & a^{-1}\end{pmatrix}\bar n_0, $$
with $a\in 1+\CP^c$. Thus
$$f_s^i( w^{-1}n wx)=\eta_{s+1/2}(a)\mu_{\psi^{-1}}(a)=1.$$ Now the assertion is clear.
\end{proof}

\subsection{The local converse theorem}
\begin{lem}
Let $\phi^m$ be the characteristic function of $1+\CP^m$. Then
\begin{enumerate}
\item for $n\in N_m$, we have $\omega_{\psi^{-1}}(n)\phi^m=\psi^{-1}(n)\phi^m;$
\item for $\bar n\in \bar N_m$, we have $\omega_{\psi^{-1}}(\bar n)\phi^m=\phi^m$.
\end{enumerate}
\end{lem}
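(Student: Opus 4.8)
The plan is to evaluate each side directly from the explicit Weil–representation formulas recorded above, using two arithmetic facts that follow from the residue characteristic being odd: first, $2\in\CO_F^\times$, so that $x^2\in 1+\CP^m$ whenever $x\in 1+\CP^m$; and second, $\psi$ is unramified, hence trivial exactly on $\CO_F$.

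For (1), I would write $n=n(b)$; from the description of $J_m$ above one has $b\in\CP^{-m}$. Since $\bigl(\omega_{\psi^{-1}}(n(b))\phi^m\bigr)(x)=\psi^{-1}(bx^2)\phi^m(x)$, it suffices to check that $\psi^{-1}\bigl(b(x^2-1)\bigr)=1$ on the support $1+\CP^m$ of $\phi^m$. Writing $x=1+y$ with $y\in\CP^m$ gives $x^2-1=y(y+2)\in\CP^m$, because $y+2\in\CO_F^\times$; hence $b(x^2-1)\in\CP^{-m}\CP^m=\CO_F$ and $\psi^{-1}$ annihilates it, while off the support both sides vanish. This gives $\omega_{\psi^{-1}}(n)\phi^m=\psi^{-1}(n)\phi^m$.

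For (2), write $\bar n=\bar n(c)$ with $c\in\CP^{3m}$; I may assume $c\ne 0$. The idea is to transport the question to the unipotent radical $N$ by the Weyl element, using $\bar n(c)=w^{-1}n(-c)w$ in $\SL_2(F)$. Denote by $f\mapsto\widehat f$ the Fourier transform attached to $\psi^{-1}$, so that $\omega_{\psi^{-1}}(w)f=\gamma(\psi^{-1})\widehat f$ and $\omega_{\psi^{-1}}(n(-c))f(x)=\psi(cx^2)f(x)$. Conjugating the second operator by the first cancels the scalar $\gamma(\psi^{-1})$, while the sign introduced by the metaplectic cocycle depends on $c$ through a homomorphism $\bar N(F)\cong F\to\mu_2$, which vanishes because $(F,+)$ is $2$-divisible; hence $\omega_{\psi^{-1}}(\bar n(c))f$ equals the inverse Fourier transform of $x\mapsto\psi(cx^2)\widehat f(x)$. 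A short computation gives $\widehat{\phi^m}(x)=\vol(\CP^m)\psi^{-1}(2x)$ for $x\in\CP^{-m}$ and $0$ otherwise; since $cx^2\in\CP^{3m}\CP^{-2m}=\CP^m\subseteq\CO_F$ when $x\in\CP^{-m}$, multiplication by $\psi(cx^2)$ fixes $\widehat{\phi^m}$, and applying the inverse Fourier transform recovers $\phi^m$.

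The one place that needs care is the cocycle bookkeeping in part (2): pinning down a lift of $w$ for which $\omega_{\psi^{-1}}(w)f=\gamma(\psi^{-1})\widehat f$, and checking that the sign produced by conjugating the split lift of $n(-c)$ by it is trivial for $c\in\CP^{3m}$ — for which the $2$-divisibility of $F$, hence once more the hypothesis on the residue characteristic, is used. One can sidestep this by instead inserting the Bruhat decomposition $\bar n(c)=n(c^{-1})\,t(-c^{-1})\,w\,n(c^{-1})$ and composing the three explicit operators; in either case nothing beyond the two arithmetic facts above is needed.
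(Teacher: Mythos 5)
Your proof is correct and follows essentially the same route as the paper: part (1) is the same direct check that $b(x^2-1)\in\CO_F$ on the support, and part (2) is the same conjugation $\bar n(c)=w^{-1}n(-c)w$ combined with the explicit computation of $\omega_{\psi^{-1}}(w)\phi^m$ (supported on $\CP^{-m}$, where $\psi(cx^2)=1$), the only addition being your careful metaplectic cocycle bookkeeping, which the paper handles implicitly via the splitting of $N$ and $\bar N$. One minor remark: the fact that $x\in 1+\CP^m$ implies $x^2\in 1+\CP^m$ does not actually need the odd residue characteristic (since $2y\in\CP^m$ in any case), so your attribution of that step to the hypothesis is harmless but unnecessary.
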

\begin{proof}
(1) For $n=n(b)\in N_m$, we have $b\in \CP^{-m}$. For $x\in 1+\CP^m$, we have $bx^2-b\in \CO_F$ and thus
$$\omega_{\psi^{-1}}(n)\phi^m(x)=\psi^{-1}(bx^2)\phi^m(x)=\psi^{-1}(b)\phi^m(x).$$
For $x\notin 1+\CP^m$, we have $\omega_{\psi^{-1}}(n)\phi^m(x)=\psi^{-1}(bx^2)\phi^m(x)=0$.
The first assertion follows.

(2) For $\bar n\in \bar N_m$, we can write $\bar n =w^{-1}n(b) w$ with $b\in \CP^{3m}$.
Let $\phi'=\omega_{\psi^{-1}}(w)\phi^m$. We have
\begin{align*}
\phi'(x)&=\gamma(\psi^{-1})\int_F \phi^m(y)\psi^{-1}(2xy)dy\\
&=\gamma(\psi^{-1})\psi^{-1}(2x)\int_{\CP^m}\psi^{-1}(2xz)dz\\
&=\gamma(\psi^{-1})\psi^{-1}(2x)\vol(\CP^m)
\textrm{Char}(\CP^{-m})(x)
\end{align*}
where $\textrm{Char}(\CP^{-m})$ denotes the characteristic function
of the set $\CP^{-m}$. It is clear that
$\omega_{\psi^{-1}}(n(b))\phi'=\phi'$. Thus we have
$$\omega_{\psi^{-1}}(\bar n)\phi^m=\omega_{\psi^{-1}}(w^{-1}n(b))\phi'=\omega_{\psi^{-1}}(w^{-1})\phi'=
\omega_{\psi^{-1}}(w^{-1})\omega_{\psi^{-1}}(w)\phi^m=\phi^m.$$
This completes the proof.
\end{proof}

\begin{thm}
Suppose that the residue characteristic of $F$ is not $2$ and $\psi$ is an unramified character of $F$.
Let $(\pi,V_\pi)$ and $(\pi',V_{\pi'})$ be two $\psi$-generic representations of $\SL_2(F)$ with the
same central character.
\begin{enumerate}
\item If $\gamma(s,\pi,\eta,\psi)=\gamma(s,\pi',\eta,\psi)$ for all quasi-characters $\eta$ of $F^\times$,
then $\pi\cong \pi'$.
\item There is an integer $l=l(\pi,\pi')$ such that if $\eta$ is quasi-character of $F^\times$ with
conductor $\cond(\eta)>l$, then
$$\gamma(s,\pi,\eta,\psi)=\gamma(s,\pi',\eta,\psi).$$
\end{enumerate}
\end{thm}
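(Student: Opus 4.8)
The plan is to prove both parts by exploiting the local zeta integrals $\Psi(W,\phi,f_s)$ together with the functional equation defining $\gamma(s,\pi,\eta,\psi)$, and to feed in the Howe vectors constructed in \S3.1 as the test data. The key mechanism is that, for a suitable choice of $W=W_{v_m}$, $\phi=\phi^m$ (the characteristic function of $1+\CP^m$), and $f_s=f_s^i$ as in Lemma 3.9, both sides of the functional equation $\Psi(W,\phi,M_s(f_s))=\gamma(s,\pi,\eta,\psi)\Psi(W,\phi,f_s)$ become explicit, essentially local, integrals over a small neighbourhood of the Weyl element $w$. Concretely, $\Psi(W_{v_m},\phi^m,f_s^i)$ should reduce — using the Iwasawa decomposition $\SL_2 = N T \bar N$ and Lemma 3.10, which says $\omega_{\psi^{-1}}(\bar n)\phi^m=\phi^m$ and $\omega_{\psi^{-1}}(n)\phi^m = \psi^{-1}(n)\phi^m$ for $n,\bar n$ in the congruence subgroups — to an integral of $W_{v_m}(t(a))$ against the section, which by Corollary 3.4 is supported on $a\in Z(1+\CP^m)$ and there equals the central character; this makes $\Psi(W_{v_m},\phi^m,f_s^i)$ a nonzero constant independent of whether we use $\pi$ or $\pi'$. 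Similarly $\Psi(W_{v_m},\phi^m,M_s(f_s^i))$, after inserting $\tilde f_s^i = M_s(f_s^i)$ and using Lemma 3.9(2) which computes $\tilde f_s^i(wx)=q_F^{-3i}$ on a compact set $X\subset N$, reduces to an integral over the big cell of $W_{v_m}(t(a)w n)$ against $\tilde f_s^i$, and here Lemma 3.6 (Baruch's lemma) gives $W_{v_m}(twn)=W_{v_m'}(twn)$ for $n\in N-N_m$ once $m\ge 4L$, while the contribution from $n\in N_m$ is handled directly.

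For part (1): given that $\gamma(s,\pi,\eta,\psi)=\gamma(s,\pi',\eta,\psi)$ for all $\eta$, the functional equation and the equality of the $\Psi$-side for $f_s^i$ forces $\Psi(W_{v_m},\phi^m,M_s(f_s^i)) = \Psi(W_{v_m'},\phi^m,M_s(f_s^i))$ for all large $i$ and $m$. Unwinding this via the big-cell integral shows that $W_{v_m}$ and $W_{v_m'}$ agree on a set of the form $\{t w n : t\in T, n\in N\}$ modulo the already-known agreement on $B$ (Corollary 3.5). Since $\SL_2 = B \cup B w N$ (Bruhat), agreement on $B$ together with agreement on the big cell $BwN$ — which one should be able to upgrade from $t w n$ with $n$ ranging over an exhausting family of compact sets — yields $W_{v_m}(g)=W_{v_m'}(g)$ for all $g$. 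Then letting $m\to\infty$ and using that the $v_m$ generate (a standard property of Howe vectors: any $W\in\CW(\pi,\psi)$ is approximated/recovered from the $W_{v_m}$), one concludes $\CW(\pi,\psi)=\CW(\pi',\psi)$ as subspaces of functions on $\SL_2(F)$, hence $\pi\cong\pi'$ by uniqueness of the Whittaker model.

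For part (2): here one does not assume anything but instead estimates directly. Choose $l=l(\pi,\pi')$ to be, say, $\max\{4L, \text{some function of the conductors/levels}\}$; for $\eta$ with $\cond(\eta)>l$, pick $m$ with $4L\le m$ and $m$ large relative to $\cond(\eta)$, and pick $i\ge I(X,\eta)$ as in Lemma 3.9. Then the computation above shows $\Psi(W_{v_m},\phi^m,f_s^i)$ and $\Psi(W_{v_m},\phi^m,M_s(f_s^i))$ are each given by the same explicit expressions for $\pi$ and $\pi'$: the $f_s^i$-side because it only sees $W_{v_m}|_B$ (Corollary 3.5) and the $M_s(f_s^i)$-side because Lemma 3.9(2) makes $\tilde f_s^i$ constant on the relevant support and Lemma 3.6 gives $W_{v_m}(twn)=W_{v_m'}(twn)$ off $N_m$ while the $N_m$ part again only involves $W_{v_m}|_B$ after a change of variables. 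Dividing, $\gamma(s,\pi,\eta,\psi)=\gamma(s,\pi',\eta,\psi)$.

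The main obstacle I expect is the careful bookkeeping in the big-cell integral for $\Psi(W,\phi,M_s(f_s))$: one must show that the support of the integrand in the variable $g=twn$ is confined, after using the equivariance of the Weil representation and the $J_m$-equivariance of $v_m$, to a region where both Lemma 3.6 and the explicit value of $\tilde f_s^i$ from Lemma 3.9(2) apply, and that the leftover piece (coming from $n\in N_m$, where Baruch's lemma does not give equality) contributes a term depending only on $W_{v_m}|_{B}$ and hence is also insensitive to $\pi$ versus $\pi'$ by Corollary 3.5. Getting the quantifiers straight — choosing $m$ large in terms of $L$, then $i$ large in terms of $m$ and $\eta$, and checking all the earlier lemmas' hypotheses ($m\ge L$ for Lemma 3.1–3.2 and Corollary 3.4, $m\ge 4L$ for Lemma 3.6, $i\ge i_2(\eta)$ and $i\ge I(X,\eta)$ for Lemma 3.9) are simultaneously met — is the delicate part, but it is exactly the pattern carried out in \cite{Ba1,Ba2,Zh}, so no genuinely new difficulty should arise.
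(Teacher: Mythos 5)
Your test data and overall strategy (Howe vectors $v_m$, the function $\phi^m$, the sections $f^i_s$, computing both sides of the local functional equation, with Lemma 3.6 handling $n\in N\setminus N_m$) are exactly those of the paper, and your part (1) is essentially the paper's argument. Two things need tightening there: the ``unwinding'' must be done by Mellin inversion on $F^\times$ (equality of the zeta integrals for one $\eta$ kills only one Mellin coefficient), and since the evaluation $\Psi(W_m,\phi^m,f^i_s)=q^{-3i-m}$ requires $m\ge \cond(\eta)$, the level $m$ must grow with $\eta$; the paper sidesteps this bookkeeping by first using Lemma 3.1(3) together with Lemma 3.6 at level $k=4L$ to replace $W_{v_m}(t(a)w)-W_{v'_m}(t(a)w)$ by $q^{k-m}\bigl(W_{v_k}(t(a)w)-W_{v'_k}(t(a)w)\bigr)$, so that its final identity (Eq.\ (3.6)) involves a single function independent of $m$ and $\eta$. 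Also, for the last step of (1) you only need one common nonzero Whittaker function plus irreducibility; no approximation property of the $v_m$ is required (and $NT\bar N$ is the opposite big cell, not the Iwasawa decomposition).

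Part (2), however, has a genuine gap. You claim the contribution of $n\in N_m$ to $\Psi(W_m,\phi^m,M_s(f^i_s))$ ``only involves $W_{v_m}|_B$ after a change of variables'' and hence is the same for $\pi$ and $\pi'$ by Corollary 3.5. It is not: by $N_m$-equivariance that contribution is an integral of $W_{v_m}(t(a)w)$ against explicit factors, and $t(a)w\notin B$; these values are precisely the data not controlled by Corollary 3.5 or Lemma 3.6. If they were so controlled, your argument would give $\gamma(s,\pi,\eta,\psi)=\gamma(s,\pi',\eta,\psi)$ for \emph{every} $\eta$, and then part (1) would force any two $\psi$-generic representations with the same central character to be isomorphic, which is absurd. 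The correct mechanism is the fixed-level reduction above: up to explicit constants the difference of gamma factors equals $\int_{F^\times}\bigl(W_{v_k}(t(a)w)-W_{v'_k}(t(a)w)\bigr)\psi^{-1}(2a)\chi(a)\eta^{-1}(a)|a|^{-s}\,da$ with $k=4L$ fixed; by Lemma 3.2 this integrand is supported in $a\in\CP^{-6L}$, and by smoothness the bracketed function (as well as $\psi^{-1}(2a)$ and the unramified $\chi$ on that support) is invariant under $a\mapsto a_0a$ with $a_0\in 1+\CP^{l}$ for some $l$ depending only on $v,v',L$; integration against $\eta^{-1}$ with $\cond(\eta)>l$ then vanishes by character orthogonality. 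Without reducing from $m$ (which must grow with $\cond(\eta)$) to the fixed level $k$, no single $l=l(\pi,\pi')$ can be chosen, which is why your proposed $l$ remains unspecified and your argument for (2) does not close.
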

\noindent\textbf{Remark:} D. Jiang conjectured the local converse theorem for any reductive group $G$,
in \cite{Jng}, Conjecture 3.7. Our theorem can be viewed one example of that general conjecture.
\begin{proof}
We fix the notations $v\in V_\pi, v'\in V_{\pi'}$ and $L$ as before.

Let $\eta$ be a quasi-character of $F^\times$.  We take an integer $m\ge \wpair{6L, \cond(\eta)}$.
We consider the Howe vectors $v_m$ and $v'_m$. We also take an integer $i\ge \wpair{i_2(\eta), I(N_m,\eta), m}$.
In particular we have a section $f_s^i\in I(s,\eta,\psi)$. Let $W_m=W_{v_m}$ or $W_{v_m'}$.
 We compute the integral $\Psi(W_m, \phi^m,f_s^i)$ on the open dense subset $T\bar N(F)=N(F)\setminus N(F)T\bar N(F)$
 of $N(F)\setminus \SL_2(F)$. For $ g=nt(a)\bar n$, we can take the quotient measure as $dg=|a|^{-2} d\bar n da.$
 By the definition of $f_s^i$, we get
\begin{align*}
\Psi(W_m, \phi^m,f_s^i)&=\int_{T\times \bar N(F)}W_m(t(a)\bar
n)(\omega_{\psi^{-1}}(t(a)\bar n) \phi^m)
(1)f_s^i(t(a)\bar n) |a|^{-2}d\bar n da\\
&=\int_{T\times \bar N_i} W_m(t(a)\bar n)\mu_{\psi^{-1}}(a)|a|^{1/2}
\omega_{\psi^{-1}}(\bar n)\phi^m(a) \mu_{\psi^{-1}}(a) \eta_{s+1/2}(a) |a|^{-2}d\bar n da\\
&=\int_{T\times \bar N_i}W_m(t(a)\bar n)\omega_{\psi^{-1}}(\bar
n)\phi^m(a) \chi(a) \eta_{s-1}(a)d\bar n da,
\end{align*}
where $\chi(a)=\mu_{\psi^{-1}}(a)\mu_{\psi^{-1}}(a)=(a,-1)_F$. Since
$i\ge m$, we get $\bar N_i\subset \bar N_m$. By Lemma 3.1, and Lemma
3.9, we get $W_m(t(a)\bar n)=W_m(t(a))$ and $\omega_{\psi^{-1}}(\bar
n)\phi^m=\phi^m$. Thus we get
$$ \Psi(W_m, \phi^m,f_s^i)=q^{-3i}\int_{F^\times} W_m(t(a))\phi^m(a)\chi(a)\eta_{s-1}(a)da.$$
Since $\phi^m=\textrm{Char}(1+\CP^m)$, and for $a\in 1+\CP^m$, we
have $W_m(t(a))=1$ by Lemma 3.1, we get
$$\Psi(W_m,\phi^m, f_s^i)=q^{-3i}\int_{1+\CP^m}\chi(a)\eta(a)da.$$
Since $\chi(a)=1$ for $a\in 1+\CP^m$, and $ m\ge \cond(\eta)$ by assumption, we get
$$\Psi(W_m, \phi^m, f_s^i)=q^{-3i-m}.$$
The above calculation works for both $W_{v_m}$ and $W_{v_m'}$, thus we have
\begin{equation}\label{eq22}\Psi(W_{v_m}, \phi^m, f_s^i)=\Psi(W_{v'_m}, \phi^m, f_s^i)=q^{-3i-m}.\end{equation}

Next, we compute the other side local zeta integral $\Psi(W_m,
\phi^m, \tilde f_s^i)$ on the open dense subset $N(F)\setminus
N(F)TwN(F)\subset N(F)\setminus \SL_2(F)$,  where $\tilde
f_s^i=M_s(f_s^i)$. We have
\begin{align*}
\Psi(W_m, \phi^m, \tilde f_s^i)&=\int_{T\times N(F)}W_m(t(a)wn)(\omega_{\psi^{-1}}(t(a)wn)\phi^m)(1)
\tilde f_s^i(t(a)wn)|a|^{-2}dn da\\
&=\int_{T\times N_m}W_m(t(a)wn)(\omega_{\psi^{-1}}(t(a)wn)\phi^m)(1) \tilde f_s^i(t(a)wn)|a|^{-2}dn da\\
&+\int_{T\times
(N(F)-N_m)}W_m(t(a)wn)(\omega_{\psi^{-1}}(t(a)wn)\phi^m)(1) \tilde
f_s^i(t(a)wn)|a|^{-2}dn da.
\end{align*}
By Lemma 3.6, we get $W_{v_m}(t(a)wn)=W_{v_m'}(t(a)wn)$ for all
$n\in N(F)-N_m$. Thus
\begin{align*}
&\Psi(W_{v_m},\phi^m,\tilde f_s^i)-\Psi(W_{v_m'}, \phi^m, \tilde f_s^i)\\
=& \int_{T\times N_m}( W_{v_m}(t(a)wn)-W_{v_m'}(t(a)wn))(\omega_{\psi^{-1}}(t(a)wn)\phi^m)(1)
\tilde f_s^i(t(a)wn)|a|^{-2}dn da.
\end{align*}
Since $i\ge I(N_m, \eta)$, we get $$\tilde f_s^i(t(a)w
n)=\mu_{\psi^{-1}}(a)\eta^{-1}_{3/2-s}(a) q^{-3i}_F,$$ by Lemma 3.8.
On the other hand, by Lemma 3.1 and Lemma 3.9, for $n\in N_m$, we
get
$$ W_{m}(t(a)wn)=\psi(n)W_m(t(a)w), (\omega_{\psi^{-1}}(t(a)w n)\phi^m)(1)=\psi^{-1}(n)
(\omega_{\psi^{-1}}(t(a)w )\phi^m)(1). $$
Thus
\begin{align}
&\Psi(W_{v_m},\phi^m,\tilde f_s^i)-\Psi(W_{v_m'}, \phi^m, \tilde f_s^i)\\
=& q_F^{-3i+m}\int_{T}( W_{v_m}(t(a)w)-W_{v_m'}(t(a)w))(\omega_{\psi^{-1}}(w)\phi^m)(a)
\chi(a)\eta^{-1}(a)|a|^{-s} da.\nonumber
\end{align}

By (3.2), (3.3) and the local functional equation, we get
\begin{align}
&q^{-2m}(\gamma(s,\pi,\eta,\psi)-\gamma(s,\pi',\eta,\psi))\\
=&\int_{F^\times} ( W_{v_m}(t(a)w)-W_{v_m'}(t(a)w))(\omega_{\psi^{-1}}(w)\phi^m)(a)
\chi(a)\eta^{-1}(a)|a|^{-s} da.\nonumber
\end{align}
Let $k=4L$. Since $m\ge 6L>k$, by Lemma 3.1 and Lemma 3.6, we get
\begin{align*}
W_{v_m}(t(a)w)-W_{v_m'}(t(a)w)
=&\frac{1}{\vol(N_m)}\int_{N_m} (W_{v_k}(t(a)w n)-W_{v_k'}(t(a)wn) )\psi^{-1}(n)dn\\
=& \frac{1}{\vol(N_m)} \int_{N_k}(W_{v_k}(t(a)w n)-W_{v_k'}(t(a)wn) )\psi^{-1}(n)dn\\
=&\frac{\vol(N_k)}{\vol(N_m)} (W_{v_k}(t(a)w)-W_{v_k'}(t(a)w))\\
=& q^{k-m} (W_{v_k}(t(a)w)-W_{v_k'}(t(a)w)).
\end{align*}

Now we can rewrite (3.4) as
\begin{align}
&q^{-m-k}(\gamma(s,\pi,\eta,\psi)-\gamma(s,\pi',\eta,\psi))\\
=&\int_{F^\times} ( W_{v_k}(t(a)w)-W_{v_k'}(t(a)w))(\omega_{\psi^{-1}}(w)\phi^m)(a)
\chi(a)\eta^{-1}(a)|a|^{-s} da.\nonumber
\end{align}
By Lemma 3.2, if $a\notin \CP^{-6L}$, i.e., $a^2\notin \CP^{-3k}$,
we get $W_{v_k}(t(a)w)=0=W_{v_k'}(t(a)w)$. Thus the integral on the
right side in the above formula (3.5) can be taken over $\CP^{-6L}$. For
$a\in \CP^{-6L}$ and $m\ge 6L$ (as we assumed), by the calculation given in the proof of Lemma 3.9, we have
\begin{align*}
(\omega_{\psi^{-1}}(w)\phi^m)(a)
&=\gamma(\psi^{-1})\psi^{-1}(2a)\vol(\CP^m) \textrm{Char}(\CP^{-m})(a)\\
&=\gamma(\psi^{-1})\psi^{-1}(2a)q^{-m}.
\end{align*}
Plugging this into (3.5), we get
\begin{align}
&q^{-k}\gamma(\psi^{-1})^{-1}(\gamma(s,\pi,\eta,\psi)-\gamma(s,\pi',\eta,\psi))\\
=&\int_{F^\times} ( W_{v_k}(t(a)w)-W_{v_k'}(t(a)w))\psi^{-1}(2a) \chi(a)\eta^{-1}(a)|a|^{-s} da.\nonumber
\end{align}
Now we can prove our theorem. We consider (1) first. Suppose that
$\gamma(s,\pi,\eta,\psi)=\gamma(s,\pi',\eta,\psi) $ for all
quasi-characters $\eta$ of $F^\times$, we get
$$ \int_{F^\times} ( W_{v_k}(t(a)w)-W_{v_k'}(t(a)w))\psi^{-1}(2a) \chi(a)\eta^{-1}(a)|a|^{-s} da=0$$
for all quasi-characters $\eta$. By Mellin inversion, we get
$$ ( W_{v_k}(t(a)w)-W_{v_k'}(t(a)w))\psi^{-1}(2a)\equiv 0,$$
or $$ W_{v_k}(t(a)w)\equiv W_{v_k'}(t(a)w).$$ By Lemma 3.1, Lemma
3.6, Corollary 3.5 and the Iwasawa decomposition $\SL_2=B\cup BwB$,
we get
$$ W_{v_k}(g)=W_{v'_k}(g),$$
for all $g\in \SL_2(F)$. By the uniqueness of Whittaker model, we get $\pi\cong \pi'$. This proves (1).

Next, we consider (2). Let $l=l(\pi,\pi')$ be an integer such that $l\ge 6L$ and
$$W_{v_k}(t(a_0a)w)=W_{v_k}(t(a)w), \textrm{ and } W_{v_k'}(t(a_0a)w)=W_{v_k'}(t(a)w),$$
for all $a_0\in 1+\CP^l$ and all $a\in \CP^{-6L}$. Such an $l$
exists because the functions $a\mapsto W_{v_k}(t(a)w)$, $a\mapsto
W_{v_k'}(t(a)w)$ on $\CP^{-6L}\subset F^\times$ are continuous. Note
that $k=4L$ and $L$ only depends on the choice of $v$ and $v'$. On
the other hand, for $a\in \CP^{-6L}$, it is easy to see that
$$\psi^{-1}(2a_0a)=\psi^{-1}(2a), \textrm{ for all }a_0\in 1+\CP^l,$$
since $l\ge 6L$. It is also clear that $\chi(a_0a)=\chi(a)$ for all
$a_0\in 1+\CP^l$. In fact, the character $\chi$ is unramified. As we
noticed before, the integrand of the right side integral of (3.6)
has support in $\CP^{-6L}$. Let $\eta$ be a quasi-character of
$F^\times$ with $\cond(\eta)>l$, now it is clear that integral of
the right side of (3.6) vanishes, thus we get
$$\gamma(s,\pi,\eta,\psi)=\gamma(s,\pi',\eta,\psi).$$
This finishes the proof.
\end{proof}

\section{A strong multiplicity one theorem}

\subsection{Global genericness} In this subsection, we discuss the relation
between globally generic and locally generic. Let $F$ be a number
field and $\BA$ be its adele. Let $\varphi$ be a cusp form on
$\SL_2(F)\setminus \SL_2(\BA)$. Since the group $N(F)\setminus
N(\BA)$ is compact and abelian, we have the Fourier expansion
$$\varphi(g)=\sum_{\psi\in \widehat{N(F)\setminus N(\BA)}} W_\varphi^\psi(g),$$
where $$W_\varphi^\psi(g)=\int_{N(F)\setminus N(\BA)} \varphi(ng)\psi^{-1}(n)dg.$$
Since $\varphi$ is a cusp form, we get $\varphi_0\equiv 0$, thus we get
$$\varphi(g)=\sum_{\psi\in \widehat{N(F)\setminus N(\BA)} \atop \psi\ne 0}W_\varphi^\psi(g).$$
Fix a nontrivial additive character $\psi$ of $N(F)\setminus N(\BA)$, then
$$\wpair{\widehat{N(F)\setminus N(\BA)}}\setminus \{0\}=\wpair{\psi_\kappa, \kappa\in F^\times},$$
where $\psi_\kappa(a)=\psi(\kappa a), a\in \BA$. If $\kappa\in F^{\times,2}$, say $\kappa=a^2$, we have
$$W_{\varphi}^{\psi_\kappa }(g)=W_{\varphi}^\psi(t(a)g).$$
Thus we get
$$\varphi(g)=\sum_{\kappa\in F^\times/ F^{\times,2}} \sum_{a\in F^\times} W_{\varphi}^{\psi_\kappa}(t(a)g).$$

\begin{cor}
If $\varphi$ is a nonzero cusp form, there exists $\kappa\in F^\times$ such that
$$W_{\varphi}^{\psi_\kappa}\ne 0.$$
\end{cor}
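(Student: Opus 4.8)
The statement is an immediate consequence of the Fourier expansion established just above. The plan is: first, recall that since $\varphi$ is a cusp form on the compact abelian quotient $N(F)\setminus N(\BA)$, its constant term vanishes and we have the pointwise identity
\[
\varphi(g)=\sum_{\psi'\in \widehat{N(F)\setminus N(\BA)},\ \psi'\neq 0} W_\varphi^{\psi'}(g),
\]
the sum ranging over the nontrivial characters of $N(F)\setminus N(\BA)$. Second, recall that every such nontrivial character is of the form $\psi_\kappa$ for a (unique) $\kappa\in F^\times$. Third, argue by contraposition: if $W_\varphi^{\psi_\kappa}=0$ for all $\kappa\in F^\times$, then every term of the displayed expansion vanishes, hence $\varphi\equiv 0$, contradicting the hypothesis that $\varphi$ is nonzero. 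Therefore $W_\varphi^{\psi_\kappa}\neq 0$ for some $\kappa\in F^\times$.

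There is essentially no obstacle here; the only point requiring care is the pointwise validity of the Fourier expansion, which holds because $\varphi$ is smooth and $N(F)\setminus N(\BA)$ is compact, and which has already been used above, so it may be taken as given. One may optionally note, using the relation $W_\varphi^{\psi_\kappa}(g)=W_\varphi^\psi(t(a)g)$ for $\kappa=a^2$, that it suffices to run the argument over representatives of $F^\times/F^{\times,2}$, matching the refined double-sum form of the expansion; but this refinement is not needed for the conclusion.
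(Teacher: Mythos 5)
Your argument is correct and is exactly how the corollary follows in the paper: it is an immediate consequence of the Fourier expansion with vanishing constant term and the parametrization of the nontrivial characters of $N(F)\setminus N(\BA)$ by $\psi_\kappa$, $\kappa\in F^\times$, which is why the paper states it without further proof. Nothing is missing.
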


Let $(\pi,V_\pi)$ be a cuspidal automorphic representation of
$\SL_2(F)\setminus \SL_2(\BA)$. We call $\pi$ is
$\psi_\kappa$-generic, if there exists $\varphi\in V_\pi$ such that
$$W_\varphi^{\psi_\kappa}\nequiv 0.$$

\begin{cor}
Let $\pi$ be a cuspidal automorphic representation of
$\SL_2(F)\setminus \SL_2(\BA)$ and $\psi$ be a nontrivial additive
character of $F\setminus \BA$. Then there exists $\kappa\in
F^\times$, such that $\pi$ is $\psi_\kappa$-generic.
\end{cor}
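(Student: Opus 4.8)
The plan is to read this off directly from the preceding corollary together with the definition of $\psi_\kappa$-genericity, so essentially no new work is needed. Since $\pi$ is cuspidal, its space $V_\pi$ contains a nonzero cusp form $\varphi$ on $\SL_2(F)\setminus\SL_2(\BA)$. I would apply the previous corollary to this $\varphi$ to obtain some $\kappa\in F^\times$ for which $W_\varphi^{\psi_\kappa}\nequiv 0$. By the definition recalled just above — $\pi$ is $\psi_\kappa$-generic precisely when $W_\varphi^{\psi_\kappa}\nequiv 0$ for some $\varphi\in V_\pi$ — this says exactly that $\pi$ is $\psi_\kappa$-generic, which completes the argument. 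Note that the $\kappa$ produced may a priori depend on the chosen $\varphi$, but that is irrelevant here: the statement only asserts existence of a single pair $(\kappa,\varphi)$.

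The only point that deserves to be spelled out is the justification of the preceding corollary, which carries the actual (and elementary) content. One expands $\varphi$ in a Fourier series along the compact abelian group $N(F)\setminus N(\BA)$, uses cuspidality to kill the constant term $\varphi_0$, identifies the nontrivial characters with $\{\psi_\kappa : \kappa\in F^\times\}$, and regroups the sum over $\kappa\in F^\times/F^{\times,2}$ by means of the identity $W_\varphi^{\psi_{a^2}}(g)=W_\varphi^\psi(t(a)g)$; if every $W_\varphi^{\psi_\kappa}$ were identically zero, then $\varphi\equiv 0$, contradicting $\varphi\neq 0$. I do not expect any real obstacle in this corollary: the substantive difficulties of the section — the local converse theorem (Theorem 3.10) and the archimedean nonvanishing of Lemma 4.9 — enter only later, in the comparison of two representations $\pi$ and $\pi'$ for the strong multiplicity one statement, and not in producing a single generic additive character for a single $\pi$.
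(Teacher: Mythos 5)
Your proposal is correct and matches the paper's (implicit) argument exactly: the corollary is an immediate consequence of the preceding corollary on nonvanishing of some $W_\varphi^{\psi_\kappa}$ for a nonzero cusp form $\varphi\in V_\pi$, combined with the definition of $\psi_\kappa$-genericity, and your sketch of the Fourier-expansion argument behind that corollary is the same as the paper's. No issues.
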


\begin{thm}
Let $\pi=\otimes_v\pi_v$ be an irreducible cuspidal automorphic
representation of $\SL_2(\BA)$ and $\psi=\otimes \psi_v$ be a
nontrivial additive character of $F\setminus \BA$. Then $\pi$ is
$\psi$-generic if and only if each $\pi_v$ is $\psi_v$-generic.
\end{thm}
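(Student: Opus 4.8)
The plan is to prove the two implications separately. The implication ``$\pi$ globally $\psi$-generic $\Rightarrow$ each $\pi_v$ locally $\psi_v$-generic'' is the standard factorization argument and I would only sketch it: writing $V_\pi\cong\bigotimes_v V_{\pi_v}$ (restricted tensor product) and choosing a pure tensor $\varphi=\otimes_v\xi_v$ with $W_\varphi^\psi(1)\neq 0$, the global Whittaker functional $\varphi\mapsto W_\varphi^\psi(1)$ is $(N(\BA),\psi)$-quasi-invariant, so by uniqueness of local Whittaker functionals it factors as $\bigotimes_v\Lambda_v$ with each $\Lambda_v\neq 0$ a $\psi_v$-Whittaker functional on $\pi_v$.

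For the converse, assume each $\pi_v$ is $\psi_v$-generic. By Corollary 4.2 there is $\kappa\in F^\times$ for which $\pi$ is globally $\psi_\kappa$-generic, and then by the direction just proved, applied to the additive character $\psi_\kappa$, each $\pi_v$ is also $(\psi_v)_\kappa$-generic. Thus every $\pi_v$ is simultaneously $\psi_v$- and $(\psi_v)_\kappa$-generic, so Proposition 2.1 --- which is available at every place, archimedean or not, with no restriction on the residue characteristic, and is the only genuinely local input needed --- yields $\pi_v\cong\pi_v^\kappa$, equivalently $\pi_v\cong\pi_v^{\kappa^{-1}}$, for all $v$.

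To pass to the global statement I would introduce, for $\varphi\in V_\pi$, the function $\varphi^{\kappa^{-1}}(g):=\varphi(g^{\kappa^{-1}})$; here $g\mapsto g^{\kappa^{-1}}$ is conjugation by $\diag(\kappa^{-1},1)\in\GL_2(F)$, hence an $F$-rational automorphism of $\SL_2$ stabilizing $\SL_2(F)$. A short computation, using the product formula so that $b\mapsto\kappa^{-1}b$ is measure preserving on $F\backslash\BA$, shows that $\varphi^{\kappa^{-1}}$ is again a cusp form and that $W_{\varphi^{\kappa^{-1}}}^\psi(g)=W_\varphi^{\psi_\kappa}(g^{\kappa^{-1}})$. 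Consequently $\pi^{[\kappa^{-1}]}:=\{\varphi^{\kappa^{-1}}:\varphi\in V_\pi\}$ is an irreducible cuspidal automorphic subrepresentation of $L^2_0(\SL_2(F)\backslash\SL_2(\BA))$ whose local component at $v$ is $\pi_v^{\kappa^{-1}}\cong\pi_v$; being a restricted tensor product whose factors are, place by place, isomorphic to those of $\pi$, it is abstractly isomorphic to $\pi$. Since $\SL_2$ has multiplicity one by Ramakrishnan \cite{Ra}, two isomorphic irreducible subrepresentations of $L^2_0$ occupy the same subspace, so $\{\varphi^{\kappa^{-1}}:\varphi\in V_\pi\}=V_\pi$. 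Now choose $\varphi_0\in V_\pi$ with $W_{\varphi_0}^{\psi_\kappa}\not\equiv 0$, possible since $\pi$ is globally $\psi_\kappa$-generic; then $\varphi_0^{\kappa^{-1}}\in V_\pi$ and $W_{\varphi_0^{\kappa^{-1}}}^\psi\not\equiv 0$, so $\pi$ is globally $\psi$-generic.

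The crux --- the only place where something nontrivial happens --- is Proposition 2.1, which pins down the local components of the $\kappa$-twist; this is also why the theorem here carries no restriction on $v$, unlike the $p$-adic results above. Everything else is routine, the two points to handle with care being the standard fact that local isomorphism at all places plus multiplicity one for $\SL_2$ upgrades to equality of subspaces of $L^2_0$, and keeping straight whether the twist produced is by $\kappa$ or $\kappa^{-1}$ --- hence the choice to twist by $\kappa^{-1}$ above.
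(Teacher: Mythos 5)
Your proposal is correct and follows essentially the same route as the paper: the easy direction by factorization of the Whittaker functional, and the converse via Corollary 4.2, Proposition 2.1 to identify the local components of the twist, the cusp form $\varphi\mapsto\varphi(g^{\kappa^{\pm 1}})$, and multiplicity one for $\SL_2$ to force equality of spaces. The only difference is cosmetic: you twist by $\kappa^{-1}$ to land directly on $\psi$-genericity, whereas the paper twists by $\kappa$ and then uses that $\psi_{\kappa^2}$-generic is the same as $\psi$-generic because $\kappa^2$ is a square.
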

\begin{proof}
A similar result is proved for $\RU(1,1)$ by Gelbart, Rogawski and Soudry, in Proposition 2.5, \cite{GeRS}.

One direction is trivial. Now we assume each $\pi_v$ is $\psi_v$-generic.

We assume $\pi$ is $\psi_\kappa$-generic for some $\kappa\in
F^\times$, i.e., there exists $\varphi\in V_\pi$ such that
$$W_\varphi^{\psi_\kappa}(g)=\int_{N(F)\setminus N(\BA)} \varphi(ng)\psi^{-1}_\kappa(n)dn\ne 0.$$
Then it is clear that $\pi_v$ is also $\psi_{\kappa,v}-generic$,
where $\psi_{\kappa,v}(a)=\psi_v(\kappa a)$. By the theorem in the
first section, we get $\pi_v\cong \pi_v^\kappa$.

For $\varphi\in V_\pi$, consider the function
$\varphi^\kappa(g)=\varphi(g^\kappa)$, where
$g^\kappa=\diag(\kappa,1)g \diag(\kappa^{-1},1)$. Then
\[
\int_{N(F)\setminus N(\BA)}\varphi^\kappa (ng)dn=\int_{N(F)\setminus
N(\BA)} \varphi((ng)^\kappa)dn
\]
\[=\int_{N(F)\setminus N(\BA)} \varphi(n^\kappa g^\kappa)dn=\int_{N(F)\setminus
N(\BA)}\varphi(ng^\kappa)dn=0
\]
hence $\varphi^\kappa$ is also a cusp form. Let $V_\pi^\kappa$ be
the space which consists functions of the form $\varphi^\kappa$ for
all $\varphi \in V_\pi$. Let $\pi^\kappa$ denote the cuspidal
automorphic representation of $\SL_2(\BA)$ on $V_\pi^\kappa$.

\textbf{Claim}: $(\pi^\kappa)_v=\pi_v^\kappa$.

\textit{proof of the claim:} Let $\Lambda: V_\pi\to \BC$ be a nonzero $\psi_\kappa$-Whittaker functional of $\pi$, and let $\Lambda_v$ be nonzero $(\psi_\kappa)_v$-Whittaker functional on $V_{\pi_v}$ satisfying that, if $\varphi=\otimes_v \varphi_v$ is a pure tensor, then 
\[
\Lambda(\pi(g)\varphi)=\prod_v \Lambda_v(\pi_v(g_v)\varphi_v)
\]
Note that $\Lambda$ is in fact given by 
\[
\Lambda(\varphi)=\int_{N(F)\setminus N(\BA)} \varphi(n)\psi_\kappa^{-1}(n)dn
\]
Then the $\psi_{\kappa^2}$-Whittaker functional of $\pi^\kappa$ is given by 
\[
\int_{N(F)\setminus N(\BA)} \varphi^\kappa (n)\psi^{-1}_{\kappa^2}(n)dn
\]
This means that if $W_\varphi(g)$ is a $\psi_\kappa$-Whittaker function of $\pi$, then $W_{\varphi^\kappa}(g)=W_\varphi(g^\kappa)$ is a $\psi_{\kappa^2}$-Whittaker function of $\pi^\kappa$. 

Hence with $\varphi=\otimes_v \varphi_v$ a pure tensor, we have $W_{\varphi}(g)=\prod_v W_{\varphi_v}(g_v)$, and $\{W_{\varphi_v}(g_v) \}$ is the Whittaker model of $\pi_v$. While $W_{\varphi^\kappa}(g)=W_{\varphi}(g^\kappa)= \prod_v W_{\varphi_v}(g_v^\kappa)$, and $\{W_{\varphi_v}(g_v^\kappa) \}$ is the Whittaker model of $(\pi^\kappa)_v$. 

Now $W_v(g_v)\to W_v(g_v^\kappa)$ gives an isomorphism between $\pi_v^k$ and $(\pi^\kappa)_v$, which proves the claim. 

Now let's continue the proof of the theorem, by the claim we then
have $\pi_v\cong (\pi^\kappa)_v$, or $\pi\cong \pi^\kappa$. By
multiplicity one theorem for $\SL_2$, see \cite{Ra}, we get $\pi=\pi^\kappa$. It is clear that
$\pi^\kappa$ is $\psi_{\kappa^2}$-generic, and hence $\psi$-generic.
The theorem follows.
\end{proof}

\subsection{Eisenstein series on $\widetilde \SL_2(\BA)$}
Now let $F$ be a number field, and $\BA$ be its adele ring. Let
$\widetilde \SL_2(\BA)$ be the double cover of $\SL_2(\BA)$. It is
well-known that the projection $\widetilde \SL_2(\BA)\ra \SL_2(\BA)$
factors through $\SL_2(F)$. Let $\mu_\psi$ be the genuine character of $T(F)\setminus \widetilde T(\BA)$ whose local components are $\mu_{\psi_v}$ given in $\S$2.

Let $\eta$ be a quasi-character of $F^\times \setminus \BA^\times$,
and $s\in \BC$, we consider the induced representation
$$I(s,\chi,\psi)=\Ind_{\widetilde B(\BA)}^{\widetilde \SL_2(\BA)}( \mu_\psi\eta_{s-1/2}).$$
For $f_s\in I(s,\eta,\psi)$, we consider the Eisenstein series $E(g,f_s)$ on $\widetilde \SL_2(\BA):$
$$E(g,f_s)=\sum_{B(F)\setminus \SL_2(F)} f_s(\gamma g),g\in \SL_2(\BA).$$
The above sum is absolutely convergent when $\Re(s)>>0$, and can be meromorphically continued to the whole $s$-plane.

There is an intertwining operator $M_s: I(s,\eta,\psi)\ra I(1-s, \eta^{-1},\psi):$
$$M_s(f_s)(g)=\int_{N(F)\setminus N(\BA)}f_s(wng)dn. $$
The above integral is absolutely convergent for $\Re(s)>>0$ and defines a meromorphic function of $s\in \BC$.

\begin{prop}
\begin{enumerate}
\item If $\eta^2\ne 1$, then the Eisenstein series $E(g,f_s)$ is holomorphic for all $s$. If $\eta^2=1$,
the only possible poles of $E(g,f_s)$ are at $s=0$ and $s=1$. Moreover, the order of the poles are at most 1.
\item We have the functional equation $$E(g,f_s)=E(g, M_s(f_s)), \textrm{ and } M_s(\eta)\circ M_{1-s}(\eta^{-1})=1.$$
\end{enumerate}
\end{prop}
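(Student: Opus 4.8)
The statement is the standard meromorphic continuation plus functional equation for the degenerate Eisenstein series on $\widetilde\SL_2(\BA)$ induced from $\mu_\psi\eta_{s-1/2}$; the plan is to reduce everything to the rank-one Langlands--Shahidi machinery, exactly as for $\SL_2$ or $\GL_2$, keeping track of the genuine character $\mu_\psi$. First I would recall the constant term computation: along the unipotent radical $N$, the constant term of $E(g,f_s)$ equals $f_s(g) + M_s(f_s)(g)$, where $M_s$ is the global intertwining operator written in the excerpt. The analytic behaviour of $E(g,f_s)$ is then governed by that of $M_s$, and $M_s$ factors as a product of the local intertwining operators $M_{s,v}$ over all places. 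For $v$ finite unramified, the Remark following the unramified calculation in $\S2.5$ already records $M_{s,v}(f_{s,v}^0) = \frac{L(2s-1,\eta_v^2)}{L(2s,\eta_v^2)} f_{1-s,v}^0$; the archimedean and ramified local factors are meromorphic and the normalized local operators are holomorphic and nonzero in the relevant range by the rank-one theory (Shahidi). Hence the global normalized operator is $\frac{L(2s-1,\eta^2)}{L(2s,\eta^2)}$ times a product of holomorphic, generically nonzero local maps.

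Next I would read off the poles. The completed Dirichlet $L$-function $L(2s,\eta^2)$ in the denominator has no zeros in $\Re(s)>1/2$ except, when $\eta^2=1$, the pole of $L(2s,\eta^2)=\zeta_F(2s)$ which sits in the denominator and therefore kills any potential pole there; the numerator $L(2s-1,\eta^2)$ contributes a possible pole only when $\eta^2=1$, at $2s-1=1$, i.e. $s=1$, and via the functional equation the reflected point $s=0$. When $\eta^2\neq 1$ the ratio $\frac{L(2s-1,\eta^2)}{L(2s,\eta^2)}$ is entire (the numerator is entire, the denominator has no zeros on the line $\Re(s)=1/2$ by the non-vanishing of Hecke $L$-functions on the edge), so $M_s$ and hence $E(g,f_s)$ is holomorphic in the half plane $\Re(s)\ge 1/2$, and then everywhere by the functional equation below. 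This gives part (1), with pole order at most $1$ because $\zeta_F$ has a simple pole.

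For part (2), the functional equation $E(g,f_s)=E(g,M_s(f_s))$ is the Langlands functional equation for Eisenstein series: both sides have the same constant term along $N$ (using $M_{1-s}\circ M_s = \mathrm{id}$, which I prove next) and the same cuspidal-support data, so they agree by the uniqueness in the spectral decomposition, or directly by unfolding. The cocycle relation $M_s(\eta)\circ M_{1-s}(\eta^{-1})=1$ is checked place by place: it holds for the unramified local operators by the explicit formula above since $\frac{L(2s-1,\eta_v^2)}{L(2s,\eta_v^2)}\cdot\frac{L(1-2s,\eta_v^{-2})}{L(2-2s,\eta_v^{-2})}=1$ by the local functional equation of the abelian $\gamma$-factor, and for the remaining finitely many places by the rank-one computation together with a Gindikin--Karpelevich normalization; one must be slightly careful that the genuine character $\mu_\psi$ satisfies $\mu_\psi\cdot\overline{\mu_\psi}=\chi$ (the Hilbert-symbol character), which is handled by the identity $\mu_{\psi}(a)\mu_\psi(b)=\mu_\psi(ab)(a,b)$ recorded in $\S2.2$, so the metaplectic twist contributes only this quadratic character and does not affect the $L$-factor bookkeeping.

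The main obstacle I anticipate is purely one of normalization and bookkeeping rather than of substance: one has to make sure the global intertwining operator is decomposed consistently with the chosen local sections (so that the product over unramified places converges and equals the advertised $L$-ratio), and that the metaplectic $\mu_\psi$ is carried correctly through the $w$-conjugation in $M_s$, in particular that $\mu_\psi$ is a genuine \emph{automorphic} character of $T(F)\backslash\widetilde T(\BA)$ — this is where one invokes Weil's reciprocity for the Hilbert symbol, $\prod_v (a,b)_v = 1$ for $a,b\in F^\times$. Once that is set up, continuation and the functional equation are the standard rank-one arguments and no genuinely new input is needed.
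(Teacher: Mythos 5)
The paper does not actually prove this proposition: its ``proof'' is the single citation to Proposition 6.1 of \cite{GQT}. Your proposal therefore attempts what the paper delegates to the literature, via the standard constant-term/Langlands machinery, and in outline that is the right machinery (constant term $f_s+M_s(f_s)$, unramified Gindikin--Karpelevich ratio $L(2s-1,\eta^2)/L(2s,\eta^2)$ as in the Remark of \S 2.5, genuineness of $\mu_\psi$ via Weil reciprocity and $\mu_\psi(a)\mu_\psi(b)=\mu_\psi(ab)(a,b)_F$). However, two steps as written are genuinely flawed. First, the relation $M_s(\eta)\circ M_{1-s}(\eta^{-1})=1$ cannot be ``checked place by place'' as you claim: at an unramified place the unnormalized operators compose to the scalar $\frac{L_v(2s-1,\eta_v^2)\,L_v(1-2s,\eta_v^{-2})}{L_v(2s,\eta_v^2)\,L_v(2-2s,\eta_v^{-2})}$, which is not identically $1$ (test $\eta_v$ trivial); there is no ``local functional equation of the abelian $\gamma$-factor'' making it so. The identity is a global one: it follows from the functional equation of the completed Hecke $L$-function $\Lambda(s,\eta^2)$, the $\varepsilon$-factors cancelling because $\eta^2(-1)=1$, or from Langlands' general theory; locally it holds only for operators normalized by $L$- and $\varepsilon$-factors.

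Second, your pole analysis is incorrect off the half-plane $\Re(s)\ge 1/2$. When $\eta^2\ne 1$ the ratio $L(2s-1,\eta^2)/L(2s,\eta^2)$ is not entire: the denominator has zeros in the critical strip, so $M_s$, hence the constant term $f_s+M_s(f_s)$, has potential poles at points with $0<\Re(s)<1/2$, and the step ``holomorphic for $\Re(s)\ge 1/2$, then everywhere by the functional equation'' does not go through as stated, because the functional equation transports exactly these poles of $M_s$ into $E(g,f_s)$. (Compare the classical spherical Eisenstein series on $\SL_2$, whose poles in $\Re(s)<1/2$ occur at the nontrivial zeros of $\zeta_F(2s)$; the unnormalized series is not holomorphic there.) So your argument controls the poles only in the region $\Re(s)\ge 1/2$ --- which happens to be all that the paper uses in Corollary 4.7 --- but it does not by itself yield the assertion for all $s$; obtaining the precise statement requires either a normalization of the Eisenstein series or the argument in the cited reference \cite{GQT}, and this is exactly the content the paper chose to quote rather than prove.
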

See Proposition 6.1 of \cite{GQT} for example.

\subsection{The global zeta integral}
Let $\psi$ be a nontrivial additive character of $F\setminus \BA$.
Then there is a global Weil representation representation
$\omega_\psi$ of $\widetilde \SL_2(\BA)$ on $\CS(\BA)$. For $\phi\in
\CS(\BA)$, we consider the theta series
$$\theta_{\psi}(\phi)(g)=\sum_{x\in F}(\omega_\psi(g)\phi)(x).$$
It is well-known that $\theta_\psi$ defines an automorphic form on $\widetilde \SL_2(\BA)$.

Let $(\pi,V_\pi)$ be a $\psi$-generic cuspidal automorphic
representation of $\SL_2(\BA)$. For $\varphi\in V_\pi, \phi\in
\CS(\BA)$ and $f_s\in I(s,\eta,\psi^{-1})$, we consider the integral
\begin{equation}
Z(\varphi,\theta_{\psi^{-1}}(\phi), E(\cdot, f_s))=\int_{\SL_2(F)\setminus \SL_2(\BA)}
\varphi(g)\theta_{\psi^{-1}}(\phi)(g)E(g,f_s)dg.
\end{equation}
\begin{prop}[Theorem 4.C of \cite{GPS2}]
For $\Re(s)>>0$, the integral $Z(\varphi,\theta_{\psi^{-1}}(\phi), E(\cdot, f_s))$ is absolutely convergent, and
$$Z(\varphi, \theta_{\psi^{-1}}(\phi), E(\cdot,f_s))=\int_{N(\BA)\setminus
\SL_2(\BA)} W_{\varphi}^{\psi}(g) (\omega_{\psi^{-1}}(g))\phi(1) f_s(g)dg,$$
where $W_{\varphi}^{\psi}(g)=\int_{N(F)\setminus N(\BA)}
\varphi(ng)\psi^{-1}(n)dn$ is the $\psi$-th Whittaker coefficient of
$\varphi$.
\end{prop}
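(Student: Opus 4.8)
The plan is the classical Rankin--Selberg unfolding, in three stages. First, for $\Re(s)\gg 0$ the Eisenstein series $E(g,f_s)=\sum_{\gamma\in B(F)\setminus\SL_2(F)}f_s(\gamma g)$ converges absolutely; since a cusp form is rapidly decreasing on $\SL_2(F)\setminus\SL_2(\BA)$ while $\theta_{\psi^{-1}}(\phi)$ and (for $\Re(s)\gg 0$) $E(\cdot,f_s)$ are of moderate growth, the triple integral converges absolutely, which both proves the convergence assertion and licenses interchanging the $\gamma$-sum with the integration. Because $\theta_{\psi^{-1}}(\phi)$, $E(\cdot,f_s)$ and $f_s$ are genuine, the relevant products descend to honest functions on $\SL_2(\BA)$, and unfolding the Eisenstein series collapses the domain:
\[
Z(\varphi,\theta_{\psi^{-1}}(\phi),E(\cdot,f_s))=\int_{B(F)\setminus\SL_2(\BA)}\varphi(g)\,\theta_{\psi^{-1}}(\phi)(g)\,f_s(g)\,dg .
\]
Here I use that $f_s$ is left $B(F)$-invariant: its inducing character is trivial on $N(\BA)$ and equals $\mu_{\psi^{-1}}\eta_{s-1/2}$ on $\widetilde T$, which is trivial on $T(F)$ because $\mu_\psi$ is, by definition, a character of $T(F)\setminus\widetilde T(\BA)$ and $\eta$ is an idele-class quasicharacter.

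Second, I would insert the theta expansion $\theta_{\psi^{-1}}(\phi)(g)=\sum_{\xi\in F}(\omega_{\psi^{-1}}(g)\phi)(\xi)$ and isolate the $\xi=0$ term. The function $g\mapsto(\omega_{\psi^{-1}}(g)\phi)(0)f_s(g)$ is left $N(\BA)$-invariant, so writing $\SL_2(\BA)$ in Iwasawa coordinates and integrating over $N(F)\setminus N(\BA)$ first produces the constant term $\int_{N(F)\setminus N(\BA)}\varphi(ng)\,dn$, which vanishes by cuspidality of $\varphi$; hence only $\xi\in F^\times$ survives. For those, the Weil formula $(\omega_{\psi^{-1}}(t(a))\Phi)(1)=|a|^{1/2}\mu_{\psi^{-1}}(a)\Phi(a)$ gives $(\omega_{\psi^{-1}}(g)\phi)(a)=|a|^{-1/2}\mu_{\psi^{-1}}(a)^{-1}(\omega_{\psi^{-1}}(t(a)g)\phi)(1)$, and the prefactor equals $1$ for $a\in F^\times$ by the product formula for the idele norm and the triviality of $\mu_{\psi^{-1}}$ on $T(F)$; therefore
\[
\sum_{\xi\in F^\times}(\omega_{\psi^{-1}}(g)\phi)(\xi)=\sum_{a\in F^\times}(\omega_{\psi^{-1}}(t(a)g)\phi)(1).
\]
Because $\varphi$ and $f_s$ are left $T(F)$-invariant and $g\mapsto\varphi(g)(\omega_{\psi^{-1}}(g)\phi)(1)f_s(g)$ is left $N(F)$-invariant, absorbing $t(a)$ into $\varphi$ and $f_s$ and summing over $a\in F^\times$ (a set of representatives for $N(F)\setminus B(F)$) unfolds the domain from $B(F)\setminus\SL_2(\BA)$ to $N(F)\setminus\SL_2(\BA)$:
\[
Z(\varphi,\theta_{\psi^{-1}}(\phi),E(\cdot,f_s))=\int_{N(F)\setminus\SL_2(\BA)}\varphi(g)\,(\omega_{\psi^{-1}}(g)\phi)(1)\,f_s(g)\,dg .
\]

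Finally, I would factor $N(F)\setminus\SL_2(\BA)$ through $N(F)\setminus N(\BA)$ and integrate over the latter first. From $(\omega_{\psi^{-1}}(n(b)g)\phi)(1)=\psi^{-1}(b)(\omega_{\psi^{-1}}(g)\phi)(1)$ for $b\in\BA$ and the left $N(\BA)$-invariance of $f_s$, the inner integral collapses to $\int_{N(F)\setminus N(\BA)}\varphi(ng)\psi^{-1}(n)\,dn=W_\varphi^\psi(g)$, which is the asserted identity. The only point beyond routine bookkeeping is the metaplectic compatibility invoked repeatedly above: that the local Weil indices $\gamma(\psi_v)$, equivalently the characters $\mu_{\psi_v}$, patch into a genuine automorphic character trivial on $T(F)$. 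This is Hilbert-symbol reciprocity (the product formula for local Weil indices), and it is exactly what makes both the $B(F)$-invariance of $f_s$ and the reindexing of the $\xi\neq0$ theta sum clean; everything else is the standard Rankin--Selberg manipulation, valid for $\Re(s)\gg 0$ by the absolute convergence recorded at the outset.
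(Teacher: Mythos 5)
Your unfolding is correct and is exactly the standard Rankin--Selberg argument that the paper itself does not reproduce but delegates to the cited Theorem 4.C of \cite{GPS2} (Method C): unfold $E(g,f_s)$ for $\Re(s)\gg 0$, expand the theta series, kill the $\xi=0$ term by cuspidality, reindex the $\xi\in F^\times$ terms via the Weil-representation torus action (using the product formula for $|\cdot|$ and Weil-index/Hilbert reciprocity so that $\mu_{\psi^{-1}}$ is trivial on $T(F)$), and collapse the $N(F)\setminus N(\BA)$ integration into the Whittaker coefficient $W_\varphi^\psi$. Since the paper offers only the citation, your write-up supplies the intended proof and handles the two genuinely delicate points (genuineness/descent of the products and the metaplectic splitting over $\SL_2(F)$) correctly, so nothing further is needed.
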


\begin{cor}
We take $\phi=\otimes_v \phi_v, f_s=\otimes f_{s,v}$ to be pure tensors. Let $S$ be a
finite set of places such that for all $v\notin S$, then $v$ is
finite and $\pi_v,\psi_v,f_{s,v}$ are unramified, then for
$\Re(s)>>0$, we have
$$Z(\varphi, \theta_{\psi^{-1}}(\phi), E(\cdot, f_s))=\prod_{v\in S}\Psi(W_v, \phi_v, f_{s,v})
\frac{L^S(s, \pi, St\otimes (\chi \eta))}{L^S(2s, \eta^2)},$$
where $\chi$ is the character of $F^\times \setminus \BA^\times$ defined by
$$\chi((a_v))=\prod_v (a_v,-1)_{F_v}, (a_v)_v \in \BA^\times.$$
Moreover, we have the following functional equation
\[
Z(\varphi, \theta_{\psi^{-1}}(\phi), E(\cdot, f_s))=Z(\varphi,
\theta_{\psi^{-1}}(\phi), E(\cdot, M_s(f_s)))
\]
\end{cor}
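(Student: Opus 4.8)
The plan is to start from the unfolded expression for the global integral furnished by the Proposition above (Theorem 4.C of \cite{GPS2}),
\[
Z(\varphi,\theta_{\psi^{-1}}(\phi),E(\cdot,f_s))=\int_{N(\BA)\setminus \SL_2(\BA)} W_{\varphi}^{\psi}(g)\,(\omega_{\psi^{-1}}(g)\phi)(1)\,f_s(g)\,dg,
\]
valid for $\Re(s)\gg 0$. Since $\varphi=\otimes_v\varphi_v$, $\phi=\otimes_v\phi_v$ and $f_s=\otimes_v f_{s,v}$ are pure tensors, the $\psi$-Whittaker coefficient factors as $W_{\varphi}^{\psi}(g)=\prod_v W_v(g_v)$ with $W_v\in\CW(\pi_v,\psi_v)$, the global Weil representation factors so that $(\omega_{\psi^{-1}}(g)\phi)(1)=\prod_v(\omega_{\psi_v^{-1}}(g_v)\phi_v)(1)$, and $N(\BA)\setminus\SL_2(\BA)$ is the restricted direct product of the local quotients $N(F_v)\setminus\SL_2(F_v)$. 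Hence for $\Re(s)\gg 0$ the adelic integral decomposes as an Euler product
\[
Z(\varphi,\theta_{\psi^{-1}}(\phi),E(\cdot,f_s))=\prod_v \Psi(W_v,\phi_v,f_{s,v}).
\]

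Next I would evaluate the factors at the places $v\notin S$. There $\pi_v,\psi_v,f_{s,v}$ are unramified, and (enlarging $S$ if necessary so that every such $v$ has odd residue characteristic) $\mu_{\psi_v}$ is unramified as well by \cite{Sz}; one may then take $W_v$ spherical normalized by $W_v(e)=1$, $\phi_v$ the characteristic function of $\CO_{F_v}$, and $f_{s,v}$ the normalized spherical section. The unramified computation of $\S$2.5 then gives
\[
\Psi(W_v,\phi_v,f_{s,v})=\frac{L(s,\pi_v,St\otimes\eta_v\chi_v)}{L(2s,\eta_v^2)},
\]
where $\chi_v(a)=(a,-1)_{F_v}$ is precisely the local component at $v$ of the idele class character $\chi$. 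Taking the product over $v\notin S$, which converges for $\Re(s)\gg 0$ by the usual crude bounds on the Satake parameters and the resulting estimate on unramified local $L$-factors, produces $L^S(s,\pi,St\otimes(\chi\eta))/L^S(2s,\eta^2)$, and multiplying by the finite product over $v\in S$ yields the asserted identity
\[
Z(\varphi,\theta_{\psi^{-1}}(\phi),E(\cdot,f_s))=\prod_{v\in S}\Psi(W_v,\phi_v,f_{s,v})\,\frac{L^S(s,\pi,St\otimes(\chi\eta))}{L^S(2s,\eta^2)}.
\]

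For the functional equation I would argue directly from the definition (4.1) of $Z$: by part (2) of the Proposition on Eisenstein series stated above, $E(g,f_s)=E(g,M_s(f_s))$ for all $g\in\SL_2(\BA)$, so substituting this inside the integral defining $Z$ gives
\[
Z(\varphi,\theta_{\psi^{-1}}(\phi),E(\cdot,f_s))=Z(\varphi,\theta_{\psi^{-1}}(\phi),E(\cdot,M_s(f_s))),
\]
first for $\Re(s)\gg 0$ and then for all $s$ by meromorphic continuation of both sides.

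The step I expect to be the main obstacle is the rigorous justification of the Euler factorization in the first paragraph: one must verify that the adelic integral of the product of local integrands genuinely equals the product of the local integrals, which reduces to absolute convergence of the whole expression together with convergence of the tail product over $v\notin S$. This is handled by the standard argument --- outside $S$ the local integrand is the normalized spherical one, whose local integral is $1+O(q_v^{-\Re(s)+c})$, so dominated convergence applies once $\Re(s)$ is large --- and it is exactly here that one leans on the unramified calculation of $\S$2.5 together with the convergence ranges for the Eisenstein series, the theta integral and the global zeta integral established above.
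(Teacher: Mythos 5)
Your argument is correct and follows exactly the route the paper indicates: unfold the global integral via the Gelbart--Piatetski-Shapiro proposition, factor the pure-tensor integral into an Euler product, evaluate the places outside $S$ by the unramified computation of $\S 2.5$, and obtain the functional equation by substituting $E(g,f_s)=E(g,M_s(f_s))$ from the Eisenstein series proposition. Your added care about convergence of the tail product and about odd residue characteristic (so that $\mu_{\psi_v}$ is unramified) only makes explicit what the paper leaves implicit.
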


This follows from Proposition 4.2, the unramified calculation and
the functional equation of Eisenstein series in Proposition 4.4
directly.

\begin{cor}
\begin{enumerate}
\item The partial $L$-function $L^S(s,\pi,St\otimes \chi\eta)$ can be extended to a meromorphic of $s$.
\item If $\eta^2\ne 1$, then $L^S(s,\pi, St\otimes \chi\eta)$ is holomorphic for $\Re(s)>1/2$.
\item If $\eta^2=1$, then on the region $\Re(s)>1/2$, the only possible pole of
$L^S(s,\pi, St\otimes \chi \eta)$ is at $s=1$. Moreover, the order of the pole of
$L^S(s,\pi, St\otimes (\chi \eta))$ at $s=1$ is at most $1$.
\end{enumerate}
\end{cor}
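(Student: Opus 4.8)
The plan is to extract the analytic properties of the partial $L$-function directly from the analytic behavior of the Eisenstein series via the formula in Corollary 4.6. Recall that Corollary 4.6 gives, for $\Re(s)\gg 0$,
\[
Z(\varphi, \theta_{\psi^{-1}}(\phi), E(\cdot, f_s))=\prod_{v\in S}\Psi(W_v, \phi_v, f_{s,v})\,\frac{L^S(s, \pi, St\otimes (\chi \eta))}{L^S(2s, \eta^2)}.
\]
First I would isolate the unramified denominator and rewrite this as
\[
L^S(s,\pi,St\otimes \chi\eta)=\frac{L^S(2s,\eta^2)\, Z(\varphi, \theta_{\psi^{-1}}(\phi), E(\cdot, f_s))}{\prod_{v\in S}\Psi(W_v,\phi_v,f_{s,v})},
\]
valid first for $\Re(s)\gg 0$. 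The left side is a Dirichlet series/Euler product that a priori only converges in a half-plane; the point is that the right side is a ratio of functions each of which we understand globally.

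For part (1), meromorphic continuation: the global zeta integral $Z(\varphi, \theta_{\psi^{-1}}(\phi), E(\cdot, f_s))$ extends meromorphically to all of $s\in\BC$ because $E(g,f_s)$ does by Proposition 4.4, and $\varphi$, $\theta_{\psi^{-1}}(\phi)$ are of moderate growth so the integral over the (non-compact but tame) quotient $\SL_2(F)\setminus \SL_2(\BA)$ inherits the continuation; the Hecke $L$-function $L^S(2s,\eta^2)$ is classically meromorphic; and each local factor $\Psi(W_v,\phi_v,f_{s,v})$ is a rational function of $q_v^{-s}$ (from \cite{GPS2}, sections 5 and 12, quoted in the excerpt), hence meromorphic. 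One must choose the data $\varphi,\phi_v,f_{s,v}$ at the ramified places $v\in S$ so that the local integrals $\Psi(W_v,\phi_v,f_{s,v})$ are not identically zero — this is exactly the kind of nonvanishing one arranges by the standard argument with Howe vectors / Kirillov-model test vectors, and archimedean nonvanishing is Lemma 4.9 as flagged in the introduction. With such a choice the displayed ratio defines the meromorphic continuation of $L^S(s,\pi,St\otimes\chi\eta)$.

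For parts (2) and (3), I would restrict attention to the region $\Re(s)>1/2$. On this region $L^S(2s,\eta^2)$ is the Hecke $L$-function in the region $\Re(2s)>1$, where it is holomorphic and nonvanishing, so it contributes neither poles nor zeros; the local factors $\Psi(W_v,\phi_v,f_{s,v})$ likewise can be normalized to be holomorphic and nonzero on $\Re(s)>1/2$ by a suitable choice of sections (their possible poles are controlled and can be avoided, or one divides through — this is the one place requiring a little care). Hence on $\Re(s)>1/2$ the poles of $L^S(s,\pi,St\otimes\chi\eta)$ are precisely the poles of $E(g,f_s)$, read off from Proposition 4.4: if $\eta^2\neq 1$ then $E(g,f_s)$ is entire, giving holomorphy in (2); if $\eta^2=1$ the only pole of $E(g,f_s)$ in $\Re(s)>1/2$ is the simple pole at $s=1$, and the pole of $L^S(s,\pi,St\otimes\chi\eta)$ at $s=1$ is therefore at most simple, giving (3).

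The main obstacle I anticipate is the bookkeeping at the finite set $S$: one needs simultaneously (i) a choice of test data making $\prod_{v\in S}\Psi(W_v,\phi_v,f_{s,v})$ nonzero for the continuation argument, and (ii) control of the poles and zeros of these same local factors on $\Re(s)>1/2$ so they do not manufacture spurious poles or cancel genuine ones in $L^S$. Handling the archimedean places is where Lemma 4.9 is invoked; at the non-archimedean places of $S$ one uses that $\Psi(W_v,\phi_v,f_{s,v})$ is a rational function of $q_v^{-s}$ whose denominator is controlled, and chooses $\phi_v$ supported near $0$ together with a Whittaker vector $W_v$ so that the integral becomes, up to a unit, a monomial in $q_v^{-s}$ — an honest but routine computation in the style of Lemma 3.9. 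Once that local input is in place, parts (1)-(3) follow formally from Proposition 4.4 and the displayed identity.
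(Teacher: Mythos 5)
Your proposal is correct and is essentially the paper's own argument: the paper likewise deduces all three parts from Proposition 4.4 and Corollary 4.6, reducing everything to nonvanishing of the local integrals at the places of $S$, quoting Eq.\ (3.2) (the Howe-vector computation, which gives a nonzero constant) at the finite places and Lemma 4.9 at the archimedean ones. The only cosmetic difference is that the paper argues pointwise --- for each fixed $s$ it chooses data with $\Psi(W_v,\phi_v,f_{s,v})\neq 0$ at that $s$ --- rather than seeking sections holomorphic and nonvanishing on all of $\Re(s)>1/2$, which is all your argument actually needs.
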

\begin{proof}
By Proposition 4.4 and Corollary 4.6, it suffices to show that, for each place $v$, and for any fixed point $s\in \BC$ we can choose datum $(W_v,\phi_v,f_{s,v})$ such that $\Psi(W_v,\phi_v, f_{s,v})\ne 0$. If $v$ is non-archimedean, this is proved in the proof of Theorem 3.10, see Eq.(3.2).  We will prove the general case later, see Lemma 4.9.
\end{proof}

\subsection{A strong multiplicity one theorem} With the above preparation, we are now ready to prove the main
global result of this paper.

\begin{thm}
Let $\pi=\otimes \pi_v$ and $\pi'=\otimes \pi'_v$ be two irreducible cuspidal automorphic representation of
$\SL_2(\BA)$ with the same central character. Suppose that $\pi$ and $\pi'$ are both $\psi$-generic.
Let $S$ be a finite set of \textbf{finite} places such that no place in $S$ is above $2$.
If $\pi_v\cong \pi'_v$ for all $v\notin S$, then $\pi =\pi'$.
\end{thm}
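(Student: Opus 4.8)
The plan is to run the Rankin--Selberg argument of Casselman and Baruch (Theorem 7.2.13 of \cite{Ba2}, Theorem 2 of \cite{Ca}): reduce the global equality $\pi=\pi'$ to the local statements $\pi_v\cong\pi'_v$ for $v\in S$, each of which will follow from the local converse theorem, Theorem 3.10(1), once we know that all twisted local $\gamma$-factors agree at the places of $S$.

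First, since $\pi$ and $\pi'$ are globally $\psi$-generic, Theorem 4.3 shows that $\pi_v$ and $\pi'_v$ are $\psi_v$-generic at every place $v$, and the hypothesis on central characters gives $\omega_{\pi_v}=\omega_{\pi'_v}$. Fix a Hecke character $\eta$ of $F^\times\backslash\BA^\times$ and let $S'$ be a finite set of places containing $S$, containing all archimedean places, and large enough that outside $S'$ everything ($\pi$, $\pi'$, $\psi$, $\eta$, the chosen sections) is unramified. Unfolding the global zeta integral by Proposition 4.5 and Corollary 4.6, then applying the local functional equation of \S2.4 at the places of $S'$ and the unramified computation of $M_s$ on spherical sections (\S2.5) outside $S'$, the functional equation $Z(\varphi,\theta_{\psi^{-1}}(\phi),E(\cdot,f_s))=Z(\varphi,\theta_{\psi^{-1}}(\phi),E(\cdot,M_s f_s))$ of Corollary 4.6 becomes, after dividing by the common factor $\prod_{v\in S'}\Psi(W_v,\phi_v,f_{s,v})$,
\[
\prod_{v\in S'}\gamma(s,\pi_v,\eta_v,\psi_v)=\frac{L^{S'}(s,\pi,St\otimes\chi\eta)\,L^{S'}(2-2s,\eta^{-2})}{L^{S'}(1-s,\pi,St\otimes\chi\eta^{-1})\,L^{S'}(2s-1,\eta^{2})},
\]
together with the analogous identity for $\pi'$. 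For the division to be legitimate one must choose the local integrals $\Psi(W_v,\phi_v,f_{s,v})$ not identically zero; at the finite places of $S'$ this is the computation in the proof of Theorem 3.10 (see Eq.(3.2)), and at the archimedean places it is exactly Lemma 4.9.

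The right-hand side of the displayed identity involves $\pi$ only through its local components at places outside $S'$, hence outside $S$, where $\pi_v\cong\pi'_v$; so the right-hand sides for $\pi$ and $\pi'$ coincide, and we obtain $\prod_{v\in S'}\gamma(s,\pi_v,\eta_v,\psi_v)=\prod_{v\in S'}\gamma(s,\pi'_v,\eta_v,\psi_v)$ for every $\eta$. Each $\gamma$-factor is a nonzero meromorphic function of $s$, and for $v\in S'\setminus S$ (archimedean or auxiliary finite places) we have $\pi_v\cong\pi'_v$, so those factors cancel, leaving $\prod_{v\in S}\gamma(s,\pi_v,\eta_v,\psi_v)=\prod_{v\in S}\gamma(s,\pi'_v,\eta_v,\psi_v)$ for all $\eta$. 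To isolate a given $v_0\in S$, fix an arbitrary quasi-character $\eta_{v_0}^{0}$ of $F_{v_0}^\times$; by the stability statement Theorem 3.10(2) (applicable since every place of $S$ has odd residue characteristic) there is, for each $v\in S\setminus\{v_0\}$, an integer $l_v$ with $\gamma(s,\pi_v,\eta_v,\psi_v)=\gamma(s,\pi'_v,\eta_v,\psi_v)$ whenever $\cond(\eta_v)>l_v$. Choose a Hecke character $\eta$ with $v_0$-component $\eta_{v_0}^{0}$ and with $\cond(\eta_v)>l_v$ for $v\in S\setminus\{v_0\}$; its behaviour at the remaining finite places is irrelevant, as they may be absorbed into $S'$ where $\pi_v\cong\pi'_v$, and such an $\eta$ exists by the usual construction of ray-class characters with prescribed local behaviour. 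Cancelling the equal factors at $v\in S\setminus\{v_0\}$ yields $\gamma(s,\pi_{v_0},\eta_{v_0}^{0},\psi_{v_0})=\gamma(s,\pi'_{v_0},\eta_{v_0}^{0},\psi_{v_0})$ for every $\eta_{v_0}^{0}$.

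Finally, at each $v_0\in S$ we apply Theorem 3.10(1) to conclude $\pi_{v_0}\cong\pi'_{v_0}$; here one first reduces to the case of an unramified additive character (as required by Theorem 3.10) by the harmless twist replacing $\psi_{v_0}$ by $\psi_{v_0,a}$ and $\pi_{v_0},\pi'_{v_0}$ by $\pi_{v_0}^{a},(\pi'_{v_0})^{a}$ for a suitable $a\in F_{v_0}^\times$, under which the twisted $\gamma$-factors transform by a common factor depending only on $a$, $\eta_{v_0}$ and the shared central character. Combined with $\pi_v\cong\pi'_v$ for $v\notin S$, this gives $\pi_v\cong\pi'_v$ at every place, hence $\pi\cong\pi'$ as abstract representations; by multiplicity one for $\SL_2$ (\cite{Ra}) the two spaces of cusp forms must then coincide, i.e.\ $\pi=\pi'$. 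The main obstacle is the middle step: coaxing the global functional equation into a form in which $\prod_{v\in S}\gamma(s,\pi_v,\eta_v,\psi_v)$ appears as a ratio of partial $L$-functions built only from the data away from $S$ --- which rests on the non-vanishing of the bad local integrals, the archimedean case being precisely Lemma 4.9 --- together with the separation of the factors at the places of $S$, for which both the stability of $\gamma$-factors (Theorem 3.10(2)) and enough flexibility in the choice of global Hecke characters are essential.
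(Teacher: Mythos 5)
Your proposal is essentially the paper's own argument: unfold the global zeta integral, use its functional equation together with the nonvanishing of the bad local integrals (Eq.\ (3.2) and Lemma 4.9) to equate the products of $\gamma$-factors over the bad places for $\pi$ and $\pi'$, then isolate each $v_0\in S$ via stability (Theorem 3.10(2)) and a Hecke character with prescribed component at $v_0$ and high conductor elsewhere in $S$ (the paper cites Lemma 12.5 of \cite{JL} for this), apply the local converse theorem (Theorem 3.10(1)), and finish with multiplicity one from \cite{Ra}. The only notable difference is presentational — you package the identity as a ratio of partial $L$-functions and explicitly note the reduction to an unramified $\psi_{v_0}$ by twisting, a point the paper passes over silently — so the proof stands as a faithful version of the paper's.
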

\begin{proof}
For $\phi=\otimes_v \phi_v\in V_{\pi}$, consider $Z(\varphi,
\theta_{\psi^{-1}}(\phi), E(\cdot, f_s))$. By Proposition 4.5, we
have
\[
Z(\varphi, \theta_{\psi^{-1}}(\phi), E(\cdot, f_s))=\prod_v
\Psi(W_v, \phi_v, f_{s,v})
\]
\[=\prod_v\int_{N(F_v)\setminus
\SL_2(F_v)}W_v(h)(\omega_{\psi^{-1}}(h)\phi_v)(1)f_{s,v}(h)dh
\]

By functional equation in Corollary 4.6, we have
\[
\prod_v\int_{N(F_v)\setminus
\SL_2(F_v)}W_v(h)(\omega_{\psi^{-1}}(h)\phi_v)(1)f_{s,v}(h)dh
\]
\[=\prod_v\int_{N(F_v)\setminus
\SL_2(F_v)}W_v(h)(\omega_{\psi^{-1}}(h)\phi_v)(1) M_s(f_{s,v})(h)dh
\]

On archimedean places, by Lemma 4.9 below, we can choose datum so
that the archimedean local integrals are nonzero. While on the
finite places, by local functional equation discussed in section
1.4, there are local gamma factors $\gamma(s,\pi_v,\eta_v,\psi_v)$
such that
\[
\int_{N(F_v)\setminus
\SL_2(F_v)}W_v(h)(\omega_{\psi^{-1}}(h)\phi_v)(1)
M_s(f_{s,v})(h)dh=
\]
\[\gamma(s,\pi_v,\eta_v,\psi_v)
\int_{N(F_v)\setminus
\SL_2(F_v)}W_v(h)(\omega_{\psi^{-1}}(h)\phi_v)(1)f_{s,v}(h)dh
\]
Hence we have
\[
1=\prod_{v<\infty}\gamma(s,\pi_v,\eta_v,\psi_v) \prod_{v|\infty}
\frac{\Psi(W_v, \phi_v, M_s(f_{s,v}))}{\Psi(W_v, \phi_v, f_{s,v})}
\]
Similarly, we have for $\pi'$ the identity
\[
1=\prod_{v<\infty}\gamma(s,\pi'_v,\eta_v,\psi_v) \prod_{v|\infty}
\frac{\Psi(W'_v, \phi_v, M_s(f_{s,v}))}{\Psi(W'_v, \phi_v, f_{s,v})}
\]
By assumptions that $\pi_v\cong \pi'_v$ for all $v\notin S$ with $S$
doesn't have archimedean places, we then have
\[
\prod_{v\in S}\gamma(s,\pi_v,\eta_v,\psi_v)=\prod_{v\in
S}\gamma(s,\pi'_v,\eta_v,\psi_v)
\]
Fix $v_0\in S$, by Lemma 12.5 in \cite{JL}, given an arbitrary
character $\eta_{v_0}$, we can find a character $\eta$ of
$\BA^{\times}$ which restricts to $v_0$ is $\eta_{v_0}$ and has
arbitrary high conductor at other places of $S$. By Theorem 3.10
(2), we conclude that
\[
\gamma(s,\pi_{v_0},\eta_{v_0},\psi_{v_0})=\gamma(s,\pi'_{v_0},\eta_{v_0},\psi_{v_0})
\]
for all characters $\eta_{v_0}$. Thus by Theorem 3.10 (1), we
conclude that $\pi_{v_0}\cong \pi'_{v_0}$. This applies also to
other places of $S$. Thus we proved that $\pi_v\cong \pi'_v$ for all
places $v$. Now the theorem follows from the multiplicity one theorem for
$\SL_2$, \cite{Ra}.
\end{proof}
\noindent \textbf{Remark:} We expect that the restriction on the finite set $S$ in Theorem 4.8 can be removed. \\

Finally, we prove a nonvanishing result on archimedean local zeta
integrals which is used in the above proof. We formulate and prove
the result both for $p$-adic and archimedean cases simultaneously.

\begin{lem}
Let $F$ be a local field,  $\psi$ be a nontrivial additive character
of $F$, $\eta$ be a quasi-character of $F^\times$ and $\pi$ be a
$\psi$-generic representation of $\SL_2(F)$. Then there exists $W\in
\CW(\pi, \psi), \phi\in \CS(F)$ and $f_s\in \Ind_{\widetilde
B}^{\widetilde \SL_2(F)}$ such that
$$\Psi(W,\phi,f_s)=\int_{N(F)\setminus \SL_2(F)} W(h)(\omega_{\psi^{-1}}\phi)(h) f_s(h)\ne 0.$$
\end{lem}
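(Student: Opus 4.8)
The plan is to exhibit data for which the integral is nonzero in the range $\Re(s)\gg 0$ where it converges absolutely; this already shows that the meromorphic function $s\mapsto\Psi(W,\phi,f_s)$ is not identically zero, which is exactly what is used in Corollary 4.7 and Theorem 4.8. When $F$ is non-archimedean this is in fact the content of the computation (3.2) in the proof of Theorem 3.10, where the Howe vector $v_m$, the function $\phi^m$ and the section $f_s^i$ give $\Psi=q^{-3i-m}$ for \emph{every} $s$; the argument below is a uniform version of that computation.

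First I would unfold $\Psi$ over the open dense big cell $N(F)\backslash N(F)T(F)\bar N(F)\cong T(F)\times\bar N(F)$, with the quotient measure $|a|^{-2}\,d\bar n\,da$ as in Section 3. I would take $W\in\CW(\pi,\psi)$ with $W(1)=1$ (possible since $\pi$ is $\psi$-generic) and choose $f_s\in I(s,\eta,\psi^{-1})$ supported inside the big cell of the same shape as the $f_s^i$ of Lemma 3.8 but with the characteristic function replaced by a fixed smooth non-negative function $\alpha$ on $F$ with $\alpha(0)>0$ and support a small neighbourhood of $0$; such an $f_s$ vanishes identically near the complementary $\widetilde B$-coset and is therefore a genuine smooth section. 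Unwinding the action of $\omega_{\psi^{-1}}$ as in the proof of Theorem 3.10 (but keeping $W$ and $\phi$ general) then gives
$$\Psi(W,\phi,f_s)=\int_{\bar N(F)}\alpha(\bar n)\,G(\bar n)\,d\bar n,\qquad G(\bar n)=\int_{F^\times}W(t(a)\bar n)\,(\omega_{\psi^{-1}}(\bar n)\phi)(a)\,\chi(a)\,\eta(a)\,|a|^{s-1}\,da,$$
where $\chi(a)=(a,-1)_F$, and for $\Re(s)\gg 0$ all the integrals converge absolutely. Next I would make $G(1)\neq 0$: here $\omega_{\psi^{-1}}(\bar n)\phi=\phi$ at $\bar n=1$, so $G(1)=\int_{F^\times}W(t(a))\,\phi(a)\,\chi(a)\,\eta(a)\,|a|^{s-1}\,da$ is a Mellin-type integral whose integrand at $a=1$ equals $W(1)\chi(1)\eta(1)=1$. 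Since $a\mapsto W(t(a))\chi(a)\eta(a)|a|^{s-1}$ is continuous, I would choose $\phi\in\CS(F)$ non-negative, supported in a small neighbourhood of $1$ (bounded away from $0$), with $\phi(1)>0$ and support small enough that this integrand has real part $>1/2$ there; then $\Re\,G(1)\ge\tfrac12\int\phi>0$.

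Finally I would show $G$ is continuous at $\bar n=1$ and conclude. By continuity of $W$ and strong continuity of the Weil representation on $\CS(F)$ we have $W(t(a)\bar n)\to W(t(a))$ and $\omega_{\psi^{-1}}(\bar n)\phi\to\phi$ (in $\CS(F)$, hence pointwise) as $\bar n\to 1$; uniform Schwartz bounds for $\{\omega_{\psi^{-1}}(\bar n)\phi\}$ over a compact neighbourhood of $1$, together with the standard locally uniform majorant for $|W(t(a)\bar n)|$ (moderate growth as $|a|\to 0$, rapid decay as $|a|\to\infty$), furnish an integrable dominating function when $\Re(s)\gg 0$, so $G(\bar n)\to G(1)$ by dominated convergence. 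Hence there is a neighbourhood $U$ of $1$ in $\bar N(F)$ on which $|G(\bar n)-G(1)|<\tfrac12|G(1)|$; taking $\alpha$ supported in $U$ gives, for a suitable unit scalar $u$, that $\Re\big(u\,\Psi(W,\phi,f_s)\big)=\int_U\alpha(\bar n)\,\Re\big(u\,G(\bar n)\big)\,d\bar n\ge\tfrac12|G(1)|\int\alpha>0$, so $\Psi(W,\phi,f_s)\neq 0$.

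The formal part (unfolding over the big cell, choosing $W$ and the bump section $f_s$, forcing the Mellin integral to be nonzero) is routine. The one genuinely analytic point is the continuity-under-the-integral-sign step in the archimedean case, which rests on the well-known uniform growth/decay estimates for Whittaker functions on $\SL_2(\BR)$ and $\SL_2(\BC)$ and on continuity of the Weil representation on Schwartz space; for $F$ non-archimedean everything collapses to local constancy and to the computation (3.2) already carried out.
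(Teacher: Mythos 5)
Your construction is essentially correct, and at the nonarchimedean places it is in substance the same localization as the paper's computation; but at the archimedean places it takes a genuinely different route. The paper never works with an arbitrary Whittaker function: it first produces a $W$ whose restriction to the torus has small compact support near $a=1$ --- via Howe vectors (Corollary 3.4) in the $p$-adic case, and in the archimedean case by realizing $\pi$ as a quotient of a principal series and taking a section supported in the big cell, so that the relevant Whittaker values are essentially the Fourier transform of a Schwartz function --- and only then localizes the induced section in the remaining unipotent variable. The payoff is that the whole integrand is supported in a fixed compact subset of the open cell, so the integral converges and the positivity argument applies at \emph{every} fixed $s\in\BC$. You instead keep $W$ general with $W(1)=1$, localize through $\phi$ and the bump $\alpha$, and restore control by dominated convergence; this is arguably cleaner (no Howe vectors or principal-series embedding, a uniform treatment of both cases, and in the $p$-adic case local constancy even gives the conclusion at every fixed $s$ without the Section 3 hypotheses), at the cost of invoking the standard locally uniform moderate-growth/rapid-decay majorants for $W(t(a)\bar n)$, which you correctly single out as the one genuinely analytic ingredient; the verification that the bump section is a genuine smooth section is at the same level of rigor as the paper's own construction.

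The one real shortfall is archimedean and concerns the quantifier on $s$: because $\omega_{\psi^{-1}}(\bar n)\phi$ spreads the support of $\phi$ over all of $F$, your majorization genuinely requires $\Re(s)\gg 0$, so you obtain nonvanishing only in a right half-plane, hence only that $s\mapsto\Psi(W,\phi,f_s)$ is not identically zero after continuation. That is enough for Theorem 4.8 (only non-identical vanishing of the archimedean ratios, as meromorphic functions, is used there), but the proof of Corollary 4.7 invokes the lemma in the stronger form ``for any fixed point $s\in\BC$ the datum can be chosen with $\Psi\neq 0$ at that point'', in particular at points with $1/2<\Re(s)\le 1$ where poles of $L^S$ are being controlled; your data do not deliver this for small $\Re(s)$. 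The repair is exactly the paper's extra ingredient: replace the arbitrary $W$ by one whose torus restriction is compactly supported in $F^\times$ (the big-cell principal-series section in the archimedean case), after which your own localization argument runs at every $s$ with no convergence issue.
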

\begin{proof}
We note that the Bruhat cell $\Omega=N(F)TwN(F)$ is open dense in
$\SL_2(F)$. Thus the above integral is reduced to
\[
\Psi(W,\phi,f_s)=\int_{TN(F)}W(\omega_2t(a)n(u)
)(\omega_{\psi^{-1}}(wt(a)n(u))\phi)(1)f_s(wt(a)n(u))\Delta(a)dadu
\]
where $\Delta(a)$ is certain Jacobian.

Use the formulas for the Weil representation $\omega_{\psi^{-1}}$,
we find
\[
(\omega_{\psi^{-1}}(\omega_2t(a)n(u))\phi)(x)=|a|^{1/2}\frac{\gamma(\psi^{-1})}
{\gamma(\psi^{-1}_a)}\int_F
\psi(ua^2y^2)\phi(ay)\psi(2xy)dy=|a|^{1/2}
\frac{\gamma(\psi^{-1})}{\gamma(\psi^{-1}_a)}\hat{\Phi}_{a,u}(x)
\]
where $\Phi_{a,u}(x)=\psi(ua^2x^2)\phi(ax)$ which is again an
Schwartz function on
$F$ and depends continuously on $a,u$.  \\

Note that $\Omega=N(F)TwN(F)$ is open in $\widetilde{\SL}_2(F)$. Now
define $f_s\in I(s,\eta,\psi^{-1})$ on the set $\{(g,1):g\in
\SL_2(F)\}$ by
\[
f_s(g)=\begin{cases} \delta(b)^{1/2}(\eta_{s-1/2}\mu_{\psi^{-1}})(b)
f_2(u)  &\text{if} \ \ g=bw n(u)\in\Omega \\ 0& \text{otherwise}
\end{cases}
\]
where $b\in B(F)=TN(F),  u\in F$, and $f_2$ is compactly supported
to be determined later. Then we extend the definition of $f_2$ to
the set $\{(g,-1):g\in \SL_2(F) \}$ to make it genuine, i.e.,
$f_s(g, -1)=-1f_s(g,1)$.

Then the integral $\Psi$ can be reduced further to
\[
\Psi(W,\phi,f_s)=\int_{TN(F)}W(wau
)|a|^{1/2}\frac{\gamma(\psi^{-1})}{\gamma(\psi^{-1}_a)}
\hat{\Phi}_{a,u}(1) \delta(a)^{1/2}(\eta_{s-1/2}\mu_{\psi^{-1}})(a)
f_2(u)\Delta(a)dadu  \hspace{1cm} \cdots (*)
\]

Case 1: $F$ is $p$-adic.   \\

Consider Howe vector $W_{v_m}$. By Corollary 3.4, taking $m$ large
enough, $W_{v_m}$ can have arbitrarily small compact open support
around $1$ when restricted to $T$. Then $W_{w.v_m}(t(a^{-1})w)$ has
small compact open support
around $a=1$.  \\

First choose $\phi$ so that $\hat{\Phi}_{a,u}(1)\neq 0$ when
$a=1,u=0$. Then choose $m$ so that
$W_{\omega_2.v_m}(wt(a))=W_{w.v_m}(t(a^{-1})w)$
has small compact support around $1$ and all other datum involving
$a$ in the integral (*) are nonzero constants. For this
$W_{w.v_m}$, consider $W_{w.v_m}(\omega_2 t(a) u)$
with $u\in N$. When $u$ is close to $1$ enough, we have
$W_{w.v_m}(w t(a) u)=W_{w.v_m}(wt(a))$
for all $a$ in that small compact support around $1$. Then take
$f_2$ with support $u$ close to $1$ satisfying the above. With these
choices of $W_{w.v_m}(g),
f_2, \phi$, the integral $(*) $ is nonzero.   \\

Case 2: $F$ is archimedean.   \\

We will concentrate on the case $F=\BR$. The case $F=\BC$ is similar
as we have the same formulas for Weil representations by Proposition
1.3 in \cite{JL}. We begin with the formulas
\[
\Psi(W,\phi,f_s)=\int_{TN(F)}W(\omega_2au
)|a|^{1/2}\frac{\gamma(\psi^{-1})}{\gamma(\psi^{-1}_a)}
\hat{\Phi}_{a,u}(1) \delta(a)^{1/2}(\eta_{s-1/2}\mu_{\psi^{-1}})(a)
f_2(u)\Delta(a)dadu  \hspace{0.5cm} \cdots (**)
\]
where $\Phi_{a,u}(x)=\psi(ua^2x^2)\phi(ax)$ is again a Schwartz
function as $\phi$ is, and it depends on $a,u$ continuously. Since
the Fourier transform is again an isometry of Schwartz function
space, we can choose $\phi$ so that the Fourier transform
$\hat{\Phi}_{a,u}(1)> 0$
when $a=1,u=0$ and depends on $a,u$ continuously.  \\

Now let $(\pi, V)$ be an irreducible generic smooth representation
of $\SL_2(\BR)$ of moderate growth. Realize $\pi$ as a quotient of
principal series smooth $I(\chi, s)$, i.e.,
\[
0\to V'\to I(\chi,s)\to V\to 0
\]
Let $\lambda:V\to \BC$ be the unique nonzero continuous Whittaker
functional on $V$, then the composition
\[\Lambda:
 I(\chi,s)\to V\xrightarrow{\lambda} \BC
\]
gives the unique nonzero continuous Whittaker functional on
$I(\chi,s)$ up to a scalar. It follows that the two spaces $\{
\lambda(\pi(g)v):g\in \SL_2(F), v\in V  \}$ and $\{\Lambda(R(g).f):
g\in \SL_2(F), f\in I(\chi,s) \}$ are the same, although the first
is the Whittaker model of $\pi$, while the later may not be a
Whittaker model of $I(\chi,s)$.

The Whittaker functional on $I(\chi,s)$ is given by the following
\[
\Lambda(f)=\int_{N(F)} f(\omega_2u)\psi^{-1}(u)du
\]
when $s$ is in some right half plane, and its continuation gives
Whittaker functional for all $I(\chi,s)$. Also when $f$ has support
inside $\Omega=N(F)T\omega_2N(F)$, the above integral always
converges for any $s$, and gives the Whittaker functional.

Now for such $f$, one compute that for $a=t(a)\in T$
\[
\Lambda(I(a).f)=\int_{N(F)} f(\omega_2u
a)\psi^{-1}(u)du=\chi'(a)\int_F f(\omega_2u)\psi^{-1}(a^2u)du
\]
\[
=\chi'(a)\int_F f_1(u)\psi^{-1}(a^2u)du=\chi'(a)\hat{f}_1(a^2)
\]
where $f_1$ is the restriction of $f$ to $\omega_2N$ which can be
chosen to be a Schwartz function, $\hat{f}_1$ is its Fourier
transform, and $\chi'$ is certain character. Again, as Fourier
transform gives an isometry of Schwartz functions, we can always
choose $f$ so that its Whittaker function $W_f(a)$ has arbitrary
small compact support around $1$.  By a right translation by
$\omega_2$, we shows that one can always choose $f$ so that
$W_{\omega_2.f}(a\omega_2)$ has small compact support around $1$.

In order to prove the proposition, note that we have chosen $\Phi$.
Let
\[
R(a,u)=|a|^{1/2}\frac{\gamma(\psi^{-1})}{\gamma(\psi^{-1}_a)}\hat{\Phi}_{a,u}(1)
\delta(a)^{1/2} (\eta_{s-1/2}\mu_{\psi^{-1}})(a) \Delta(a)
\]
Then $R(a,u)$ is a continuous function of $a,u$ and $R(1,0)\neq 0$.
This means that there exist neighborhoods $U_1$ of $a=1$ and $U_2$
of $u=0$, such that $R(a,u)>R(1,0)/2>0$ for all $a\in U_1, u\in
U_2$.

Now choose $f$ so that $W_{\omega_2.f}(a\omega_2)$ has small compact
support in a neighborhood $V_1$ of $1$ with $V_1\subset U_1$, and
$W_{\omega_2.f}(\omega_2)>0$ . For this Whittaker function, since
$W_{\omega_2.f}(a\omega_2)u$ is continuous on $u$, we can choose
$f_2$ so that it is positively supported in a neighborhood $V_2$ of
$0$ such that  \\
 (1). $V_2\subset U_2$; \\
 (2). $W_{\omega_2.f}(a\omega_2u)>W_{\omega_2.f}(\omega_2)/2>0$ for all $u\in V_2$. \\

 Then $(**)$ becomes
 \[
 \int W_{\omega_2.f}(a\omega_2u)R(a,u)f_2(u)dadu> \frac{W_{\omega_2.f}(\omega_2)}{2}
 \frac{R(1,0)}{2}\int_{V_1}\int_{V_2} f_2(u)dadu>0
 \]
which proves the non vanishing.

\end{proof}

\end{document}